\crefname{hypothesis}{Hypothesis}{Hypotheses}
\newcommand{\ssubset}{\subset\joinrel\subset}
\title{Guaranteed a posteriori local error estimation for finite element solutions of boundary value problems\thanks{2021/12/15.
\funding{The second author is supported by Japan Society for the Promotion of Science: Fund for the Promotion of Joint International Research (Fostering Joint International Research (A)) 20KK0306, 
Grant-in-Aid for Scientific Research (B) 20H01820, 21H00998, and Grant-in-Aid for Scientific Research (C) 18K03411.}}}
\author{Taiga Nakano\thanks{Graduate school of Science and Technology, Niigata University, 8050 Ikarashi 2-no-cho, Nishi-ku, Niigata 950-2181, Japan  (\email{t-nakano@m.sc.niigata-u.ac.jp}).}
\and Xuefeng Liu \thanks{Faculty of Science, Niigata University, 8050 Ikarashi 2-no-cho, Nishi-ku, Niigata 950-2181, Japan  (\email{xfliu@math.sc.niigata-u.ac.jp}) (corresponding author).}
}
\newcommand*{\addFileDependency}[1]{
  \typeout{(#1)}
  \@addtofilelist{#1}
  \IfFileExists{#1}{}{\typeout{No file #1.}}
}
\begin{document}
\maketitle

\begin{abstract}{
This paper considers the finite element solution of the boundary value problem of Poisson's equation and proposes a guaranteed {\em a posteriori} local error estimation based on the hypercircle method. 
Compared to the existing literature on qualitative error estimation, 
the proposed error estimation provides an explicit and sharp bound for the approximation error in the subdomain of interest, 
and its efficiency can be enhanced by further utilizing a non-uniform mesh. 
 Such a result is applicable to problems without $H^2$-regularity, since it only utilizes the first order derivative of the solution.  
The efficiency of the proposed method is demonstrated by numerical experiments for both convex and non-convex 2D domains  with uniform or non-uniform meshes.
}
\end{abstract}

\begin{AMS}
  65N15, 65N30
\end{AMS}

\begin{keywords}
  finite element methods; a posteriori error estimates; guaranteed local error estimation.
\end{keywords}
\section{Introduction}
\label{sec:1}
\medskip
This paper studies the finite element solution of the boundary value problem (BVP)
of Poisson's equation, and proposes an  {\em a posteriori} local error estimation method for finite element solution, based on the hypercircle method, i.e., the Prager--Synge theorem.

The motivation of this research originates from the error analysis of the four-probe method, which has been used for the resistivity measurement of semiconductors over the past century \cite{Miccoli-2015}. The image of the four-probe method is illustrated in Figure \ref{fig:four-probe}: four probes $A, B, C$, and $D$ are aligned on the surface of the sample; a constant current $I_{AD}$ is applied between $A$ and $D$ and the potential difference $V_{BC}$ between $B$ and $C$ is measured. 
The resistivity $\rho$ is then calculated by $\rho=F_c V_{BC}/I_{AD}$, where $F_c$ is the correction factor.
As an important quantity for high-precision measurement, $F_c$ is evaluated theoretically by considering the governing equation of the distribution of the potential. 
A well-used model for the potential distribution $u$ is described by the following boundary value problem of 
Poisson's equation (see, e.g., \cite{Miccoli-2015,Yamashita-1984}):
\begin{eqnarray}
   -\Delta u = 2 \rho \: I_{AD} \: (\delta (A;x) - \delta (D;x))  \mbox{ in } \Omega;\quad\frac{\partial u}{\partial \mathbf{n}}=0 \mbox{ on }\partial\Omega\:,
\label{eq:resist_model}
\end{eqnarray}
where $\delta(A;x)$ and $\delta(D;x)$ are Dirac's delta functions located at $A$ and $D$, respectively. 
Note that this model regards the current $I_{AD}$ as a point charge on the surface of the sample.
By setting $\rho \: I_{AD}=1$, the value of $F_c$ can be evaluated by 
$$
F_c = \frac{1}{u(B) - u(C)}\:.
$$
The calculation of $F_c$ only utilizes the potential $u$ at the probes $B$ and $C$, i.e., the local information of the solution around the probes.
To have a sharp estimation of 
the correction factor $F_c$, the local error around the probes is of interest and the local error estimation for the FEM approximation to $u$ is wanted. 
Note that the right-hand side of \eqref{eq:resist_model} does not belong to the $L^2$ space. 
In this study, instead of the equation \eqref{eq:resist_model}, a model problem $-\Delta u=f$ with $f\in L^2(\Omega)$ is considered.  The technique to solve equation \eqref{eq:resist_model} directly will be discussed in our subsequent papers.

\begin{figure}[h!]
\begin{center}
\includegraphics[width=4.0cm]{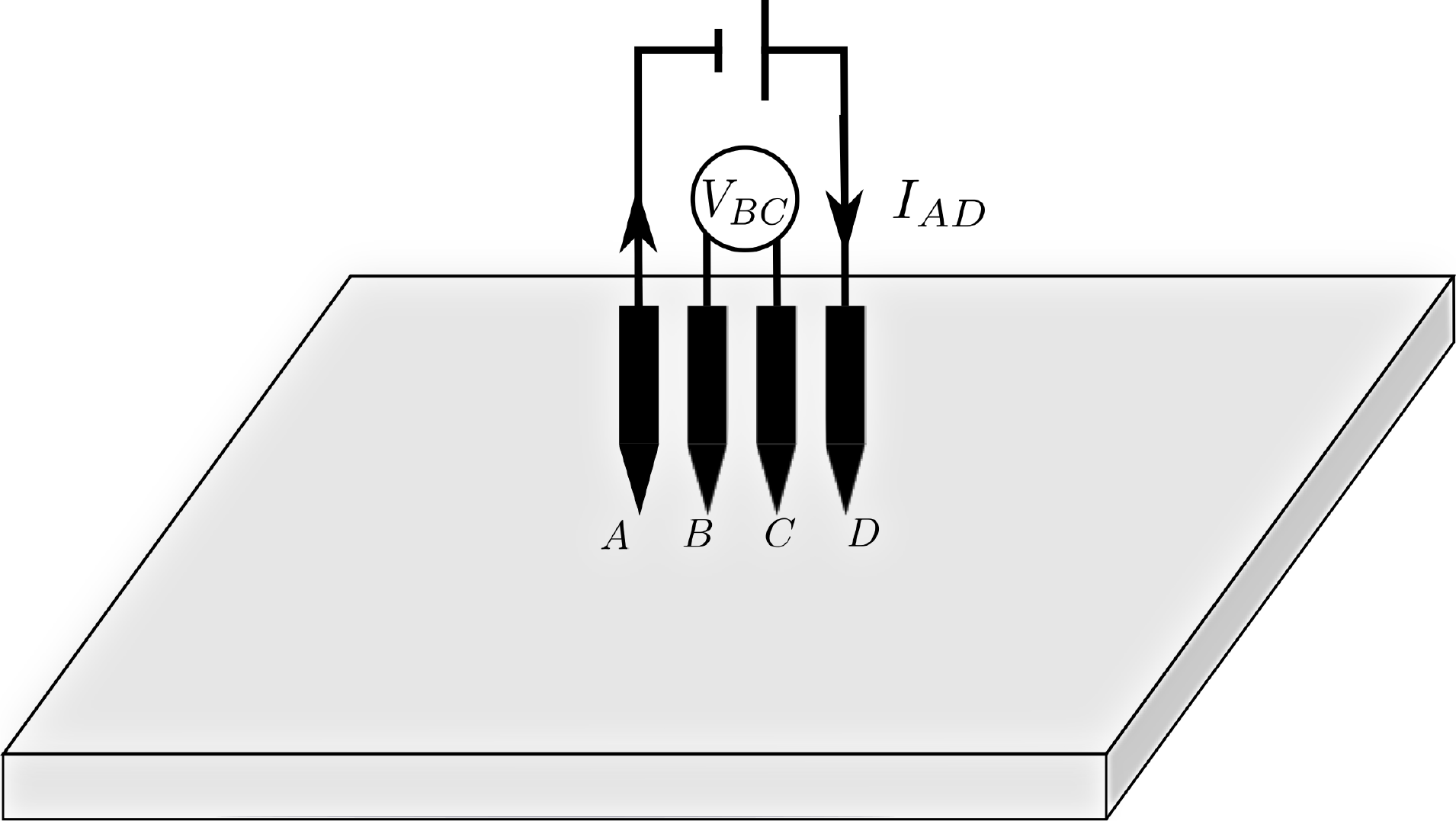} 
\caption{The four-probe method}
\label{fig:four-probe}
\end{center}
\end{figure}

There is some literature related on local error estimation for finite element solutions (see, e.g.,\cite{Nitsche-1974, Wahlbin-1991, Xu-2000, Liao-2003,Demlow-2011, Demlow-2010, Demlow-2016, Wahlbin-1995, Schatz-1988, Schatz-1995}).
The local error estimation was first studied by Nitche and Schatz in \cite{Nitsche-1974}. 
In the studies of \cite{Nitsche-1974,Wahlbin-1991}, for the subdomain $\Omega_0$ of interest, an intermediate subdomain $\Omega_1$ such that $\Omega_0 \ssubset \Omega_1 \ssubset \Omega$ is utilized to deduce the following error estimation: for $u \in H^l(\Omega)$,
$$
 \| u-u_h \|_{s,\Omega_0} \leq C(h^{l-s} \| u\|_{l,\Omega_1} + \|u-u_h\|_{-p,\Omega_1})\:, 
$$
for $s = 0$ or $1$ and  $p$ as a fixed integer. 
 In \cite{Nitsche-1974} , an estimation for quasi-uniform meshes was provided. In \cite{Xu-2000,Xu-2001}, based on the knowledge of the  local error distribution, Xu and Zhou purposed a parallel technique that uses coarse grid to approximate the low frequencies part of the residual error and then uses fine grid for the high frequency part. Discussions on relaxing the assumption in \cite{Nitsche-1974} applied to the mesh can be found in \cite{Demlow-2011, Demlow-2007}. In the field of adaptive finite element methods, error indicators based on the local error estimation have been well studied; see, e.g.,\cite{ Xu-2000, Xu-2001, Liao-2003, Demlow-2010, Demlow-2016}.

The above results on local error estimation mainly focus on the qualitative analysis (e.g., convergence rate) of local error terms,  while the explicit bound for the local error estimation is not available.

In this paper, we propose a quantitative error estimation method for the local error of the finite element solutions. 
Such a method is regarded as an extension of the explicit error estimation theorem developed by Liu  \cite{Liu-2013}, which inherits the idea of Kikuchi \cite{Kikuchi-2007-Rem.A.Post.Err.Est} to utilize the hypercircle method. The idea of local error estimation was also introduced in a concise manner in our previous work \cite{Nakano-2019} published in Japanese. Here, thorough discussions along with detailed numerical examples are provided to describe this newly developed local error estimation method. 

\medskip

The application of the hypercircle method to the {\em a posteriori} error estimation can also be found in  \cite{Braess-2007, Neittaanmaki-2004}.
Instead of developing $\mathbf{p}_h\in H(\mbox{div\:};\Omega)$ by processing the discontinuity of $\nabla u_h$ across the edges of elements \cite{Braess-2007, Neittaanmaki-2004}, the feature of Kikuchi's approach and our method is to construct the hypercircle by utilizing $\mathbf{p}_h \in H(\mbox{div\:};\Omega)$ such that $\mbox{div\:} {\mathbf{p}}_h +f_h=0$ holds exactly, where $f_h$ is the projection of $f$ to totally discontinuous piecewise polynomials.

The hypercircle method, namely the Prager--Synge theorem, was developed more than fifty years ago by \cite{Prager-1947} for elastic analysis. Around the same time \cite{Prager-1947}, based on the $T^*T$ theory of Kato \cite{Kato-1953}, Fujita developed a method similar to the hypercircle method \cite{Fujita-1955}, which has been applied to develop the point-wise estimation method for boundary value problems with specially constructed base functions. The extension of Kato--Fujita's approach to finite element method for local error estimation will be considered in our succeeding work.

\medskip

The local error estimation proposed in this paper has the following features.
\begin{itemize}
\item [(1)] As a quantitative result, it provides explicit estimation for the energy error in the subdomain of interest. 

\item [(2)] The method deals with domains of general shapes in the natural way and is applicable to non-convex domains where a singularity may appear around the re-entry corner of the boundary. 

\item[(3)] There are no constraints on the mesh generation of the domain, compared to the stringent requirement of a uniform mesh in past studies on local error estimation.
Also, the proposed local error estimator has an optimal convergence rate for the finite element solutions when non-uniform meshes are utilized to obtain finer solution approximation over the subdomain of interest.

\end{itemize}

\medskip

The rest of the paper is organized as follows:
In section \ref{sec:2}, we provide preliminary information on the problem setting and basic knowledge about the finite element method spaces.
In section \ref{sec:3}, the global error estimation based on the hypercircle method developed in \cite{Liu-2013} is introduced.
In section \ref{sec:4}, the details of the local error estimation are described. 
In section \ref{sec:5}, the qualitative analysis about the convergence rate of the proposed local error estimation is discussed.
In section \ref{sec:6}, the numerical examples for Poisson's equation over the square domain and the L-shaped domain are presented. 
Finally, in section \ref{sec:7}, we summarize the conclusions and discuss future studies.

\section{Preliminary}
\label{sec:2}
\subsection{Problem settings}
Throughout this study, the domain $\Omega$ is assumed to be a bounded polygonal domain of $\mathbb{R}^2$. Thus, $\Omega$ can be completely triangulated without any gap near the boundary. Standard symbols are used for the Sobolev spaces $H^m(\Omega)(m>0)$.
The norm of $L^2(\Omega)$ is written as $\| \cdot \|_{L^2(\Omega)}$ or $\| \cdot \|_{\Omega}$ .
Symbols $|\cdot|_{H^m(\Omega)},\| \cdot  \|_{H^m(\Omega)}$ denote semi-norm and norm of $H^m(\Omega)$, respectively.
Let $(\cdot,\cdot)$ be the inner product of $L^2(\Omega)$ or $(L^2(\Omega))^2$.
Sobolev space $W^{1,\infty}(\Omega)$ is a function space where weak derivatives up to the first order are essentially bounded on $\Omega$.
The standard vector valued function space $H(\mbox{div};\Omega)$ is defined as follows:
$$H(\mbox{div};\Omega):= \left \{ \mathbf{q} \in (L^2(\Omega))^2 ;~  \mbox{div } \mathbf{q} \in L^2(\Omega) \right \}.$$

In this paper, the finite element solution for the following model boundary value problem will be discussed:
\begin{eqnarray}
\label{eq:model}
   -\Delta u = f  \mbox{ in } \Omega ,\quad\displaystyle\frac{\partial u }{\partial\mathbf{n}} = g_N\mbox{ on } \Gamma_N,\quad u= g_D\mbox{ on } \Gamma_D.
\end{eqnarray}
Here, $\Gamma_N$ and $\Gamma_D$ are disjoint subsets of $\partial \Omega$ satisfying $\Gamma_N \cup \Gamma_D = \partial \Omega$; $\mathbf{n}$ is the unit outer normal direction on the boundary and $\frac{\partial }{\partial \mathbf{n}}$ is the directional derivative along $\mathbf{n}$ on $\partial \Omega$.

Let $S$ be a subdomain of $\Omega$ of interest. 
Suppose that $u_h$ is an approximate solution to the problem \eqref{eq:model}. The error of $(\nabla u - \nabla u_h)$ in the subdomain $S$ will be evaluated in this study.

\medskip

The weak form for the aforementioned problem is given by:
\begin{eqnarray}
\mbox{Find } u \in V \mbox{ s.t. } (\nabla u,\nabla v) = (f,v) + (g_N,v)_{\Gamma_N} 
   , \quad\forall v \in V_0 .\label{eq:model_weak}
\end{eqnarray}
  where
$$(g_N,v)_{\Gamma_N} := \int_{\Gamma_N} g_N v ~ds .$$
In case $\Gamma_D$ is not an empty set, the function space $V$ of the trial function and the function space $V_0$ of the test function are defined by
\begin{equation*}
  \label{eq:v0}
  V := \left \{v \in H^1(\Omega); v =g_D \mbox{ on } \Gamma_D \right \},~V_0 := \left \{v \in H^1(\Omega); v =0 \mbox{ on } \Gamma_D \right \} . 
\end{equation*}
For an empty $\Gamma_D$, the definition of $V$ and $V_0$ are modified as follows:
\begin{equation*}
  \label{eq:v0_new}
  V = V_0 = \left \{v \in H^1(\Omega); \int_\Omega v ~dx =0  \right \}.
\end{equation*}

\subsection{Finite element space setting}
To prepare for the discussion on the newly developed local error estimation in \S \ref{sec:4}, we review the standard FEM approaches to \eqref{eq:model}.
To simplify the discussion, assume $g_D,g_N$ in the boundary conditions of the model problem \eqref{eq:model} to be piecewise linear and piecewise constant at the boundary edges of $\mathcal{T}^h$, respectively.
Let $\mathcal{T}_h$ be a proper triangulation of the domain $\Omega$. Given an element $K \in \mathcal{T}_h$, let $h_K$ denote the length of longest edge of $K$.
The mesh size $h$ of $\mathcal{T}_h$ is defined as follows: 
$$h := \max_{K \in \mathcal{T}_h} h_K .$$
On each element $K \in \mathcal{T}_h$, the set of polynomials with degree up to $d$ is denoted by $P_d(K)$. 
Let $V_h,V_{h,0}$ denote the finite element spaces consisting of piecewise linear and continuous functions,  the boundary conditions of which follow the settings of $V$ and $V_0$, respectively.
The conforming finite element formulation of \eqref{eq:model_weak} is given by
\begin{eqnarray}
\mbox{Find } u_h \in V_h \mbox{ s.t. } (\nabla u_h,\nabla v_h) = (f,v_h) + (g_N,v_h)_{\Gamma_N}, \quad\forall v_h \in V_{h,0}. \label{eq:CF}
\end{eqnarray}

To provide the local error estimation for $(\nabla u - \nabla u_h)$ over the subdomain $S$, let us introduce the following finite element spaces.
\begin{itemize}
\item[(a)] Piecewise constant function space:
$$X_h := \left \{v_h \in L^2(\Omega) ~:~ v_h|_K \in P_0(K),~ \forall K \in \mathcal{T}_h  \right \} .$$
In case $\Gamma_D$ is empty, it is further required that $\int_\Omega v dx =0$ for $v_h \in X_h$.

\item[(b)] The Raviart--Thomas finite element space:
$$RT_h := \left \{\mathbf{p}_h \in H(\mbox{div};\Omega) ~:~ \mathbf{p}_h|_K =(a_K+c_Kx,b_K+c_Ky) \mbox{ for } K \in \mathcal{T}_h \right \} .$$
$$RT_{h,0} =\{ \mathbf{p}_h \in RT_h ~:~ \mathbf{p}_h \cdot \mathbf{n} =0 \mbox{ on } \Gamma_N \}\;.
    $$
    Here, $a_K,b_K,c_K \in P_0(K)$ for $K \in \mathcal{T}_h$.
\end{itemize}

  The standard mixed finite element formulation of \eqref{eq:model} reads:
Find $ (\mathbf{p}_h,\mu_h) \in RT_h \times X_h,~ \mathbf{p}_h \cdot \mathbf{n} = g_N \mbox{ on } \Gamma_N$, s.t.
\begin{equation}
     \label{eq:MF}
       (\mathbf{p}_h,\mathbf{q}_h)  +  (\mbox{div } \mathbf{q}_h,\mu_h)  +
       (\mbox{div } \mathbf{p}_h,\eta_h)   =    \int_{\Gamma_D} g_D (\mathbf{q}_h \cdot \mathbf{n}) ~ds  -(f,\eta_h)
\;,
\end{equation}
for $(\mathbf{q}_h ,\eta_h) \in  RT_{h,0} \times X_h$.

\medskip

Define the projection $\pi_{h}:L^2(\Omega) \to X_h $ such that for $f \in L^2(\Omega)$, 
\begin{equation*}
   (f-\pi_{h} f,\eta_h) =0, \quad \forall \eta_h \in X_h .
   \label{eq:projection}
\end{equation*}
The following error estimation holds for $\pi_h$,
\begin{equation}
   \| f- \pi_h f\|_{\Omega} \leq C_0 h |f|_{H^1(\Omega)}, \quad \forall f \in H^1(\Omega). \label{eq:inter_err}
\end{equation}  

To give a concrete value of $C_0$ in \eqref{eq:inter_err}, let us define $C_0(K)$ as a constant that depends on the shape of the triangle $K \in \mathcal{T}_h$ and satisfies 
$$   \| f- \pi_h f\|_{K} \leq C_0(K) |f|_{H^1(K)}, \quad \forall f \in H^1(K). $$
By using $C_0(K)$, the constant $C_0$ that depends on triangulation can be defined by 
\begin{equation}
 C_0:=\max_{K \in \mathcal{T}_h} \frac{C_0(K)}{h}. \label{eq:global-L2-inter-err}
\end{equation}
The previous studies \cite{Kikuchi-2006,Kikuchi-2007-Est.Intpol.Err.Const,Laugesen-2010} reported that the optimal value of $C_0(K)$ is given by $C_0(K):=h_K/j_{1,1} (\le 0.261 h_K) $ using positive minimum root $j_{1,1}\approx 3.83171$ of  the first kind Bessel's function $J_1$.

\section{Global {\em a priori} error estimation for the finite element solutions}
\label{sec:3}

In this section, we introduce the global error estimation developed in \cite{Liu-2013, PVNC-2018, Li-2018}, which will be used in Theorem \ref{theorem:local_est} for local error estimation.
We focus on the global {\em a priori} error estimation for problems with homogeneous boundary value conditions, which fits the needs in the proof for Theorem \ref{theorem:local_est}.
For global {\em a priori} error estimation of non-homogeneous boundary value problems, refer to \cite{Li-2018}.

\medskip

As a preparation for Theorem \ref{theorem:local_est}, let us consider the following boundary value problem.
\begin{eqnarray}
   -\Delta\phi = f  \mbox{ in } \Omega ,\quad\displaystyle\frac{\partial\phi }{\partial\mathbf{n}} = 0 \mbox{ on } \Gamma_N,\quad\phi= 0 \mbox{ on } \Gamma_D.
\label{eq:homo_model}
\end{eqnarray}
The weak formulation of \eqref{eq:homo_model} seeks $\phi \in V_0$, s.t., 
\begin{equation}
    (\nabla \phi,\nabla v) = (f,v),  \quad \forall v \in V_0 .\label{eq:homo_model_weak}
\end{equation}
  The Galerkin projection operator $P_h:V_{0} \to V_{h,0}$ satisfies,   for $ v \in V_{0}$
\begin{equation}
  \label{eq:galerkin}
   (\nabla(v-P_h v),\nabla v_h) =0 , \quad \forall v_h \in V_{h,0}\:.
\end{equation}  

  In \cite{Liu-2013}, the following quantity $\kappa_h$ is introduced for the purpose of {\em a priori} error estimation to the Galerkin projection $P_h \phi$:
$$ 
 \kappa_h := 
 \max_{f_h \in X_h} ~~ 
 \min_{ \substack{v_h \in V_{h,0} ,~ {\mathbf{q}}_h \in RT_{h,0}, \\   \text{div } {\mathbf{q}}_h + f_h =0} }  \frac{\| \nabla v_h -{\mathbf{q}}_h\|}{\|f_h\|} .
 $$

The theorem below provides an {\em a priori} error estimation using $\kappa_h$ and the Prager--Synge theorem.
\begin{theorem}[Global {\em a priori} error estimation \cite{Liu-2013}]
\label{theorem:global_est}
  Given $f \in L^2(\Omega)$, let $\phi$ be the solution to  \eqref{eq:homo_model_weak}.
  Then, the following error estimation holds.
\begin{eqnarray} 
&& |\phi - P_h \phi|_{H^1(\Omega)}   \leq   C(h) \|f\|_{\Omega}, \label{eq:grobal-H1}\\
&&\|\phi - P_h \phi\|_{\Omega}  \leq  C(h) |\phi - P_h  \phi|_{H^1(\Omega)} \label{eq:grobal-L2}\le  C(h)^2 \|f\|_{\Omega}\:,
\end{eqnarray}  
  where $C(h):=\sqrt{\kappa_h^2+(C_0h)^2}$; $C_0$ is the quantity defined in \eqref{eq:global-L2-inter-err}.

\end{theorem}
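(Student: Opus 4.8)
The plan is to apply the Prager--Synge (hypercircle) theorem to the homogeneous problem \eqref{eq:homo_model_weak}. Recall that for the exact solution $\phi$ of \eqref{eq:homo_model_weak}, which satisfies $-\Delta\phi = f$ with the mixed boundary conditions, any admissible pair consisting of a conforming function $v \in V_0$ and an equilibrated flux $\mathbf{q} \in H(\mathrm{div};\Omega)$ with $\mathrm{div}\,\mathbf{q} + f = 0$ and $\mathbf{q}\cdot\mathbf{n} = 0$ on $\Gamma_N$ yields the hypercircle identity
\begin{equation*}
 \|\nabla\phi - \nabla v\|_\Omega^2 + \|\nabla\phi - \mathbf{q}\|_\Omega^2 = \|\nabla v - \mathbf{q}\|_\Omega^2 .
\end{equation*}
In particular $|\phi - v|_{H^1(\Omega)} \le \|\nabla v - \mathbf{q}\|_\Omega$ for every such pair. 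First I would establish \eqref{eq:grobal-H1}: take $v = P_h\phi$, but note that the naive choice $v = v_h \in V_{h,0}$ in the definition of $\kappa_h$ need not be $P_h\phi$ itself. The key observation is that $P_h\phi$ is the best approximation of $\phi$ from $V_{h,0}$ in the $H^1$-seminorm, so $|\phi - P_h\phi|_{H^1(\Omega)} \le |\phi - v_h|_{H^1(\Omega)}$ for any $v_h \in V_{h,0}$; hence it suffices to bound $|\phi - v_h|_{H^1(\Omega)}$ for a cleverly chosen pair.

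The choice that makes $\kappa_h$ appear is to take $f_h = \pi_h f \in X_h$ and let $(v_h, \mathbf{q}_h) \in V_{h,0}\times RT_{h,0}$ be the minimizer in the definition of $\kappa_h$ associated to this $f_h$, so that $\mathrm{div}\,\mathbf{q}_h + \pi_h f = 0$ and $\|\nabla v_h - \mathbf{q}_h\|_\Omega \le \kappa_h\|\pi_h f\|_\Omega \le \kappa_h\|f\|_\Omega$. However $\mathbf{q}_h$ is equilibrated against $\pi_h f$, not against $f$, so the hypercircle identity does not apply directly; I would instead use the Pythagorean/perturbation argument from \cite{Liu-2013}: decompose the error using the orthogonality $(f - \pi_h f, w_h) = 0$ for $w_h \in X_h$ together with $\mathrm{div}\, RT_{h,0} \subseteq X_h$. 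Concretely, writing $\mathbf{q}_h$ as a perturbation of the true equilibrated flux and invoking the interpolation estimate $\|f - \pi_h f\|_\Omega \le C_0 h|f|_{H^1(\Omega)}$ — wait, more carefully, the standard device is that $\|\nabla\phi - \mathbf{q}_h\|$ splits into the $RT_{h,0}$-contribution controlled by $\kappa_h\|f\|$ and a data-oscillation term controlled by $C_0 h\|f\|$ via a duality/Poincaré-type bound on $\|\phi - \phi_h^{\mathrm{mixed}}\|$. Combining, $|\phi - P_h\phi|_{H^1(\Omega)}^2 \le \kappa_h^2\|f\|_\Omega^2 + (C_0 h)^2\|f\|_\Omega^2 = C(h)^2\|f\|_\Omega^2$, which is \eqref{eq:grobal-H1}.

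For \eqref{eq:grobal-L2} I would use the Aubin--Nitsche duality trick: let $\psi \in V_0$ solve $(\nabla\psi, \nabla v) = (\phi - P_h\phi, v)$ for all $v \in V_0$, so $\|\phi - P_h\phi\|_\Omega^2 = (\nabla\psi, \nabla(\phi - P_h\phi)) = (\nabla(\psi - P_h\psi), \nabla(\phi - P_h\phi))$ by Galerkin orthogonality \eqref{eq:galerkin}. Then $\|\phi - P_h\phi\|_\Omega^2 \le |\psi - P_h\psi|_{H^1(\Omega)}\,|\phi - P_h\phi|_{H^1(\Omega)}$, and applying the already-proven bound \eqref{eq:grobal-H1} to $\psi$ (whose right-hand side is $\phi - P_h\phi \in L^2(\Omega)$) gives $|\psi - P_h\psi|_{H^1(\Omega)} \le C(h)\|\phi - P_h\phi\|_\Omega$; dividing through yields $\|\phi - P_h\phi\|_\Omega \le C(h)|\phi - P_h\phi|_{H^1(\Omega)}$, and chaining with \eqref{eq:grobal-H1} gives the final $C(h)^2\|f\|_\Omega$ bound. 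The main obstacle is the first part: correctly handling the mismatch between $f$ and $\pi_h f$ in the equilibration constraint, i.e., showing rigorously that the data oscillation contributes exactly the $(C_0 h)^2$ term under the square root rather than a larger cross-term — this is where the orthogonality of $\pi_h$ against $X_h \supseteq \mathrm{div}\,RT_{h,0}$ and the optimality of $P_h\phi$ must be combined carefully, following \cite{Liu-2013}.
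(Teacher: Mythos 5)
The paper itself does not prove this theorem: it is imported wholesale from \cite{Liu-2013}, so there is no in-paper argument to compare against. Your second half (the Aubin--Nitsche duality step for \eqref{eq:grobal-L2}) is correct and complete: with $\psi$ solving the dual problem with datum $\phi-P_h\phi$, Galerkin orthogonality \eqref{eq:galerkin} and the already-established $H^1$ bound applied to $\psi$ give $\|\phi-P_h\phi\|_\Omega\le C(h)\,|\phi-P_h\phi|_{H^1(\Omega)}$, and chaining with \eqref{eq:grobal-H1} finishes. The problem is the first half, and it sits exactly where you yourself flag ``the main obstacle'': you assert that the $\kappa_h$ contribution and the data-oscillation contribution combine into $\kappa_h^2\|f\|_\Omega^2+(C_0h)^2\|f\|_\Omega^2$, but you never produce the mechanism, deferring to \cite{Liu-2013}. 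That mechanism is not a routine perturbation argument, and the route you sketch does not deliver it: if you apply the hypercircle for $\phi$ (datum $f$) with a flux $\mathbf{q}_h$ equilibrated against $\pi_h f$, the cross term does not vanish but equals $(f-\pi_h f,\,w-\pi_h w)$ with $w=v_h-\phi$, and absorbing it costs you a quadratic-inequality argument that yields a constant strictly larger than $\sqrt{\kappa_h^2+(C_0h)^2}$.

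The missing idea is to split the datum, not the flux. Introduce the auxiliary solution $\bar\phi\in V_0$ of $(\nabla\bar\phi,\nabla v)=(\pi_h f,v)$. The hypercircle applies \emph{exactly} to $\bar\phi$ (since $\operatorname{div}RT_{h,0}\supseteq$ the relevant part of $X_h$ allows exact equilibration against $\pi_h f$), and together with the best-approximation property of $P_h$ it gives $|\bar\phi-P_h\bar\phi|_{H^1(\Omega)}\le\kappa_h\|\pi_h f\|_\Omega$. The remainder satisfies $(\nabla(\phi-\bar\phi),\nabla v)=(f-\pi_h f,\,v-\pi_h v)$, whence $|\phi-\bar\phi|_{H^1(\Omega)}\le C_0h\|f-\pi_h f\|_\Omega$ by \eqref{eq:inter_err}; since $I-P_h$ is an $H^1$-seminorm orthogonal projection, the same bound holds for $|(\phi-\bar\phi)-P_h(\phi-\bar\phi)|_{H^1(\Omega)}$. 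The triangle inequality then gives $|\phi-P_h\phi|_{H^1(\Omega)}\le\kappa_h\|\pi_h f\|_\Omega+C_0h\|f-\pi_h f\|_\Omega$, and the decisive final step is the discrete Cauchy--Schwarz inequality combined with the Pythagoras identity $\|\pi_h f\|_\Omega^2+\|f-\pi_h f\|_\Omega^2=\|f\|_\Omega^2$, which is what produces $\sqrt{\kappa_h^2+(C_0h)^2}\,\|f\|_\Omega$ rather than $(\kappa_h+C_0h)\|f\|_\Omega$. Without this splitting-plus-Pythagoras step your proof establishes a weaker constant than the one claimed.
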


\begin{remark}
When the exact solution $\phi$ belongs $H^2(\Omega)$, the constant $C(h)$ appearing in \eqref{eq:grobal-H1}, \eqref{eq:grobal-L2} can be replaced for the error constant of the Lagrange interpolation.
For example, in \S \ref{subsec:square}, it is possible to use $C_h=0.493h$ as $C(h)$ for the right-angled triangle mesh.
\end{remark}
\begin{remark}
Calculation of $\kappa_h$: given $f_h \in X_h$, let $R_h:X_h \to V_h$, $T_h:X_h \to RT_h$ be the linear operators that map $f_h$ to the Lagrange FEM approximation of $\nabla \phi$ and the Raviart-Thomas FEM approximation of $\nabla  \phi$, respectively.
Then, $\kappa_h$ is characterized by the following maximum formulation:
$$ 
  \kappa_h = \max_{f_h \in X_h} \frac{\|(R_h - T_h) f_h\|_{\Omega} }{\|f_h\|_{\Omega}}\:,
  $$
which is determined by a matrix eigenvalue problem. See \cite{Liu-2013,Li-2018} for a more detailed discussion on the calculation of $\kappa_h$.
\label{rem:kappa}
\end{remark}

\section{Weighted hypercircle formula and main result}
\label{sec:4}
In this section, we propose {\em a posteriori } local error estimation for the finite element solutions.
Let $S(\subset \Omega)$ be the subdomain of interest. 
In \S \ref{subsec:weight_function}, the weighted inner product and weighted norm corresponding to $S$ will be introduced through a cutoff function $\alpha$.
In \S \ref{subsec:weighted_hypercircle}, we show the weighted hypercircle formula as an extension of Theorem \ref{theorem:PS}.  
The result of the local error estimation will be provided in \S \ref{subsec:main}.

\subsection{The weight function}
\label{subsec:weight_function}
 Let $\Omega'$ be the extended domain of $S$ with a band of width $\varepsilon$, that is, $\Omega':= \left \{ x \in \Omega ~;~ \mbox{dist}(x, S) < \varepsilon  \right \}$. Denote the band surrounding $S$ by $B_S$. Refer to Figure \ref{fig:alpha-graph}-(a),(b) for two examples of $S$ and $B_S$.
The weight function $\alpha \in W^{1,\infty}(\Omega)$ is defined as a piecewise polynomial with the following property.
\begin{equation*}
     \alpha(x,y)  =
    \left\{ 
    \begin{array}{l}      1 \quad (x,y) \in S \\      0 \quad (x,y) \in (\Omega')^c     \end{array} 
    \right. , \quad  0 \leq \alpha(x,y) \leq 1, \mbox{ for } (x,y) \in B_S\:.
\end{equation*}

\begin{figure}[h]
\begin{center}
\includegraphics[width=3.5cm]{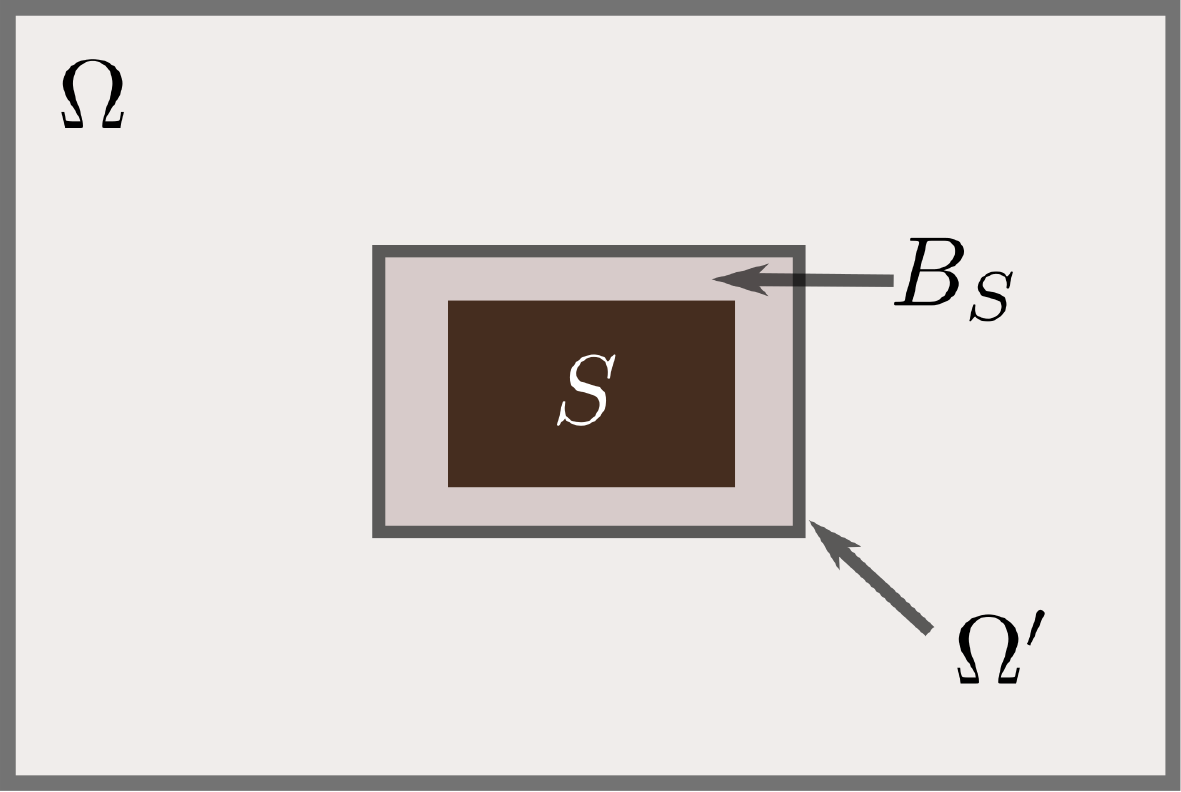} \quad
\includegraphics[width=3.5cm]{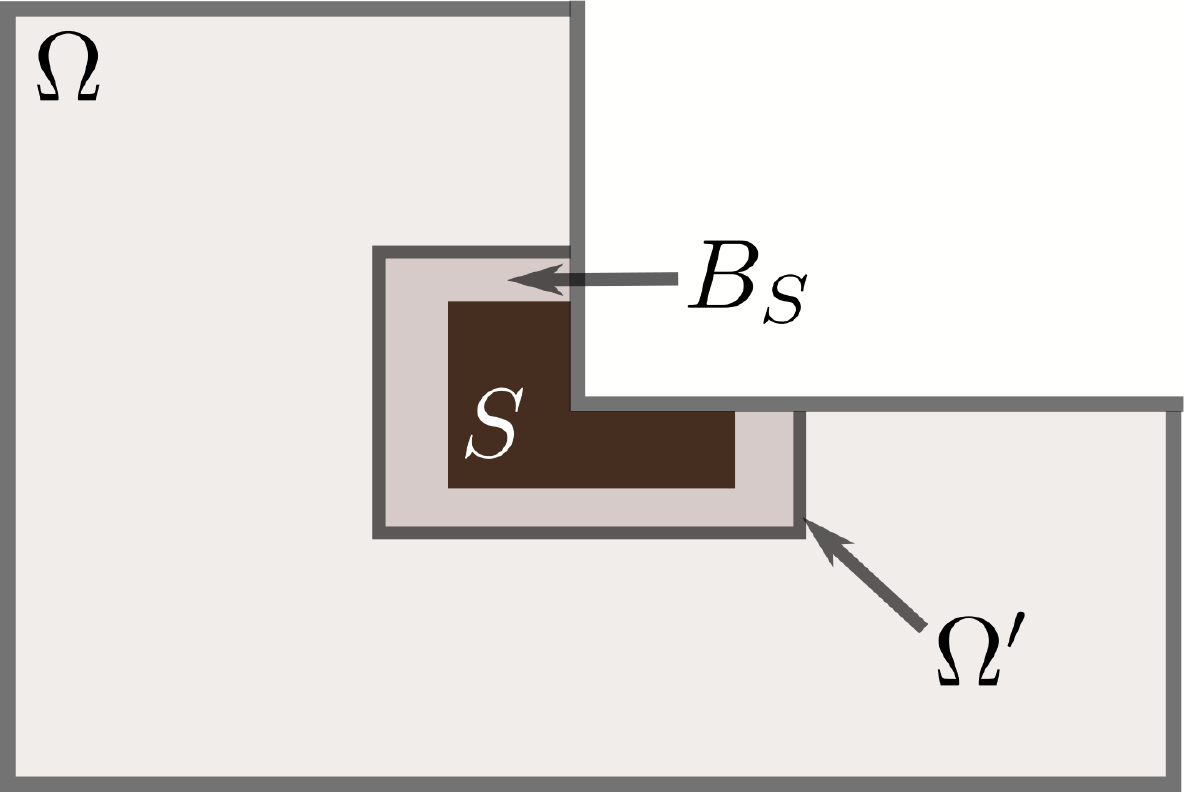} \quad
\includegraphics[width=4.2cm]{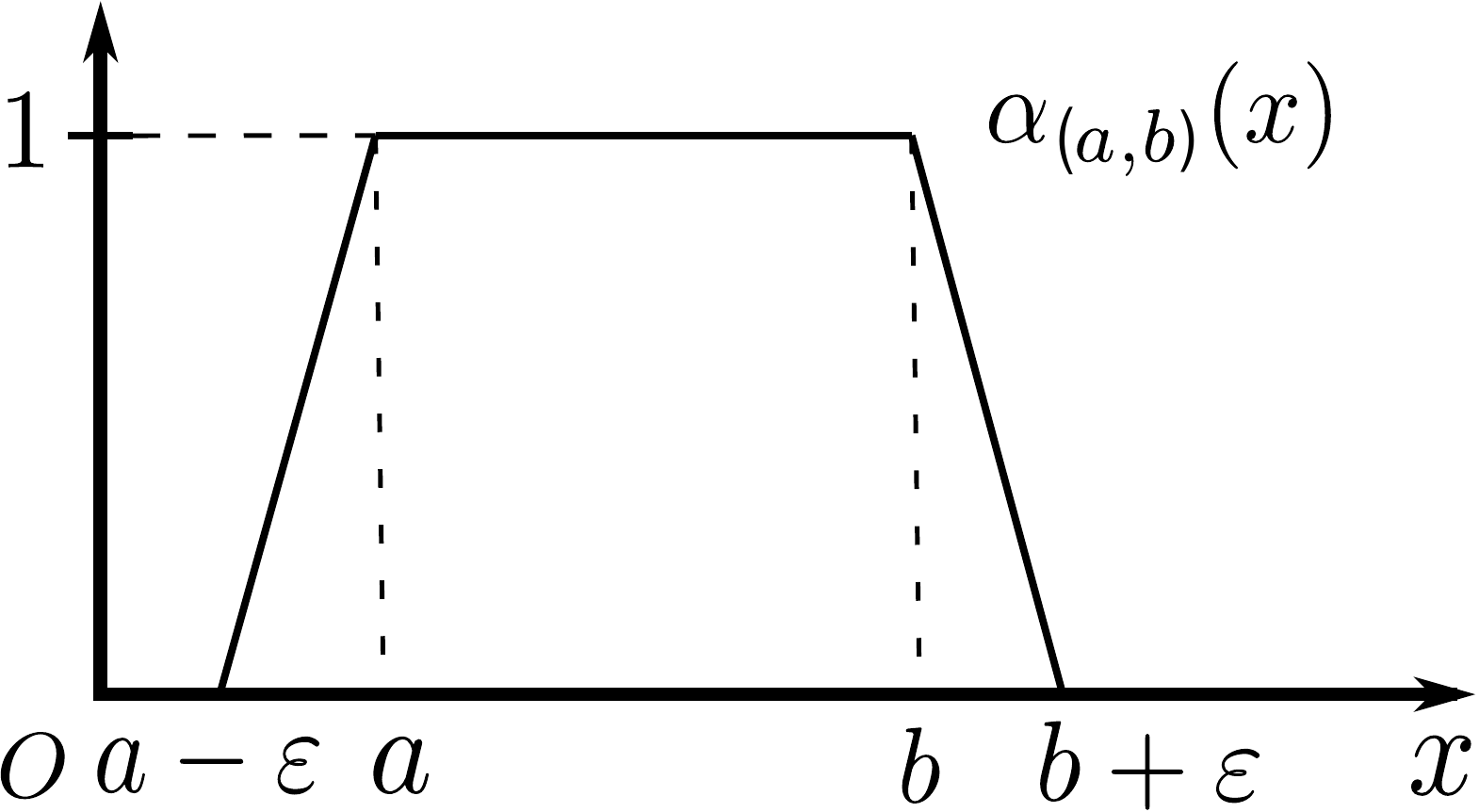}  \\
    {(a) Square domain.} \quad\quad\quad {(b) L-shaped domain.} \quad\quad  {(c) Graph of $\alpha_{(a,b)}(x).$} \quad
\caption{Definition of the weight function $\alpha$.}
\label{fig:alpha-graph}
\end{center}
\end{figure}

To construct a concrete weight function $\alpha(x,y)$, let us define $\alpha_{(a,b)}$ over interval $(a,b)$ as follows.
\begin{eqnarray*}
\alpha_{(a,b)}(x)  :=
\left\{ 
\begin{array}{ll}
   1+ (x-a)/\varepsilon\quad& x \in (a-\varepsilon,a]  \\
   1 \quad& x \in(a,b)  \\
   1- (x-b)/\varepsilon\quad& x \in [b,b+\varepsilon)  \\
   0 \quad&\mbox{otherwise}
\end{array} 
\right.\:.
\end{eqnarray*}
Refer to Figure \ref{fig:alpha-graph}-(c) for the graph of $\alpha_{(a,b)}$.
For $S$ being a rectangular subdomain constructed by the Cartesian product of two open intervals $(x_a, x_b)$, $(y_a, y_b)$, the weight function $\alpha$ can be defined by $\alpha(x,y) =\min{\{\alpha_{(x_a, x_b)}(x), \alpha_{(y_a,y_b)}(y)\} }$.

\medskip

The weighted inner product and norm are defined by using $\alpha$ as follows.
\begin{itemize} 
\item[(a)] Weighted inner product $(\cdot,\cdot)_{\alpha}$: ~ For $f,g \in L^2(\Omega)$ or $f,g \in (L^2(\Omega))^2$,
\begin{equation*}
  (f,g)_{\alpha} := \int_{\Omega'}  \alpha f \cdot g ~dx.
\end{equation*}

\item[(b)] Weighted norm $\| \cdot \|_{\alpha}:~$  For $f \in L^2(\Omega)$, 
\begin{equation*}
  \| f\|_{\alpha} : = \sqrt{(f,f)_{\alpha}}= \sqrt{\int_{\Omega'} f^2 \alpha ~dx} \quad (=\left \| f\sqrt{\alpha} \right \|_{\Omega}). 
\end{equation*}
\end{itemize}  
The following inequalities hold.
\begin{equation} \| f \|_S \leq \| f \|_{\alpha} \leq \| f \| _{\Omega'} \leq \| f \| _{\Omega}. \label{eq:norm_relation} \end{equation}

\subsection{Weighted hypercircle formula}
\label{subsec:weighted_hypercircle}
In this sub-section, a weighted hypercircle formula is proposed, which can be regarded as an extention to the classical Prager--Synge theorem below. 
\begin{theorem}[Prager--Synge's theorem\cite{Prager-1947}]\label{theorem:PS}
 Let $\phi$ be the solution of \eqref{eq:homo_model}.
 For any $v \in V_0$ and $\widetilde{\mathbf{p}} \in H(\mbox{\em div};\Omega)$ satisfying
$$
 \mbox{\em{div} }\widetilde{\mathbf{p}}+f=0 ,~ \widetilde{\mathbf{p}} \cdot \mathbf{n} = 0 \mbox{ on }  \Gamma_N \:, 
 $$
 we have, 
\begin{equation}
 \|\nabla \phi - \nabla v \|_{\Omega}^2 + \| \nabla \phi- \widetilde{\mathbf{p}}\|_{\Omega}^2 = \|\nabla v - \widetilde{\mathbf{p}} \|_{\Omega}^2. \label{eq:Hypercircle}
\end{equation}
\end{theorem}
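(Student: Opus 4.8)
The plan is to prove the Prager--Synge identity \eqref{eq:Hypercircle} by the standard Pythagorean-type argument, hinging on the orthogonality of $(\nabla\phi-\nabla v)$ and $(\nabla\phi-\widetilde{\mathbf{p}})$ in $(L^2(\Omega))^2$. First I would expand the right-hand side by inserting $\nabla\phi$:
\begin{equation*}
 \|\nabla v - \widetilde{\mathbf{p}}\|_\Omega^2 = \|(\nabla v - \nabla\phi) + (\nabla\phi - \widetilde{\mathbf{p}})\|_\Omega^2 = \|\nabla\phi - \nabla v\|_\Omega^2 + \|\nabla\phi - \widetilde{\mathbf{p}}\|_\Omega^2 + 2(\nabla\phi - \nabla v, \nabla\phi - \widetilde{\mathbf{p}})\:.
\end{equation*}
Thus the whole theorem reduces to showing that the cross term $(\nabla\phi - \nabla v, \nabla\phi - \widetilde{\mathbf{p}})$ vanishes.

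The next step is to verify this orthogonality. Set $w := \phi - v \in V_0$ (note $v\in V_0$ and $\phi\in V_0$, so the difference lies in $V_0$, hence vanishes on $\Gamma_D$). I would write
\begin{equation*}
 (\nabla\phi - \nabla v, \nabla\phi - \widetilde{\mathbf{p}}) = (\nabla w, \nabla\phi) - (\nabla w, \widetilde{\mathbf{p}})\:.
\end{equation*}
For the first term, the weak formulation \eqref{eq:homo_model_weak} with test function $w\in V_0$ gives $(\nabla w,\nabla\phi) = (f,w)$. For the second term, I would apply integration by parts (the divergence theorem) to $\widetilde{\mathbf{p}}\in H(\mathrm{div};\Omega)$ against $w\in H^1(\Omega)$:
\begin{equation*}
 (\nabla w, \widetilde{\mathbf{p}}) = -(\mathrm{div}\,\widetilde{\mathbf{p}}, w) + \int_{\partial\Omega} (\widetilde{\mathbf{p}}\cdot\mathbf{n})\, w\, ds\:.
\end{equation*}
The boundary integral splits over $\Gamma_N$ and $\Gamma_D$: on $\Gamma_N$ it vanishes because $\widetilde{\mathbf{p}}\cdot\mathbf{n}=0$ there, and on $\Gamma_D$ it vanishes because $w=0$ there. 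Using $\mathrm{div}\,\widetilde{\mathbf{p}} = -f$, the remaining volume term is $-(\mathrm{div}\,\widetilde{\mathbf{p}},w) = (f,w)$. Subtracting, $(\nabla w,\nabla\phi) - (\nabla w,\widetilde{\mathbf{p}}) = (f,w) - (f,w) = 0$, which closes the argument.

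The main technical point to be careful about is the validity of the integration-by-parts formula and the trace interpretation of $\widetilde{\mathbf{p}}\cdot\mathbf{n}$ on $\partial\Omega$: for $\widetilde{\mathbf{p}}\in H(\mathrm{div};\Omega)$ the normal trace is well-defined only in $H^{-1/2}(\partial\Omega)$, so the boundary pairing $\langle\widetilde{\mathbf{p}}\cdot\mathbf{n}, w\rangle$ should be read as the duality pairing with $w|_{\partial\Omega}\in H^{1/2}(\partial\Omega)$; the condition $\widetilde{\mathbf{p}}\cdot\mathbf{n}=0$ on $\Gamma_N$ likewise is meant in this weak sense, and $w$ vanishing on $\Gamma_D$ kills the complementary part of the pairing. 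This is a standard fact (see, e.g., the $H(\mathrm{div})$ Green's formula), and once it is invoked the remaining steps are purely algebraic. I do not anticipate any genuine obstacle; the only care needed is bookkeeping the boundary contributions on $\Gamma_N$ versus $\Gamma_D$.
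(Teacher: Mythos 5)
Your proof is correct and follows the standard orthogonality argument; the paper cites Theorem \ref{theorem:PS} from \cite{Prager-1947} without proof, but its proof of the weighted generalization (Theorem \ref{theorem:alpha-PS}) uses exactly the same expansion and the same treatment of the cross term via the weak form and Green's formula, the cross term simply vanishing identically in your unweighted case. (Minor slip: the cross term in your expansion should read $2(\nabla v-\nabla\phi,\,\nabla\phi-\widetilde{\mathbf{p}})$ rather than $2(\nabla\phi-\nabla v,\,\nabla\phi-\widetilde{\mathbf{p}})$, but since you show it vanishes the sign is immaterial.)
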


For weighted norms introduced in the previous section, we have the following extended formulation of the hypercircle \eqref{eq:Hypercircle}.
\begin{theorem}
\label{theorem:alpha-PS}
 Let $u$ be the solution of \eqref{eq:model}.
 For any $v \in V_0$ and $\mathbf{p} \in H(\mbox{\em div};\Omega)$ satisfying
$$ \mbox{\em div }\mathbf{p}+f=0 \mbox{ in } \Omega, \quad  \mathbf{p} \cdot \mathbf{n} = g_N \mbox{ on }  \Gamma_N .$$
Then,  
\begin{equation*}
     \|\nabla u - \nabla v \|_{\alpha}^2 + \| \nabla u- \mathbf{p}\|_{\alpha}^2 \leq \|\nabla v - \mathbf{p} \|_{\alpha}^2  + 2 \sqrt{2} \|\nabla \alpha\|_{L^{\infty}(\Omega)} \|u-v \|_{\Omega'} \|\nabla u -    \mathbf{p} \|_{\Omega'} .
\end{equation*}
\end{theorem}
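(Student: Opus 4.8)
The plan is to reduce the weighted inequality to the classical Prager--Synge orthogonality of Theorem~\ref{theorem:PS}, at the cost of a commutator term generated by the derivative of the weight $\alpha$. Abbreviate $w := u - v \in H^1(\Omega)$ and $\mathbf{e} := \nabla u - \mathbf{p}$, so that $\nabla v - \mathbf{p} = \mathbf{e} - \nabla w$. Expanding this in the weighted inner product and using $\|\nabla u - \nabla v\|_\alpha = \|\nabla w\|_\alpha$, $\|\nabla u - \mathbf{p}\|_\alpha = \|\mathbf{e}\|_\alpha$ yields the \emph{exact} identity
\begin{equation*}
\|\nabla u - \nabla v \|_{\alpha}^2 + \| \nabla u- \mathbf{p}\|_{\alpha}^2 = \|\nabla v - \mathbf{p} \|_{\alpha}^2 + 2\,(\nabla w,\mathbf{e})_{\alpha}\,,
\end{equation*}
so the whole statement reduces to estimating the cross term $(\nabla w,\mathbf{e})_\alpha$, which in the unweighted case ($\alpha\equiv 1$ on $\Omega$) vanishes identically by Theorem~\ref{theorem:PS}.

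The next step is to localize the weight. Since $\alpha \in W^{1,\infty}(\Omega)$, $w\in H^1(\Omega)$, and $\alpha$ vanishes on $(\Omega')^c$, the product $\psi := \alpha w$ lies in $H^1(\Omega)$, has support contained in $\overline{\Omega'}$, and satisfies the Leibniz rule $\alpha\nabla w = \nabla\psi - w\,\nabla\alpha$ a.e. Hence
\begin{equation*}
(\nabla w,\mathbf{e})_{\alpha} = \int_{\Omega'}\alpha\,\nabla w\cdot\mathbf{e}\,dx = \int_{\Omega}\nabla\psi\cdot\mathbf{e}\,dx - \int_{\Omega'}w\,\nabla\alpha\cdot\mathbf{e}\,dx\,,
\end{equation*}
the first integral being extended to all of $\Omega$ because $\nabla\psi=0$ a.e.\ on $\Omega\setminus\Omega'$. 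I would then show the first integral is zero: from \eqref{eq:model_weak} one has $(\nabla u,\nabla\psi) = (f,\psi)+(g_N,\psi)_{\Gamma_N}$, and integrating $\mathbf{p}$ by parts, using $\mathrm{div}\,\mathbf{p}+f=0$ in $\Omega$, $\mathbf{p}\cdot\mathbf{n}=g_N$ on $\Gamma_N$, and $\psi=0$ on $\Gamma_D$, gives $(\mathbf{p},\nabla\psi) = -(\mathrm{div}\,\mathbf{p},\psi)+\langle\mathbf{p}\cdot\mathbf{n},\psi\rangle_{\partial\Omega} = (f,\psi)+(g_N,\psi)_{\Gamma_N}$; subtracting, $\int_\Omega \nabla\psi\cdot\mathbf{e}\,dx = (\nabla u - \mathbf{p},\nabla\psi)=0$.

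For the remaining commutator integral, a pointwise bound $|\nabla\alpha\cdot\mathbf{e}|\le \sqrt{2}\,\|\nabla\alpha\|_{L^\infty(\Omega)}\,|\mathbf{e}|$ (the factor $\sqrt 2$ coming from the componentwise reading of $\|\nabla\alpha\|_{L^\infty(\Omega)}$ together with $|x_1|+|x_2|\le\sqrt 2\,|\mathbf{x}|$ on $\mathbb{R}^2$) followed by the Cauchy--Schwarz inequality over $\Omega'$ gives $|(\nabla w,\mathbf{e})_\alpha| \le \sqrt 2\,\|\nabla\alpha\|_{L^\infty(\Omega)}\,\|w\|_{\Omega'}\,\|\mathbf{e}\|_{\Omega'}$, and substituting this into the identity above closes the proof. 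The one point that needs genuine care, rather than routine calculation, is the vanishing of $\int_\Omega \nabla\psi\cdot\mathbf{e}\,dx$: one must check that the zero-extension of $\psi=\alpha w$ is an admissible test function (namely $\psi\in H^1(\Omega)$ with $\psi=0$ on $\Gamma_D$ — which relies on $\alpha=0$ on $(\Omega')^c$ and, tacitly, on $\overline{\Omega'}$ being separated from $\Gamma_D$ or on $g_D=0$), and that, since $\mathbf{p}$ is only in $H(\mathrm{div};\Omega)$, the boundary term $\langle\mathbf{p}\cdot\mathbf{n},\psi\rangle_{\partial\Omega}$ is read as the $H^{-1/2}$--$H^{1/2}$ duality pairing collapsing to the $\Gamma_N$ integral; in the pure Neumann case ($\Gamma_D=\emptyset$) one additionally subtracts the mean of $\psi$ before invoking \eqref{eq:model_weak}.
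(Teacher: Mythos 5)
Your proof is correct and follows essentially the same route as the paper's: expand the weighted norm to isolate the cross term, test the weak form with $\alpha w$ (equivalently, use the Leibniz rule $\alpha\nabla w=\nabla(\alpha w)-w\nabla\alpha$), integrate $\mathbf{p}$ by parts using $\mathrm{div}\,\mathbf{p}+f=0$ and $\mathbf{p}\cdot\mathbf{n}=g_N$, and bound the leftover commutator $\int_{\Omega'}w\,\nabla\alpha\cdot(\nabla u-\mathbf{p})\,dx$ by H\"older with the factor $\sqrt{2}$. Your explicit attention to the admissibility of $\alpha w$ as a test function (its trace on $\Gamma_D$ and the mean-zero adjustment in the pure Neumann case) is a point the paper passes over silently, and is a worthwhile refinement rather than a deviation.
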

\begin{proof}
 The expansion of $\|\nabla v - \mathbf{p} \|_{\alpha}^2=\|(\nabla v - \nabla u )+ (\nabla u- \mathbf{p}) \|_{\alpha}^2$ tells that
\begin{equation}
 \|\nabla v - \mathbf{p} \|_{\alpha}^2  =   \|\nabla v - \nabla u \|_{\alpha}^2 + \| \nabla u- \mathbf{p}\|_{\alpha}^2  + 2 (\nabla v - \nabla u ,\nabla u- \mathbf{p})_{\alpha}. 
 \label{eq:alpha-expansion}
\end{equation}
Let $w: = v- u$. 
Below, we show the estimation for the cross-term of \eqref{eq:alpha-expansion}, i.e., 
$(\nabla w, \nabla u - \mathbf{p})_{\alpha} $.

To deal with $(\nabla w ,\nabla u)_{\alpha} $, let us take the test function as $\alpha w$ in \eqref{eq:model_weak} and apply the chain rule to $\alpha w$, i.e.,
$
\nabla(\alpha w) = w \nabla \alpha +
\alpha \nabla w
 $.
 Then we have
\begin{eqnarray}
    (\nabla w, \nabla u)_{\alpha}    = -\int_{\Omega }  w \nabla\alpha\cdot\nabla u  ~dx +  \int_{\Omega} f \cdot (\alpha w)  ~dx  +\int_{\Gamma_N} g_N \cdot (\alpha w) ds.  \label{eq:alpha-uw}
\end{eqnarray}
 For $(\nabla w ,\mathbf{p})_{\alpha}$, Green's formula tells that
\begin{eqnarray}
 (\nabla w,\mathbf{p})_{\alpha} & = &\int_{\Omega} \nabla w \cdot (\alpha\mathbf{p})  ~dx = \int_{\Gamma_N} g_N\cdot (\alpha w) ds - \int_{\Omega} w \mbox{ div} (\alpha\mathbf{p}) ~dx\nonumber\\
&=&\int_{\Gamma_N} g_N\cdot (\alpha w) ds- \int_{\Omega} (\alpha w)  \mbox{ div } \mathbf{p} ~dx - \int_{\Omega} w  \nabla\alpha\cdot\mathbf{p} ~dx.  \nonumber\\
&=&  - \int_{\Omega} w \nabla\alpha\cdot\mathbf{p} ~dx + \int_{\Omega} (\alpha w) f ~dx +  \int_{\Gamma_N} g_N\cdot (\alpha w) ds.  
\label{eq:alpha-pw}
\end{eqnarray} 
 By taking \eqref{eq:alpha-uw}-\eqref{eq:alpha-pw} and noticing that $\alpha = 0$ on $\Omega \setminus \Omega'$, we have
\begin{equation*}
   (\nabla v - \nabla u ,\nabla u- \mathbf{p})_{\alpha} = - \int_{\Omega} (v-u) \nabla \alpha \cdot ( \nabla u- \mathbf{p} )~dx = - \int_{\Omega'} (v-u) \nabla \alpha \cdot ( \nabla u- \mathbf{p} )~dx. \label{eq:cross-term}
\end{equation*}
By applying H\"older's inequality, we have
\begin{equation}
 \label{eq:local_inequality_alpha}
   |(\nabla v - \nabla u ,\nabla u- \mathbf{p})_{\alpha}| \leq \sqrt{2} \| \nabla \alpha \|_{L^{\infty}(\Omega)} \| u-v\|_{\Omega'} \|\nabla u -\mathbf{p} \|_{\Omega'}.
\end{equation}
From \eqref{eq:alpha-expansion} and \eqref{eq:local_inequality_alpha}, we draw the conclusion.
\end{proof}

\begin{remark}
  Theorem \ref{theorem:alpha-PS} holds no matter $\partial S \cap \partial \Omega = \emptyset$ or not, as can be confirmed in the proof.
\end{remark}

\subsection{{\em A posteriori} local error estimation for finite element solutions}
\label{subsec:main}
 As a preparation to the argument of the main result, let us follow the idea of Kikuchi \cite{Kikuchi-2007-Rem.A.Post.Err.Est} to introduce auxiliary functions $\overline{u} \in V$ and  $ \overline{u}_h \in V_h$ as the solutions to the following equations.
\begin{eqnarray}
     (\nabla \overline{u},\nabla v )& = &(\pi_h f,v) + (g_N,v)_{\Gamma_N}, ~\forall v \in V_0\:; \label{eq:aux} \\
   (\nabla \overline{u}_h,\nabla v_h ) &=& (\pi_h f,v_h) + (g_N,v_h)_{\Gamma_N}, ~\forall v_h \in V_{h,0}\:. \label{eq:aux_CF}
\end{eqnarray}
Both the functions are introduced only for error analysis in a theoretical way, and the above equations do not need to be solved explicitly.
\begin{lemma}
\label{Lem:aux}
  For $\overline{u}$ of \eqref{eq:aux} and $\overline{u}_h$ of \eqref{eq:aux_CF}, the following estimations hold.
\begin{eqnarray} 
   |u-\overline{u}|_{H^1(\Omega)} \leq C_0h \|f-\pi_hf\|_{\Omega} ,\label{eq:second}\\
   |u_h - \overline{u}_h |_{H^1(\Omega)} \leq C_0 h \|f-\pi_h f\|_{\Omega}. \label{eq:third}
\end{eqnarray}
Here, $C_0$ is the projection error constant defined in \eqref{eq:global-L2-inter-err}.
\end{lemma}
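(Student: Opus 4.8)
The plan is to prove the two estimates in parallel, since they are structurally identical: both $u-\overline{u}$ and $u_h-\overline{u}_h$ satisfy a Galerkin-type orthogonality relation with right-hand side $f - \pi_h f$. First I would subtract the defining equations. Subtracting \eqref{eq:aux} from \eqref{eq:model_weak} gives
\[
(\nabla(u - \overline{u}), \nabla v) = (f - \pi_h f, v), \quad \forall v \in V_0,
\]
and similarly subtracting \eqref{eq:aux_CF} from \eqref{eq:CF} gives
\[
(\nabla(u_h - \overline{u}_h), \nabla v_h) = (f - \pi_h f, v_h), \quad \forall v_h \in V_{h,0}.
\]
So in both cases the difference $w \in V_0$ (resp. $V_{h,0}$) solves a variational problem whose load is exactly the projection residual $f - \pi_h f$.

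Next I would test with $v = w$ itself to obtain $|w|_{H^1(\Omega)}^2 = (f - \pi_h f, w)$. The key trick is to exploit the defining property of $\pi_h$: since $f - \pi_h f$ is $L^2$-orthogonal to $X_h$ (the piecewise constants), we may replace $w$ by $w - \pi_h w$ in the inner product, giving $|w|_{H^1(\Omega)}^2 = (f - \pi_h f, w - \pi_h w)$. Applying the Cauchy--Schwarz inequality and then the local-to-global interpolation estimate \eqref{eq:inter_err}, namely $\|w - \pi_h w\|_{\Omega} \le C_0 h \, |w|_{H^1(\Omega)}$, yields
\[
|w|_{H^1(\Omega)}^2 \le \|f - \pi_h f\|_{\Omega} \, \|w - \pi_h w\|_{\Omega} \le C_0 h \, \|f - \pi_h f\|_{\Omega} \, |w|_{H^1(\Omega)}.
\]
Dividing through by $|w|_{H^1(\Omega)}$ gives $|w|_{H^1(\Omega)} \le C_0 h \, \|f - \pi_h f\|_{\Omega}$, which is \eqref{eq:second} for the continuous case and \eqref{eq:third} for the discrete case. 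One minor point to note is that in the pure-Neumann case the spaces $V_0$, $V_{h,0}$ and $X_h$ all carry a zero-mean constraint, so $\pi_h w$ inherits zero mean and remains an admissible test element; I would mention this to ensure the replacement $w \mapsto w - \pi_h w$ is legitimate.

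I do not expect a genuine obstacle here; the argument is a standard Aubin--Nitsche-flavored duality/orthogonality computation. The only place requiring care is making sure the interpolation estimate \eqref{eq:inter_err} is applied with its triangulation-dependent constant $C_0$ as defined in \eqref{eq:global-L2-inter-err} — this is exactly the constant appearing in the statement, so the constants match cleanly. For the discrete estimate \eqref{eq:third} one should observe that $\pi_h w_h$ need not lie in $V_{h,0}$, but that is irrelevant: $\pi_h$ is used only inside the $L^2$ inner product via the orthogonality of $f - \pi_h f$ against $X_h \ni \pi_h w_h$, not as a test function in the weak form. Hence the same two-line estimate goes through verbatim.
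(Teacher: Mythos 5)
Your proposal is correct and follows essentially the same route as the paper: subtract the two variational problems, insert $v-\pi_h v$ using the $L^2$-orthogonality of $f-\pi_h f$ to $X_h$, test with the error itself, and apply Cauchy--Schwarz with the projection estimate \eqref{eq:inter_err}. The additional remarks on the zero-mean Neumann case and on $\pi_h w_h$ not needing to lie in $V_{h,0}$ are valid clarifications but do not change the argument.
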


\begin{proof}
According to the definitions of $u$ and $\overline{u}$,
$$
(\nabla(u-\overline{u}),\nabla v) = (f-\pi_h f,v) = (f-\pi_h f,v-\pi_h v), \quad v \in V_0.
$$
By taking $v = u - \overline{u}$ in the above equation and using the error estimation of projection $\pi_h$ in \eqref{eq:inter_err}, we have
$$|u-\overline{u}|_{H^1(\Omega)} ^2 = |(\nabla(u-\overline{u}),\nabla (u-\overline{u}))| \leq \|f-\pi_hf \|_{\Omega} \cdot C_0h |u-\overline{u}|_{H^1(\Omega)}.$$
  Hence,
$$ |u-\overline{u}|_{H^1(\Omega)} \leq C_0h \|f-\pi_hf \|_{\Omega} .$$
Estimation \eqref{eq:third} is obtained in the same way.
\end{proof}

  With the selected $\pi_h f \in X_h$, the property $\mbox{div}(RT_h) = X_h$ makes it possible to find $\mathbf{p}_h \in RT_h$ such that
\begin{equation}
  \label{eq:cond-ph}
  \mbox{div }\mathbf{p}_h+\pi_h f=0,~ \mathbf{p}_h \cdot \mathbf{n} = g_N \mbox{ on } \Gamma_N\:.
\end{equation}
  In this case, the following hypercircle equation holds: 
\begin{equation}
      \label{eq:fem-hypercircle}
      \|\nabla \overline{u} - \nabla v_h \|^2_{\Omega} +
      \|\nabla \overline{u} - \mathbf{p}_h\|^2_{\Omega} =\|\nabla v_h - \mathbf{p}_h \|^2_{\Omega}\:.
\end{equation}

\begin{remark} The estimation \eqref{eq:second} along with the hypercircle \eqref{eq:fem-hypercircle} leads to an {\em a posteriori } estimation of the global error.
\begin{equation}
      \label{eq:global-error-estimation}
      \|\nabla (u-u_h)\|_\Omega \le \|\nabla u_h - \mathbf{p}_h \|_\Omega  + C_0h\|f-\pi_h f\|_\Omega\:.
\end{equation}
  Here, $\mathbf{p}_h$ can be chosen freely to approximate $\nabla u$ under the condition \eqref{eq:cond-ph}.

\end{remark}

  Below, we apply Theorem \ref{theorem:alpha-PS} to the current function settings.

\begin{lemma}
\label{Lem:aux-hypercircle}
    Let $\overline{u}$ and $\overline{u}_h$ be the solutions of \eqref{eq:aux} and \eqref{eq:aux_CF}, respectively.
    For $\mathbf{p}_h \in RT_h$ satisfying \eqref{eq:cond-ph}, the following estimation holds,
$$
    \|\nabla (\overline{u} -  \overline{u}_h)\|_{\alpha}^2  \leq \|\nabla  \overline{u}_h - \mathbf{p}_h\|_\alpha^2 
    +2\sqrt{2} \|\nabla \alpha\|_{L^{\infty}(\Omega)} \|\overline{u}- \overline{u}_h\|_{\Omega} ~ \|\nabla \overline{u} - \mathbf{p}_h\|_{\Omega} .
  $$
\end{lemma}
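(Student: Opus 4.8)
The plan is to obtain the estimate as a direct specialization of the weighted hypercircle formula of Theorem~\ref{theorem:alpha-PS}, applied not to the original data but to the \emph{projected} source $\pi_h f \in L^2(\Omega)$. Concretely, I would let $\overline{u}$ — the weak solution of \eqref{eq:aux}, i.e. the solution of $-\Delta\overline{u}=\pi_h f$ together with the boundary conditions of \eqref{eq:model} — play the role of $u$ in Theorem~\ref{theorem:alpha-PS}, take $v:=\overline{u}_h\in V_h$, and take $\mathbf{p}:=\mathbf{p}_h\in RT_h$. Before invoking the theorem one has to check its hypotheses. The source $\pi_h f$ lies in $L^2(\Omega)$ and $\overline{u}$ is by construction its associated weak solution; $\overline{u}_h\in V_h$ carries the same Dirichlet trace $g_D$ on $\Gamma_D$ as $\overline{u}$ (here the standing assumption that $g_D$ is piecewise linear, so that $V_h\subset V$, is what is used), hence $\overline{u}_h-\overline{u}\in V_0$ and $\overline{u}_h$ is an admissible choice; finally $\mathbf{p}_h$ is admissible precisely because \eqref{eq:cond-ph} states $\mathrm{div}\,\mathbf{p}_h+\pi_h f=0$ in $\Omega$ and $\mathbf{p}_h\cdot\mathbf{n}=g_N$ on $\Gamma_N$. (The existence of such $\mathbf{p}_h$ rests on $\mathrm{div}(RT_h)=X_h$, and in the pure-Neumann case on the zero-mean compatibility of $\pi_h f$; both are already recorded around \eqref{eq:cond-ph}.)

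With these identifications, Theorem~\ref{theorem:alpha-PS} gives
\begin{equation*}
 \|\nabla\overline{u}-\nabla\overline{u}_h\|_{\alpha}^2+\|\nabla\overline{u}-\mathbf{p}_h\|_{\alpha}^2
 \le \|\nabla\overline{u}_h-\mathbf{p}_h\|_{\alpha}^2
 +2\sqrt{2}\,\|\nabla\alpha\|_{L^{\infty}(\Omega)}\,\|\overline{u}-\overline{u}_h\|_{\Omega'}\,\|\nabla\overline{u}-\mathbf{p}_h\|_{\Omega'}.
\end{equation*}
The remaining two steps are cosmetic: drop the nonnegative term $\|\nabla\overline{u}-\mathbf{p}_h\|_{\alpha}^2$ from the left-hand side, and enlarge the two factors $\|\overline{u}-\overline{u}_h\|_{\Omega'}$ and $\|\nabla\overline{u}-\mathbf{p}_h\|_{\Omega'}$ from $\Omega'$ to $\Omega$ using the norm chain \eqref{eq:norm_relation}. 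This produces exactly the asserted inequality.

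Since the argument reduces to a correct substitution into an already-proved identity, there is no real analytic difficulty; the only point demanding care — and the most likely source of a slip — is the admissibility bookkeeping: that $\overline{u}_h$ has the right boundary behaviour to serve as $v$ in Theorem~\ref{theorem:alpha-PS}, and, more importantly, that the divergence constraint in \eqref{eq:cond-ph} holds with the projected source $\pi_h f$ rather than with $f$ itself. It is precisely this \emph{exact} identity $\mathrm{div}\,\mathbf{p}_h+\pi_h f=0$ — not merely an approximate one — that forces us to feed $\overline{u}$ (and not $u$) into the weighted hypercircle and that keeps the cross term controllable; passing back from $\overline{u},\overline{u}_h$ to $u,u_h$ is then handled separately via Lemma~\ref{Lem:aux}.
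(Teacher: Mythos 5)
Your proof is correct and follows essentially the same route as the paper: substitute $f:=\pi_h f$, $u:=\overline{u}$, $v:=\overline{u}_h$, $\mathbf{p}:=\mathbf{p}_h$ into Theorem~\ref{theorem:alpha-PS}, drop the nonnegative term $\|\nabla\overline{u}-\mathbf{p}_h\|_{\alpha}^2$, and enlarge the $\Omega'$-norms to $\Omega$-norms. Your additional admissibility bookkeeping is a welcome clarification but does not change the argument.
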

\begin{proof}
   Take $f := \pi_h f ,~\mathbf{p} := \mathbf{p}_h,~ u :=\overline{u} $ and $ v := \overline{u}_h$ in Theorem \ref{theorem:alpha-PS}, then the following inequality holds.
$$
    \|\nabla (\overline{u} -  \overline{u}_h)\|_{\alpha}^2 + \|\nabla \overline{u} - \mathbf{p}_h \|_\alpha^2  \leq \|\nabla  \overline{u}_h - \mathbf{p}_h\|_\alpha^2 
    +2\sqrt{2} \|\nabla \alpha\|_{L^{\infty}(\Omega)} \|\overline{u}- \overline{u}_h\|_{\Omega'} ~ \|\nabla \overline{u} - \mathbf{p}_h\|_{\Omega' } .
  $$
Because $\Omega'\subset \Omega$, we conclude by replacing $\|\cdot\|_{\Omega'}$ with $\|\cdot\|_{\Omega}$.
\end{proof}

\medskip

  To state the results in Theorem \ref{theorem:local_est} and \ref{theorem:local_est2}, let us define the following four computable quantities $\overline{E}_1, ~\overline{E}_2, ~E_1$ and $E_2$.
\begin{eqnarray*}
&&\overline{E}_1 :=  \|\nabla\overline{u}_h -\mathbf{p}_h\|_{\alpha}, \quad
E_1 :=  \|\nabla u_h - \mathbf{p}_h\|_{\alpha} + C_0h\|f-\pi_h f\|_\Omega,\\
&&\overline{E}_2 := \left\{ 2\sqrt{2} C(h)\cdot\|\nabla\alpha\|_{L^{\infty}(\Omega)}  \right\} ^{1/2}\cdot\|\nabla\overline{u}_h - \mathbf{p}_h\|_{\Omega} , \\
&&E_2 := \left\{ 2\sqrt{2} C(h)\cdot\|\nabla \alpha\|_{L^{\infty}(\Omega)} \right\}^{1/2}  \cdot\|\nabla u_h - \mathbf{p}_h\|_{\Omega}\:. ~ \\
\end{eqnarray*}

\medskip

\begin{theorem}[{\em A posteriori} local error estimation for $\overline{u}$]
\label{theorem:local_est}
  Let $u$ and $\overline{u}_h$ be the solutions of \eqref{eq:model_weak}, \eqref{eq:aux_CF}, respectively.
  For $\mathbf{p}_h \in RT_h$ satisfying
$$ \mbox{\em{div} }\mathbf{p}_h+\pi_h f=0,~ \mathbf{p}_h \cdot \mathbf{n} = g_N \mbox{ on }  \Gamma_N ~, $$
the following local error estimation holds. 
\begin{equation}
    \label{eq:local_est}
    \|\nabla u - \nabla \overline{u}_h \|_{S}  \leq  \sqrt{\overline{E}_1^2+\overline{E}_2^2} + C_0h \|f -  \pi_h f\|_{\Omega}. 
\end{equation}
\end{theorem}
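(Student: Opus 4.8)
The plan is to combine the weighted hypercircle estimate of Lemma~\ref{Lem:aux-hypercircle} with the auxiliary-function bounds of Lemma~\ref{Lem:aux} and the global {\em a priori} estimate of Theorem~\ref{theorem:global_est}. First I would reduce the quantity of interest $\|\nabla u - \nabla\overline{u}_h\|_S$ to a weighted quantity: by the norm chain \eqref{eq:norm_relation} we have $\|\nabla u - \nabla\overline{u}_h\|_S \le \|\nabla u - \nabla\overline{u}_h\|_\alpha$, and then by the triangle inequality for the weighted norm, $\|\nabla u - \nabla\overline{u}_h\|_\alpha \le \|\nabla(u-\overline{u})\|_\alpha + \|\nabla(\overline{u}-\overline{u}_h)\|_\alpha \le \|\nabla(\overline{u}-\overline{u}_h)\|_\alpha + C_0 h\|f-\pi_h f\|_\Omega$, where the last step uses \eqref{eq:norm_relation} again together with \eqref{eq:second} of Lemma~\ref{Lem:aux}. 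This isolates the term $C_0 h\|f-\pi_h f\|_\Omega$ that appears in \eqref{eq:local_est}, so it remains to show $\|\nabla(\overline{u}-\overline{u}_h)\|_\alpha \le \sqrt{\overline{E}_1^2+\overline{E}_2^2}$.

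Next I would bound $\|\nabla(\overline{u}-\overline{u}_h)\|_\alpha^2$ using Lemma~\ref{Lem:aux-hypercircle}, which gives
$$
\|\nabla(\overline{u}-\overline{u}_h)\|_\alpha^2 \le \|\nabla\overline{u}_h-\mathbf{p}_h\|_\alpha^2 + 2\sqrt{2}\,\|\nabla\alpha\|_{L^\infty(\Omega)}\,\|\overline{u}-\overline{u}_h\|_\Omega\,\|\nabla\overline{u}-\mathbf{p}_h\|_\Omega\:.
$$
The first term on the right is exactly $\overline{E}_1^2$. For the second term I need to control $\|\overline{u}-\overline{u}_h\|_\Omega$ and $\|\nabla\overline{u}-\mathbf{p}_h\|_\Omega$ by computable quantities. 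Since $\overline{u}$ solves \eqref{eq:aux} with right-hand side $\pi_h f \in L^2(\Omega)$ and $\overline{u}_h = P_h\overline{u}$ is its Galerkin projection (after subtracting boundary data to reduce to the homogeneous setting of \S\ref{sec:3}), Theorem~\ref{theorem:global_est} yields $\|\overline{u}-\overline{u}_h\|_\Omega \le C(h)\,|\overline{u}-\overline{u}_h|_{H^1(\Omega)}$ via \eqref{eq:grobal-L2}, and $|\overline{u}-\overline{u}_h|_{H^1(\Omega)} \le \|\nabla\overline{u}_h - \mathbf{p}_h\|_\Omega$ by the hypercircle \eqref{eq:fem-hypercircle} (since $\|\nabla\overline{u}-\mathbf{p}_h\|_\Omega^2 \le \|\nabla v_h-\mathbf{p}_h\|_\Omega^2$ with $v_h=\overline{u}_h$ and the left side dominates $|\nabla\overline{u}-\nabla\overline{u}_h|^2$). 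The same hypercircle also gives $\|\nabla\overline{u}-\mathbf{p}_h\|_\Omega \le \|\nabla\overline{u}_h - \mathbf{p}_h\|_\Omega$. Substituting both bounds into the cross-term produces $2\sqrt{2}\,C(h)\,\|\nabla\alpha\|_{L^\infty(\Omega)}\,\|\nabla\overline{u}_h-\mathbf{p}_h\|_\Omega^2 = \overline{E}_2^2$, so $\|\nabla(\overline{u}-\overline{u}_h)\|_\alpha^2 \le \overline{E}_1^2 + \overline{E}_2^2$, completing the argument.

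The step I expect to be the main obstacle is the rigorous justification that Theorem~\ref{theorem:global_est} applies to the pair $(\overline{u},\overline{u}_h)$. That theorem is stated for the homogeneous problem \eqref{eq:homo_model_weak}, whereas $\overline{u},\overline{u}_h$ carry the boundary data $g_D$ on $\Gamma_D$ and $g_N$ on $\Gamma_N$; one must check that the {\em difference} $\overline{u}-\overline{u}_h$ — equivalently, $\phi - P_h\phi$ for $\phi$ the solution of the homogeneous problem with source $\pi_h f$ and test space $V_0$ — is unaffected by the lifting of boundary data, using linearity and the fact that $V_h$ reproduces piecewise-linear $g_D$ and $RT_h$ reproduces piecewise-constant $g_N$ exactly (the simplifying assumptions made in \S\ref{sec:2}). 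Once this reduction is in place, the remaining manipulations are the routine triangle-inequality and hypercircle bookkeeping sketched above. A secondary point worth stating carefully is that $\mathbf{p}_h$ is admissible in both Lemma~\ref{Lem:aux-hypercircle} and the hypercircle \eqref{eq:fem-hypercircle} precisely because of the constraint $\operatorname{div}\mathbf{p}_h + \pi_h f = 0$ with $\mathbf{p}_h\cdot\mathbf{n}=g_N$ on $\Gamma_N$, which is the hypothesis of the theorem.
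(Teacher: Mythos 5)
Your proposal is correct and follows essentially the same route as the paper: split off $\nabla(u-\overline{u})$ via Lemma~\ref{Lem:aux}, bound $\|\nabla(\overline{u}-\overline{u}_h)\|_\alpha$ by Lemma~\ref{Lem:aux-hypercircle}, and control the cross term with $\|\nabla\overline{u}-\mathbf{p}_h\|_\Omega\le\|\nabla\overline{u}_h-\mathbf{p}_h\|_\Omega$ from the hypercircle together with the $L^2$ bound $\|\overline{u}-\overline{u}_h\|_\Omega\le C(h)\|\nabla(\overline{u}-\overline{u}_h)\|_\Omega$. The only cosmetic difference is that you invoke \eqref{eq:grobal-L2} for that last bound while the paper re-runs the Aubin--Nitsche duality argument explicitly on $\overline{u}-\overline{u}_h$ (using \eqref{eq:grobal-H1} for the dual solution), which sidesteps the non-homogeneous boundary-data issue you correctly flag.
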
 

\begin{proof}
   With $\overline{u}$ defined in \eqref{eq:aux} and the triangle inequality, we obtain:
\begin{equation*}
  \| \nabla (u-\overline{u}_h) \|_S \leq \| \nabla (u-\overline{u}) \|_S + \| \nabla (\overline{u} - \overline{u}_h) \|_S \leq \| \nabla (u-\overline{u}) \|_{\Omega}+ \| \nabla (\overline{u} - \overline{u}_h) \|_{\alpha}. 
\end{equation*}
 By applying the estimation of $\|\nabla(u-\overline{u})\|_\Omega$ in Lemma \ref{Lem:aux} and the estimation of $\|\nabla(\overline{u}-\overline{u}_h)\|_\alpha$ in Lemma \ref{Lem:aux-hypercircle}, the following estimation holds.
\begin{eqnarray}
\|\nabla (u-\overline{u}_h) \|_S
&\leq&
\left\{ \|\nabla\overline{u}_h - \mathbf{p}_h\|_\alpha^2 +2 \sqrt{2} \|\nabla\alpha\|_{\infty} \|\overline{u}-\overline{u}_h\|_{\Omega} ~ \|\nabla\overline{u} - \mathbf{p}_h\|_{\Omega} \right\}^{\frac{1}{2}}  \notag\\
&& + C_0 h \| f - \pi_h f \|_{\Omega} \:   .
\label{eq:origin_main}
\end{eqnarray}
 Next, we give the estimation for $\|\overline{u}-\overline{u}_h\|_{\Omega},~\|\nabla \overline{u} - \mathbf{p}_h\|_{\Omega} $ in \eqref{eq:origin_main}.

\medskip

(a) From Theorem \ref{theorem:PS}, the hypercircle below is available for $\overline{u}$ defined in \eqref{eq:aux},
\begin{equation}
\label{eq:hypercircle-in-proof}    
\|\nabla \overline{u} - \nabla v_h \|_{\Omega}^2 + \| \nabla \overline{u}- \mathbf{p}_h\|_{\Omega}^2 = \|\nabla v_h - \mathbf{p}_h \|_{\Omega}^2, \quad  \forall v_h \in V_h \:.
\end{equation}
By taking $v_h:=\overline{u}_h$,
  we obtain the estimation of $\|\nabla \overline{u} - \mathbf{p}_h\|_{\Omega} $:
\begin{equation}
    \|\nabla\overline{u}-\mathbf{p}_h \|_{\Omega} \leq \|\nabla\overline{u}_h - \mathbf{p}_h \|_{\Omega}\:. \label{eq:up-est}
\end{equation}

(b) To give the estimation of $\|\overline{u}-\overline{u}_h\|_{\Omega}$, let us define the dual problem.
$$\mbox{Find } \phi \in V_0 \mbox{ s.t. } (\nabla \phi,\nabla v) = (\overline{u}-\overline{u}_h,v), \quad \forall v \in V_0. $$
    By applying $P_h$ defined in \eqref{eq:galerkin} along with the {\em a priori } estimation \eqref{eq:grobal-H1} in Theorem \ref{theorem:global_est}, we have,
\begin{eqnarray*}
\|\overline{u}-\overline{u}_h\|_{\Omega}^2 
&\leq&
\|\nabla (\phi-P_h\phi) \|_{\Omega} \cdot\|\nabla(\overline{u}-\overline{u}_h) \|_{\Omega} .
\\
&\le&  C(h)  \: \|\overline{u}-\overline{u}_h\|_{\Omega} \cdot \|\nabla(\overline{u}-\overline{u}_h) \|_{\Omega} 
. 
\end{eqnarray*}
Notice that \eqref{eq:hypercircle-in-proof} with $v_h:=\overline{u}_h$ implies 
\begin{equation*}
    \|\nabla\overline{u} - \nabla\overline{u}_h \|_{\Omega} \leq \|\nabla\overline{u}_h - \mathbf{p}_h \|_{\Omega}\:. \label{eq:overu-v}
\end{equation*}
Thus, we have the estimation of $ \| \overline{u}-\overline{u}_h \|_{\Omega}  $:
\begin{equation}
    \label{eq:u-bar-minus-u-h-bar}
    \| \overline{u}-\overline{u}_h \|_{\Omega}  \leq  
    C(h) \|\nabla (\overline{u} - \overline{u}_h) \| _{\Omega}
    \le 
    C(h) \|\nabla\overline{u}_h - \mathbf{p}_h \|_{\Omega}.
\end{equation}

  Apply \eqref{eq:up-est}  and \eqref{eq:u-bar-minus-u-h-bar} to the first term of the right-hand side of \eqref{eq:origin_main}, 
\begin{eqnarray*}
&\quad&\|\nabla\overline{u}_h - \mathbf{p}_h\|_\alpha^2 +2\sqrt{2} \:\|\nabla\alpha\|_{\infty} \|\overline{u}-\overline{u}_h\|_{\Omega} ~ \|\nabla\overline{u} - \mathbf{p}_h\|_{\Omega}  \\
&&\leq\|\nabla\overline{u}_h - \mathbf{p}_h\|_\alpha^2  +   2\sqrt{2} \:\|\nabla\alpha\|_{\infty} \:
   C(h)\: \|\nabla\overline{u}_h - \mathbf{p}_h\| _{\Omega} ^2  \\
&& = {\overline{E}_1^2 + \overline{E}_2^2}.
\end{eqnarray*}
Now, we draw the conclusion by sorting the estimation of \eqref{eq:origin_main}. 
\end{proof}

\begin{theorem}
\label{theorem:local_est2}
 Under the assumptions of Theorem \ref{theorem:local_est}, the following estimation holds.
\begin{equation} \|\nabla u - \nabla u_h \|_{S}   \leq \sqrt{E_1^2 + E_2^2} + 2 C_0h \|f -  \pi_h f\|_{\Omega} \:.  
  \label{eq:local_est2}
\end{equation}
\end{theorem}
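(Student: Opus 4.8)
The plan is to derive Theorem \ref{theorem:local_est2} from Theorem \ref{theorem:local_est} by the triangle inequality, replacing the auxiliary function $\overline{u}_h$ by the actual finite element solution $u_h$ and paying the price of the discrepancy $\|\nabla(\overline{u}_h - u_h)\|_S$, which is controlled globally by Lemma \ref{Lem:aux}. Concretely, first I would write
\begin{equation*}
  \|\nabla u - \nabla u_h\|_S \le \|\nabla u - \nabla \overline{u}_h\|_S + \|\nabla(\overline{u}_h - u_h)\|_S \le \|\nabla u - \nabla \overline{u}_h\|_S + \|\nabla(\overline{u}_h - u_h)\|_\Omega,
\end{equation*}
and then invoke \eqref{eq:third} of Lemma \ref{Lem:aux} to bound the last term by $C_0 h \|f - \pi_h f\|_\Omega$. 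Combined with the bound \eqref{eq:local_est} of Theorem \ref{theorem:local_est}, this already yields $\|\nabla u - \nabla u_h\|_S \le \sqrt{\overline{E}_1^2 + \overline{E}_2^2} + 2 C_0 h\|f - \pi_h f\|_\Omega$, so the remaining work is purely to replace $\overline{E}_1, \overline{E}_2$ (which involve $\overline{u}_h$) by the fully computable $E_1, E_2$ (which involve $u_h$).

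For that replacement I would again use $\|\nabla(\overline{u}_h - u_h)\|_\Omega \le C_0 h \|f - \pi_h f\|_\Omega$ together with the monotonicity $\|\cdot\|_\alpha \le \|\cdot\|_\Omega$ from \eqref{eq:norm_relation}. For the $\overline{E}_1$ term: by the triangle inequality in the weighted norm,
\begin{equation*}
  \overline{E}_1 = \|\nabla \overline{u}_h - \mathbf{p}_h\|_\alpha \le \|\nabla u_h - \mathbf{p}_h\|_\alpha + \|\nabla(\overline{u}_h - u_h)\|_\alpha \le \|\nabla u_h - \mathbf{p}_h\|_\alpha + C_0 h\|f-\pi_h f\|_\Omega = E_1.
\end{equation*}
For the $\overline{E}_2$ term: $\|\nabla \overline{u}_h - \mathbf{p}_h\|_\Omega \le \|\nabla u_h - \mathbf{p}_h\|_\Omega + C_0 h\|f - \pi_h f\|_\Omega$, so $\overline{E}_2 \le \{2\sqrt{2}\,C(h)\|\nabla\alpha\|_{L^\infty(\Omega)}\}^{1/2}(\|\nabla u_h - \mathbf{p}_h\|_\Omega + C_0 h\|f-\pi_h f\|_\Omega)$.

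The one subtlety — and the only place where genuine care is needed — is bookkeeping of the lower-order $C_0 h\|f - \pi_h f\|_\Omega$ terms: a naive application of the above inequalities to $\sqrt{\overline{E}_1^2 + \overline{E}_2^2}$ would produce three or more copies of $C_0 h\|f-\pi_h f\|_\Omega$ rather than the single extra copy claimed in \eqref{eq:local_est2}. I expect this to be the main obstacle, and the way to handle it is not to bound $\overline{E}_1$ and $\overline{E}_2$ separately but to retrace the proof of Theorem \ref{theorem:local_est} with $u_h$ in place of $\overline{u}_h$ from the start: use the triangle inequality $\|\nabla(u - u_h)\|_S \le \|\nabla(u-\overline{u})\|_\Omega + \|\nabla(\overline{u}-u_h)\|_\alpha$, where $\|\nabla(u-\overline{u})\|_\Omega \le C_0 h\|f-\pi_h f\|_\Omega$ by \eqref{eq:second}, and then apply Theorem \ref{theorem:alpha-PS} with $v := u_h$, $\mathbf{p} := \mathbf{p}_h$, $u := \overline{u}$. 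This gives
\begin{equation*}
  \|\nabla(\overline{u}-u_h)\|_\alpha^2 \le \|\nabla u_h - \mathbf{p}_h\|_\alpha^2 + 2\sqrt{2}\,\|\nabla\alpha\|_{L^\infty(\Omega)}\|\overline{u} - u_h\|_\Omega\,\|\nabla\overline{u} - \mathbf{p}_h\|_\Omega,
\end{equation*}
and one estimates $\|\overline{u} - u_h\|_\Omega$ and $\|\nabla\overline{u} - \mathbf{p}_h\|_\Omega$ exactly as in steps (a), (b) of the proof of Theorem \ref{theorem:local_est} but carrying the extra $C_0 h\|f-\pi_h f\|_\Omega$ coming from $\|\nabla(\overline{u}_h - u_h)\|_\Omega$ once, via $\|\nabla \overline{u} - \mathbf{p}_h\|_\Omega \le \|\nabla u_h - \mathbf{p}_h\|_\Omega + \|\nabla(\overline{u}_h - \overline{u})\|_\Omega$ and similarly for the $\overline{u}-\overline{u}_h$ step. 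Folding the single extra $C_0 h\|f-\pi_h f\|_\Omega$ out of the square root (using $\sqrt{a^2 + (b+c)^2} \le \sqrt{a^2+b^2} + c$) then yields exactly the claimed $\sqrt{E_1^2 + E_2^2} + 2C_0 h\|f-\pi_h f\|_\Omega$, the coefficient $2$ accounting for one copy from \eqref{eq:second} and one from the $\overline{u}_h$-to-$u_h$ substitution.
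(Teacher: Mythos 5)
Your overall structure matches the paper's: one triangle inequality $\|\nabla(u-u_h)\|_S\le\|\nabla(u_h-\overline{u}_h)\|_\Omega+\|\nabla(u-\overline{u}_h)\|_S$ with \eqref{eq:third} supplying one copy of $C_0h\|f-\pi_hf\|_\Omega$, then Theorem \ref{theorem:local_est}, then replacement of $\overline{E}_1,\overline{E}_2$ by $E_1,E_2$; and your treatment of $\overline{E}_1$ (triangle inequality in $\|\cdot\|_\alpha$, with the extra $C_0h\|f-\pi_hf\|_\Omega$ absorbed into the definition of $E_1$) is exactly the paper's. The gap is in the $\overline{E}_2$ step. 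The inequality $\overline{E}_2\le E_2$ holds \emph{exactly}, with no additive correction: since $\overline{u}_h$ is the Galerkin projection of $\overline{u}$ (both defined with the same datum $\pi_hf$), it is the best approximation to $\overline{u}$ in the energy seminorm, so the hypercircle \eqref{eq:fem-hypercircle} applied with $v_h:=\overline{u}_h$ and with $v_h:=u_h$ gives
\begin{equation*}
\|\nabla\overline{u}_h-\mathbf{p}_h\|_\Omega^2=\|\nabla(\overline{u}-\overline{u}_h)\|_\Omega^2+\|\nabla\overline{u}-\mathbf{p}_h\|_\Omega^2\le\|\nabla(\overline{u}-u_h)\|_\Omega^2+\|\nabla\overline{u}-\mathbf{p}_h\|_\Omega^2=\|\nabla u_h-\mathbf{p}_h\|_\Omega^2.
\end{equation*}
You instead bound $\|\nabla\overline{u}_h-\mathbf{p}_h\|_\Omega$ by a lossy triangle inequality, manufacture a spurious extra term, and are then driven to a workaround that the paper does not need.

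The workaround itself, as sketched, does not close. If you rerun the proof of Theorem \ref{theorem:local_est} with $v:=u_h$ in Theorem \ref{theorem:alpha-PS}, step (a) is fine (the hypercircle with $v_h:=u_h$ gives $\|\nabla\overline{u}-\mathbf{p}_h\|_\Omega\le\|\nabla u_h-\mathbf{p}_h\|_\Omega$ directly, with no $\|\nabla(\overline{u}_h-\overline{u})\|_\Omega$ correction), but step (b) is not: the Aubin--Nitsche estimate $\|\overline{u}-\overline{u}_h\|_\Omega\le C(h)\|\nabla(\overline{u}-\overline{u}_h)\|_\Omega$ uses the Galerkin orthogonality of $\overline{u}-\overline{u}_h$ against $V_{h,0}$, which fails for $\overline{u}-u_h$ because $u_h$ is the Galerkin approximation of $u$, not of $\overline{u}$; estimating $\|\overline{u}-u_h\|_\Omega$ therefore produces an additional term (involving $\|f-\pi_hf\|_\Omega$ and a Poincar\'e-type constant) inside the cross product $\|\overline{u}-u_h\|_\Omega\,\|\nabla\overline{u}-\mathbf{p}_h\|_\Omega$. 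Since that cross term is a product of two distinct factors rather than a square, your proposed folding $\sqrt{a^2+(b+c)^2}\le\sqrt{a^2+b^2}+c$ does not apply to it, and the surplus cannot be converted into the single extra $C_0h\|f-\pi_hf\|_\Omega$ of \eqref{eq:local_est2}. The missing idea is simply the best-approximation property of $\overline{u}_h$ combined with \eqref{eq:fem-hypercircle}; with it, the proof is the short one you started with and no retracing is required.
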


\begin{proof}
First, we apply the triangle inequality to $(u-\overline{u}_h) + (\overline{u}_h-{u}_h )$ and the estimation \eqref{eq:third} in Lemma \ref{Lem:aux} to have,
$$
  \|\nabla (u - {u}_h) \|_S 
  \le 
  \|\nabla (u_h  - \overline{u}_h ) \|_S  
  +
  \|\nabla ( u - \overline{u}_h ) \|_S  
  \leq C_0h \|f-\pi_h f\|_{\Omega}  + 
  \| \nabla ({u} - \overline{u}_h) \|_{S}\:.
 $$

   Next, we apply the result in Theorem \ref{theorem:local_est} to $\| \nabla ({u} - \overline{u}_h) \|_{S}$ and process the term $\overline{u}_h$ in $\overline{E}_1$ and $\overline{E}_2$.  
Because $\overline{u}_h$ is the best approximation to $\overline{u}$ in $V_h$, the hypercircle \eqref{eq:fem-hypercircle} with respect to $\pi_h f$ leads to
$$
\| \nabla \overline{u}_h - \mathbf{p}_h  \|_\Omega
\le \| \nabla u_h - \mathbf{p}_h  \|_\Omega \:.
$$
For the term $\| \nabla \overline{u}_h-\mathbf{p}_h \|_{\alpha}$  in $\overline{E}_2$ , apply the triangle inequality and \eqref{eq:third} to obtain
$$
\| \nabla \overline{u}_h-\mathbf{p}_h \|_{\alpha} \leq  \| \nabla (\overline{u}_h-u_h) \|_{\alpha} + \| \nabla u_h - \mathbf{p}_h \|_{\alpha} 
 \leq C_0h \|f-\pi_h f\|_{\Omega}  +\| \nabla u_h - \mathbf{p}_h \|_{\alpha}\: .
$$
Now, we can conclude as in \eqref{eq:local_est2}.

\end{proof}

\section{Convergence analysis and application to non-uniform mesh}
\label{sec:5}

In this section, we have an analysis on the convergence behavior for the proposed {\em a posteriori} error estimation and show its application in efficient computing with non-uniform meshes. 

For a solution $u \in H^2$  solved by FEM over a uniform mesh with mesh size $h$, the global error terms $\|\nabla u_h - \mathbf{p}_h\|_{\Omega}$ and $C(h)$ have the convergence rate as $O(h)$.  Thus, the following convergence rate is available in Theorem \ref{theorem:local_est2} .
\begin{equation}
\label{eq:meshsize_depedency}
 E_1= O(h), ~ E_2 = O(h^{1.5}); ~~ \|\nabla u - \nabla u_h \|_{S}  \le O(h) 
\end{equation}
Note that the order of $E_2$ can be further improved by selecting the Raviart-Thomas FEM space with  higher degree and theoretically $E_2$ can be arbitrarily small by selecting a good approximation of $\mathbf{p}_h$ to $\overline{u}$ using an independent quite refined mesh. Below is a detailed argument for this property.

{\noindent \bf Improving convergence rate of the global error term.} Let $V_{h}^{k} , 
~RT_{h}^{k-1}$ $(k\ge 1)$ be the conforming finite element space of degree $k$ and the Raviart-Thomas FEM space of degree $k-1$, respectively.
    Suppose $\overline{u}$ defined in \eqref{eq:aux} has $H^2$-regularity. Let us consider the following FEM solutions corresponding to $\overline{u}$:
    \begin{itemize}
        \item $\overline{w}_h \in V_{h}^{k}$ as the best approximation to  $\overline{u}$ under $|\cdot|_1$ norm ;
        \item $\widetilde{\mathbf{p}}_h \in RT_{h}^{k-1}$ as the best approximation  to $\nabla \overline{u}$ under $L^2$ norm, subject to the condition \eqref{eq:cond-ph}.
    \end{itemize}
    Then the following hypercircle equation holds.
    $$
    \|\nabla \overline{u} - \nabla \overline{w}_h \|_{\Omega}^2 + \| \nabla \overline{u}- \widetilde{\mathbf{p}}_h\|_{\Omega}^2 = \|\nabla \overline{w}_h - \widetilde{\mathbf{p}}_h \|_{\Omega}^2 \:.
    $$
    Since for a smooth enough solution $\overline{u}$, both $\|\nabla \overline{u} - \nabla \overline{w}_h \|_{\Omega}$ and $\| \nabla \overline{u}-\widetilde{\mathbf{p}}_h \|_{\Omega}$ have the convergence rate as $O(h^k)$, we have 
     \begin{equation*}
     \label{eq:up-est-higher}
        ( \|\nabla\overline{u}-\widetilde{\mathbf{p}}_h\|_{\Omega}  \leq ) \|\nabla\overline{w}_h - \widetilde{\mathbf{p}}_h \|_{\Omega} = O(h^k)~~ (k\ge 2)\:. 
    \end{equation*}
    By replacing the estimation of \eqref{eq:up-est} with $\|\nabla\overline{u}-\widetilde{\mathbf{p}}_h \|_{\Omega} \leq \|\nabla\overline{w}_h - \widetilde{\mathbf{p}}_h\|_{\Omega}$, 
    the global error terms $E_2, \overline{E}_2$ in Theorem \ref{theorem:local_est} and  \ref{theorem:local_est2}  become

    \begin{eqnarray}
    &&\overline{E}_2 := \left\{ 2\sqrt{2} C(h)\cdot\|\nabla \alpha\|_{L^{\infty}(\Omega)}  \cdot\|\nabla \overline{u}_h - \widetilde{\mathbf{p}}_h\|_{\Omega} \cdot\|\nabla \overline{w}_h - \widetilde{\mathbf{p}}_h\|_{\Omega}\:\right\}^{1/2} , ~ \nonumber \\
    &&E_2 := \left\{ 2\sqrt{2} C(h)\cdot\|\nabla \alpha\|_{L^{\infty}(\Omega)}  \cdot\|\nabla u_h - \widetilde{\mathbf{p}}_h\|_{\Omega} \cdot\|\nabla \overline{w}_h - \widetilde{\mathbf{p}}_h\|_{\Omega}\:\right\}^{1/2} . ~ \nonumber 
    \end{eqnarray}

    Note that both $\|\nabla \overline{u}_h - \widetilde{\mathbf{p}}_h\|_{\Omega} $ and $ \|\nabla u_h - \widetilde{\mathbf{p}}_h\|_{\Omega}$ have the convergence rate as $O(h)$, which can be confirmed by utilizing the hypercircle involving $\overline{u}, \overline{u}_h$ and $\mathbf{p}_h$.
    Therefore we have the convergence rate  as
    \begin{equation}
    \label{eq:improved_global_order}
        E_2, \overline{E}_2 =O(h^{(1+k/2)}) \:(k \geq 1). 
    \end{equation}
    It is worth to point out the selection of $\overline{w}_h$ and $\mathbf{p}_h$ is independent on the  objective FEM solution $u_h$. Thus, a large value of $k(\ge 2)$ will result in an improved convergence rate for global error terms; see numerical results in Figure \ref{fig:mesh_dependency_improved}, \ref{fig:local_mesh_dependency_imporoved}  of \S \ref{subsec:square}.
{
{\bf \noindent Application to non-uniform mesh.} Theoretical analysis on local error estimation tells that for $u \in H^2(\Omega)$ (\cite{Xu-2000,Xu-2001}): 
\begin{equation}
\label{eq:typcal_local_err_est}
\|u-  u_h\|_{1,S} = O(h_{\Omega'} + h_G^2). 
\end{equation}
Here, $h_{\Omega'}$ denotes the mesh size of $\Omega'$; $h_G$ the one for the mesh outside of $\Omega'$. Estimation \eqref{eq:typcal_local_err_est} implies an asymptotically optimal error with rate $O(h_{\Omega'})$ in $H^1$ norm  locally by taking $h_G = O(\sqrt{h_{\Omega'}})$.

Such {\em a priori} estimation motivates us to apply our proposed {\em a posteriori} error estimator to non-uniform meshes to have more efficient computation and error estimation. 
Here, the local error estimator in Theorem \ref{theorem:local_est2} is denoted by $$\widehat{E}_{L} :=  \sqrt{E_1^2+E_2^2 } + 2C_0h_G \|f - \pi_h f\|_{\Omega}\: .$$
For $f \in H^1(\Omega)$, the convergence rate of the projection error term $C_0h\|f-\pi_h f\|$ in $E_1$ is $O(h_G^2)$. 
For the error term $E_1$, it is expected that the local term $\|\nabla u_h -\mathbf{p}_h \|_{\alpha}= O(h_{\Omega'} + h_{G}^2)$. However, such 
a result is not discussed yet in the existing literature. 
In this paper, rather than theoretical analysis, by numerical experiment in 
\S \ref{subsec:square}, we confirm the asymptotic behavior of $E_1=O(h_{\Omega'})$  over a non-uniform mesh when the mesh size is selected as $h_G = \sqrt{h_{\Omega'}}$. 
With analogous argument to the one for \eqref{eq:improved_global_order},  it is easy to see that the 
convergence rate of the global error term $E_2$ and $\overline{E}_2$ are  dominated by the global mesh size $h_G$:
    \begin{equation}
    \label{eq:improved_global_order_non_uniform_mesh}
        E_2, \overline{E}_2 =O(h_G^{(1+k/2)}) \:(k \geq 1). 
    \end{equation}

In \S 6.2, by numerical examples, we investigate the the convergence rate of error estimators when the mesh size is chosen as $h_G = \sqrt{h_{\Omega'}}$. In case that only $u_h \in V_h^{(1)}$ and $\mathbf{p}_h \in RT_h^{(0)}$ are used in the local error estimation, it is validated that
$$
 E_1= O(h_{\Omega'}), ~ E_2 = O(h_G^{1.5}); ~~ \widehat{E}_L \le O(h_{\Omega'} + h_G^{1.5}).
$$
In case that $\overline{w}_h \in V_h^{(k)}$ and $\widetilde{\mathbf{p}}_h \in RT_h^{(k-1)}~ (k \ge 2)$ along with \eqref{eq:improved_global_order_non_uniform_mesh} are also used in the local error estimation, we have
$$
 E_1= O(h_{\Omega'}), ~ E_2 = O(h_G^{(1+k/2)}); ~~ \widehat{E}_L \le O(h_{\Omega'} + h_G^{(1+k/2)}).
$$

\section{Numerical experiments}
\label{sec:6}
\subsection{Preparation}

\indent{} The selection of bandwidth of the $B_{S}$ is important in the local error estimation. A large bandwidth of $B_{S}$ leads to a large value of $E_1$, while a small bandwidth of $B_{S}$ results in a large value of $\| \nabla \alpha \|_{L^{\infty}(\Omega)}$ in $E_2$.
 Therefore, in each example, we first investigate the impact of the bandwidth of $B_{S}$, and then take an appropriate width of $B_{S}$ for subsequent computation.

\medskip

Besides the symbols $E_1, E_2$ in \eqref{eq:local_est2}, we introduce new symbols as follows:
\begin{itemize}
\item The local error is denoted by
$$
      E_{L} := \| \nabla u - \nabla u_h  \|_{S}.
      \quad
      \widehat{E}_{L} :=  \sqrt{E_1^2+E_2^2 } + 2C_0h \|f - \pi_h f\|_{\Omega}\: .
      $$
\item
       The global error and its estimation in \eqref{eq:global-error-estimation} are denoted by
$$ 
  E_{G} := \| \nabla u - \nabla u_h  \|_{\Omega},
  \quad \widehat{E}_{G} := \| \nabla u_h - \mathbf{p}_h  \|_{\Omega} + C_0h \|f -\pi_h f \|_{\Omega}.
  $$
\end{itemize}
Here, $u_h \in V_h$ and $\mathbf{p}_h \in RT_h$ are finite element solutions of the objective problems; $\mathbf{p}_h$ also satisfies the condition \eqref{eq:cond-ph}.  

\subsection{Square domain}
\label{subsec:square}
The error estimation proposed in this paper is applicable to problems with different boundary conditions. To illustrate this feature, let us consider the following Poisson equations over the unit square domain $\Omega=(0,1)^2$, where the subdomain is selected as $S=(0.375,0.625)^2$.
\begin{enumerate}
\item [(a)] Dirichlet boundary condition (exact solution $u = \sin(\pi x)\sin(\pi y)$).
\begin{equation}
        \label{eq:experiment}
          - \Delta u =  2 \pi^2 \sin(\pi x) \sin(\pi y) \mbox{ in } \Omega, \quad u = 0 \mbox{ on } \partial \Omega .
\end{equation}
\item [(b)] Neumann boundary condition (exact solution $u = \cos(\pi x)\cos(\pi y)$).
\begin{equation}
      \label{eq:experiment2}
            - \Delta u = 2 \pi^2 \cos(\pi x) \cos(\pi y) \mbox{ in } \Omega ,\quad  \frac{\partial u}{\partial \mathbf{n}}= 0 \mbox{ on } \partial \Omega,
             \quad  \int_\Omega u d x =0 \:.
\end{equation}
\end{enumerate}
 The finite element solutions $u_h, \mathbf{p}_h$ are computed with uniform meshes, and the mesh size $h$ here is chosen as the leg length of the triangle element for an uniform mesh.
%
    \begin{figure}[h!]
    \begin{center}
    \includegraphics[width=5.0cm]{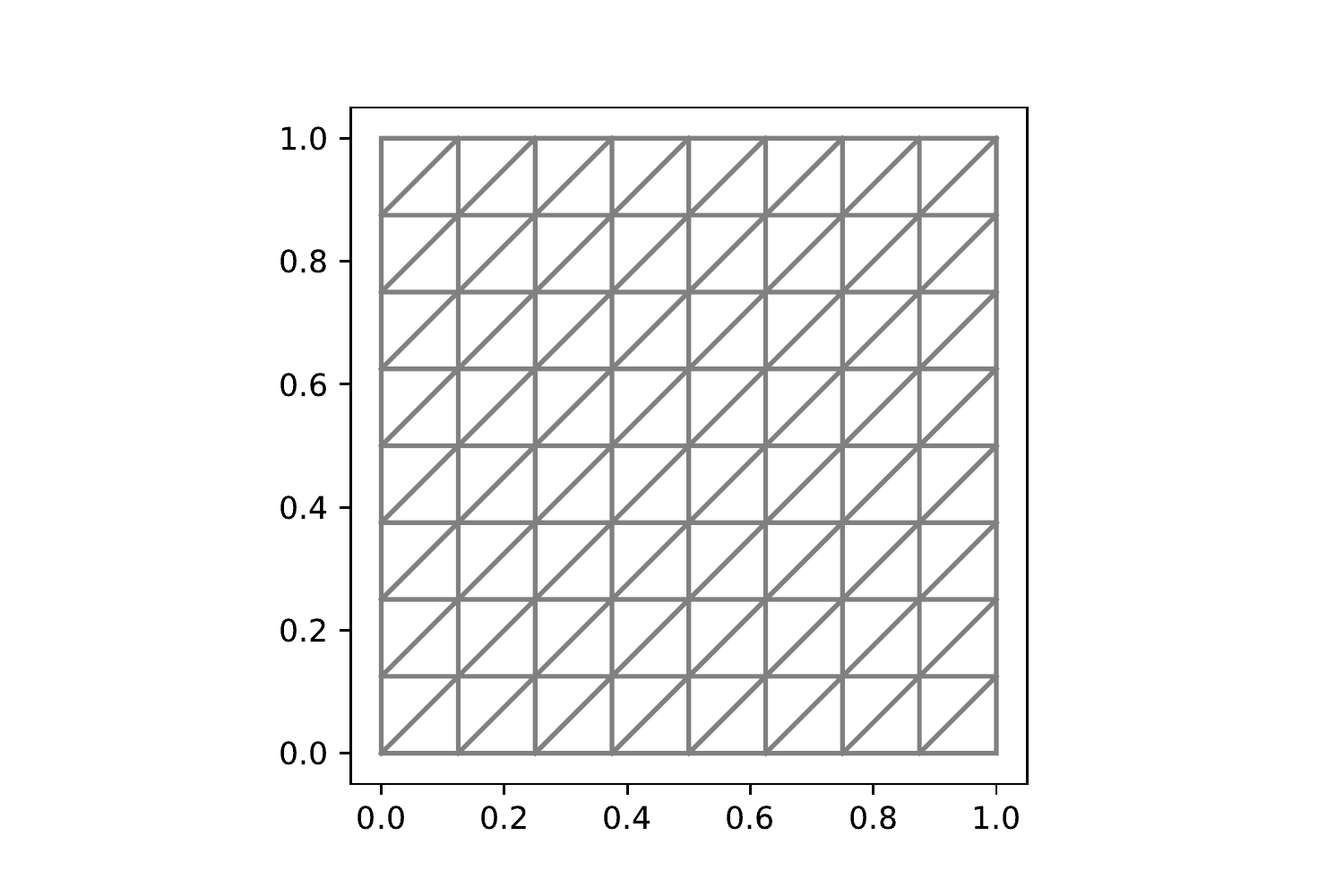}
    \includegraphics[width=5.0cm]{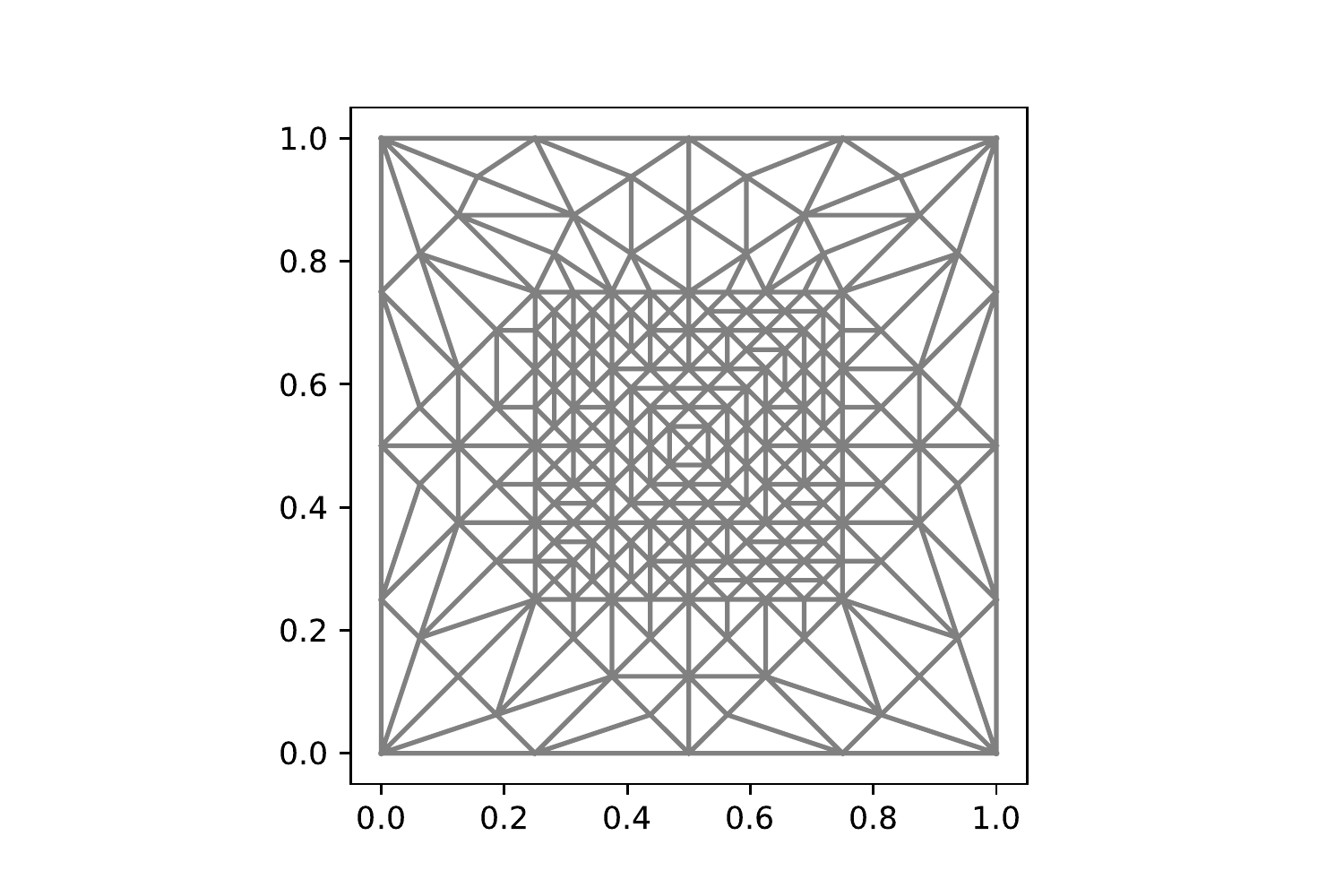}
    \caption{Uniform and non-uniform mesh (Rectangle domain). \label{fig:local_refined_mesh}}
    \end{center}
    \end{figure}
\begin{figure}[h]
	\begin{center}
	\includegraphics[width=3.5cm,angle=0]{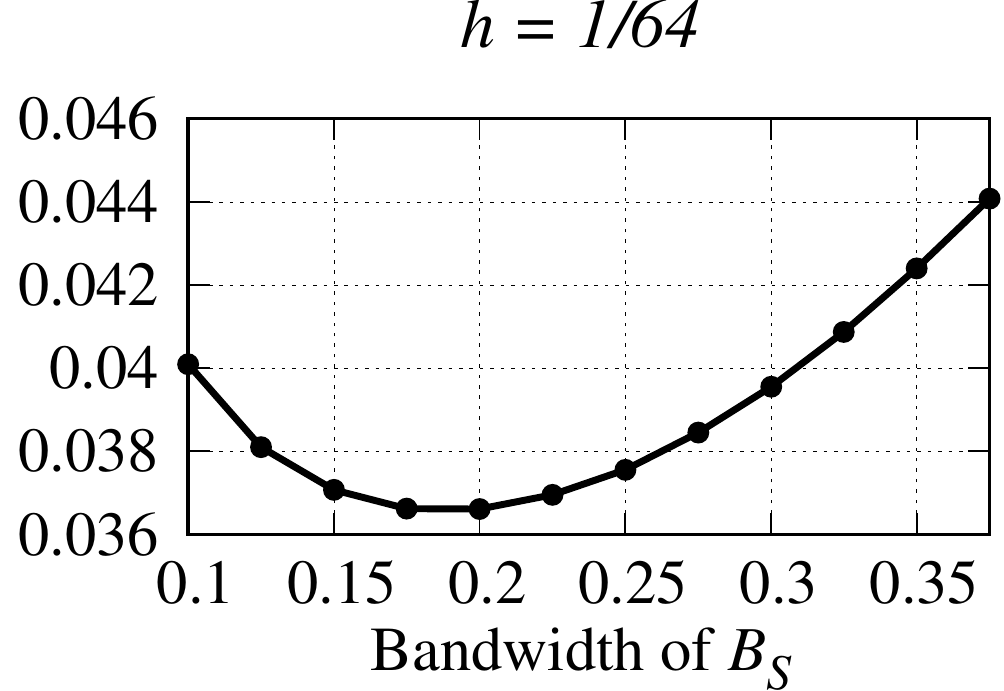} \quad
\includegraphics[width=3.5cm,angle=0]{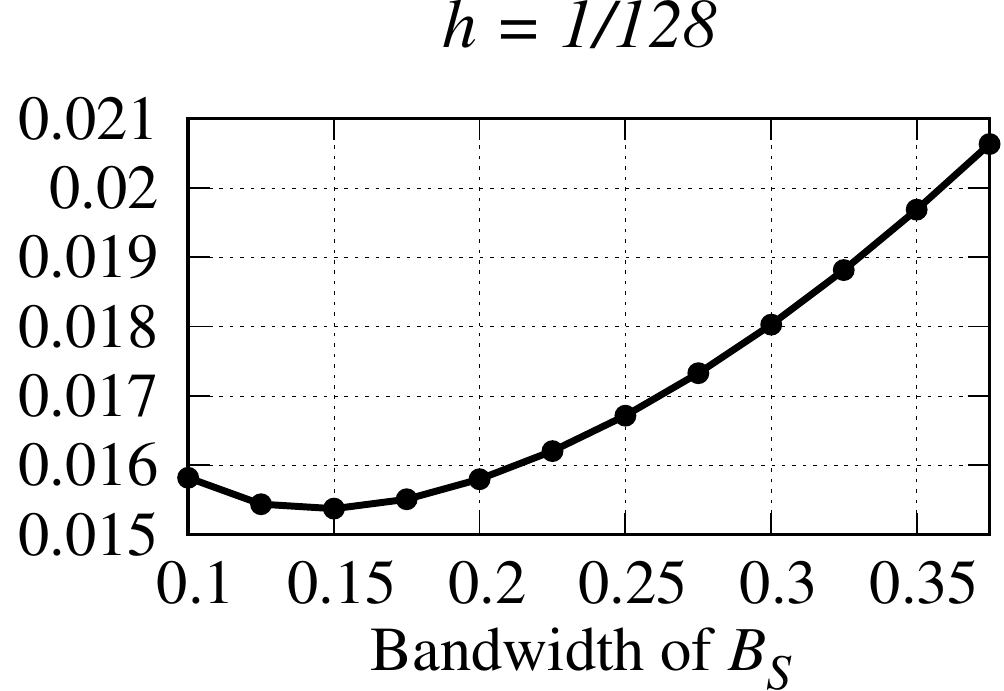}  \\
\includegraphics[width=3.5cm,angle=0]{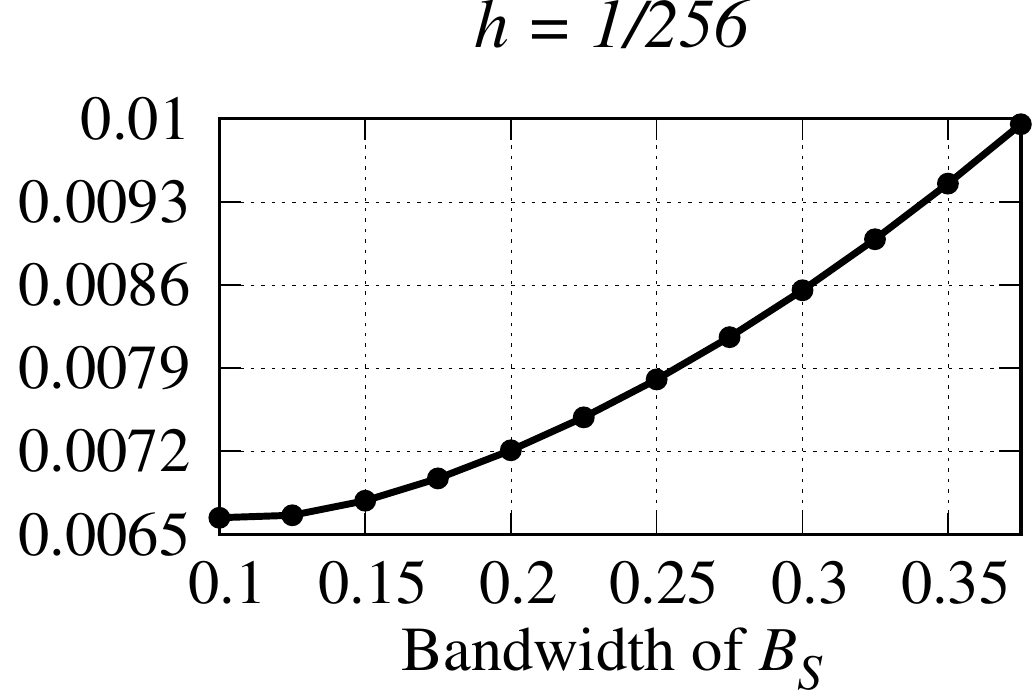} \quad
\includegraphics[width=3.5cm,angle=0]{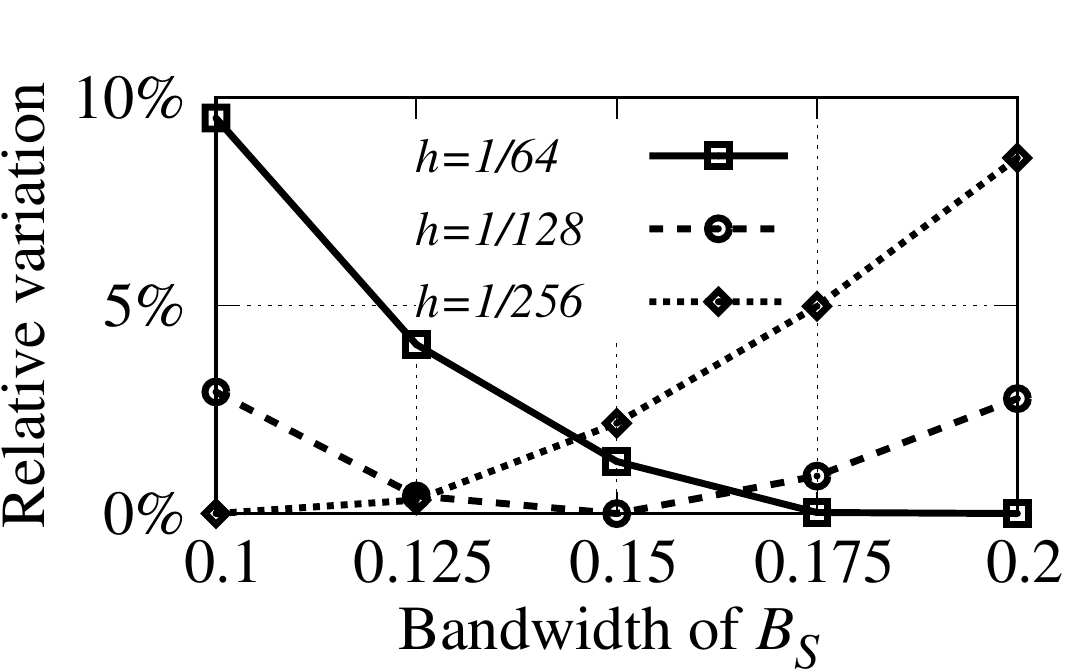}  \\\vspace{18pt}
	\caption{Dependency of local error estimation on the bandwidth of $B_{S}$ (Dirichlet BVP and square domain, uniform mesh).}
	\label{fig:RD-tol}

	\includegraphics[width=3.5cm,angle=0]{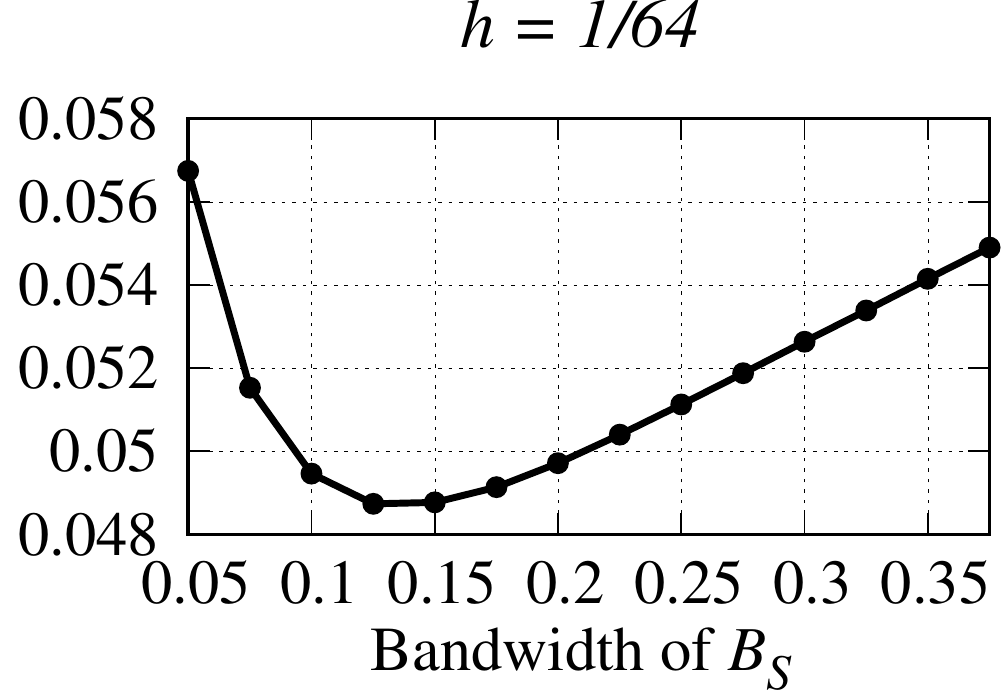} \quad
\includegraphics[width=3.5cm,angle=0]{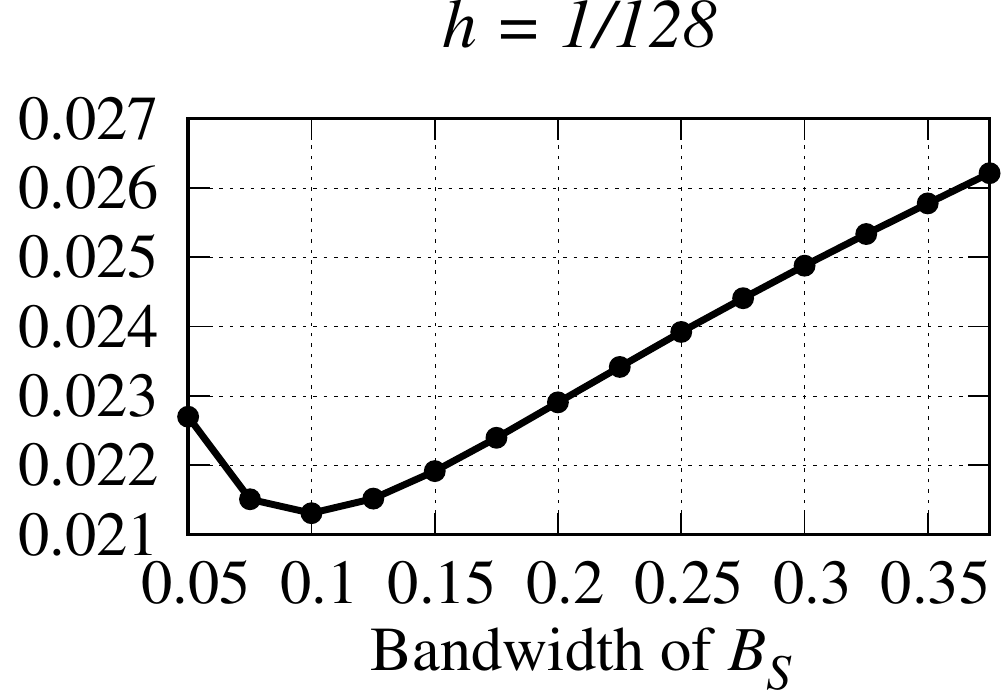}  \\
\includegraphics[width=3.5cm,angle=0]{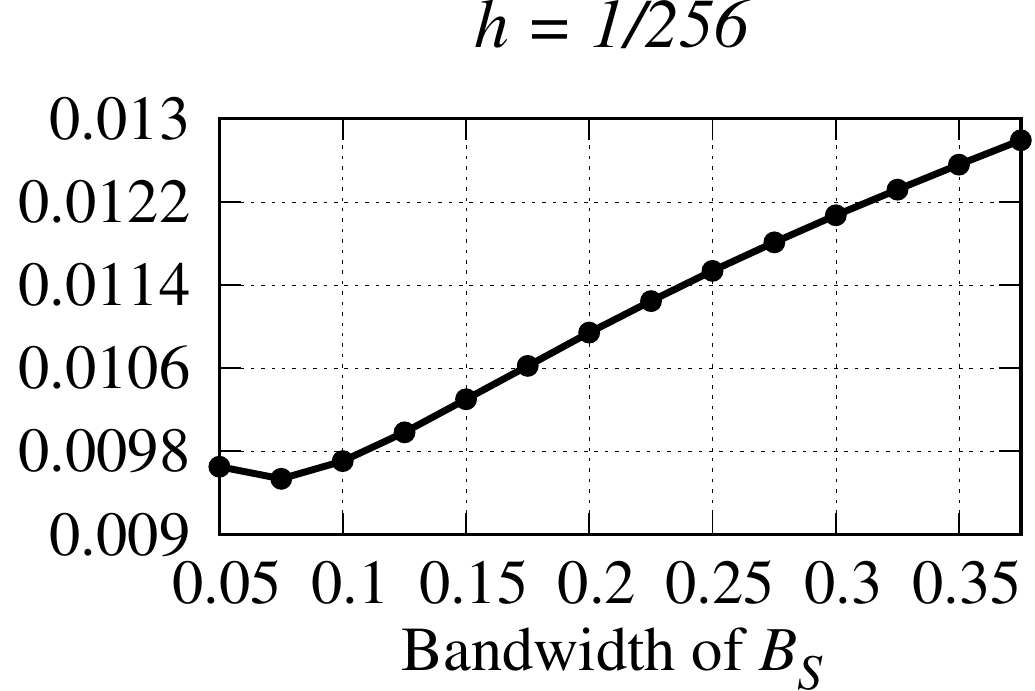} \quad
\includegraphics[width=3.5cm,angle=0]{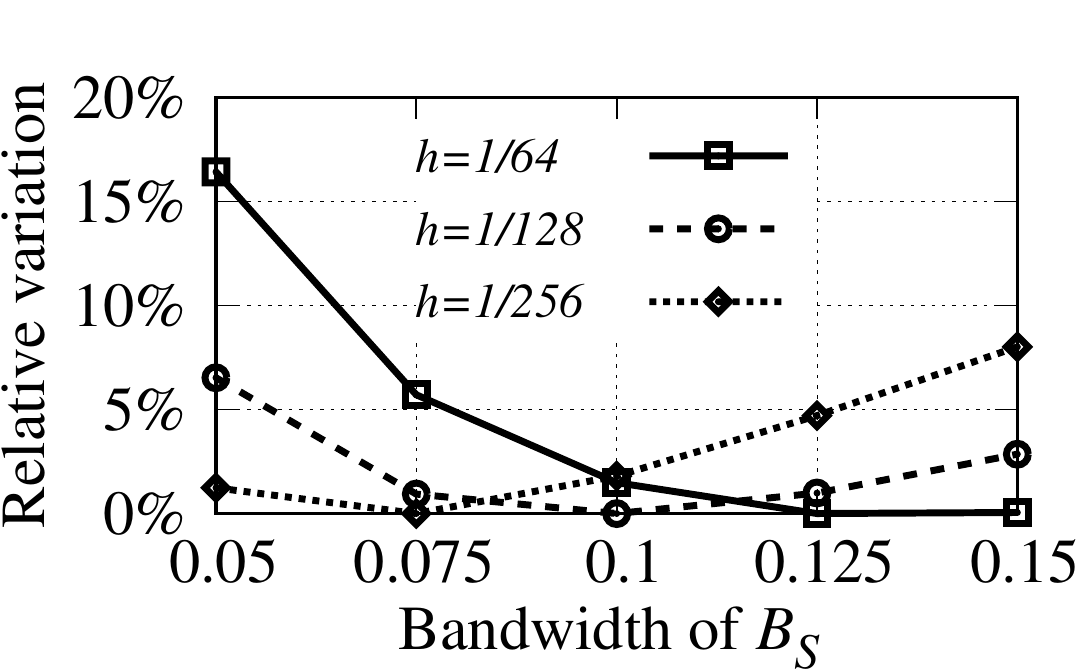}  \\\vspace{18pt}
	\caption{Dependency of local error estimation on the bandwidth of $B_{S}$ (Neumann BVP and square domain, uniform mesh).}
	\label{fig:RD-Neumann-tol}
	\end{center}
\end{figure}

{\medskip \bf \noindent Asymptotic behavior of the proposed local error estimator over a uniform mesh.} For Dirichlet and Neumann boundary conditions, the dependencies of the local error estimator $\widehat{E}_L$ on the bandwidth of $B_S$ are shown in Figure \ref{fig:RD-tol} and Figure \ref{fig:RD-Neumann-tol}, respectively.
The relative variation of local error estimator with respect to bandwidth selection is displayed for two problems. It is noteworthy that the local error estimation is not significantly sensitive to variations in bandwidth. 
  For example, in Figure \ref{fig:RD-tol}, for $h=1/64$, the relative variation in error estimation with respect to a bandwidth in the range $[0.125,0.275]$ is less than 5\%.  

 In the following discussion, the bandwidth of $B_{S}$ is selected as $0.15$ for the Dirichlet boundary condition and $0.10$ for the Neumann boundary condition.

\begin{figure}[thp]
	\begin{center}
	\includegraphics[angle=0,width=5.6cm]{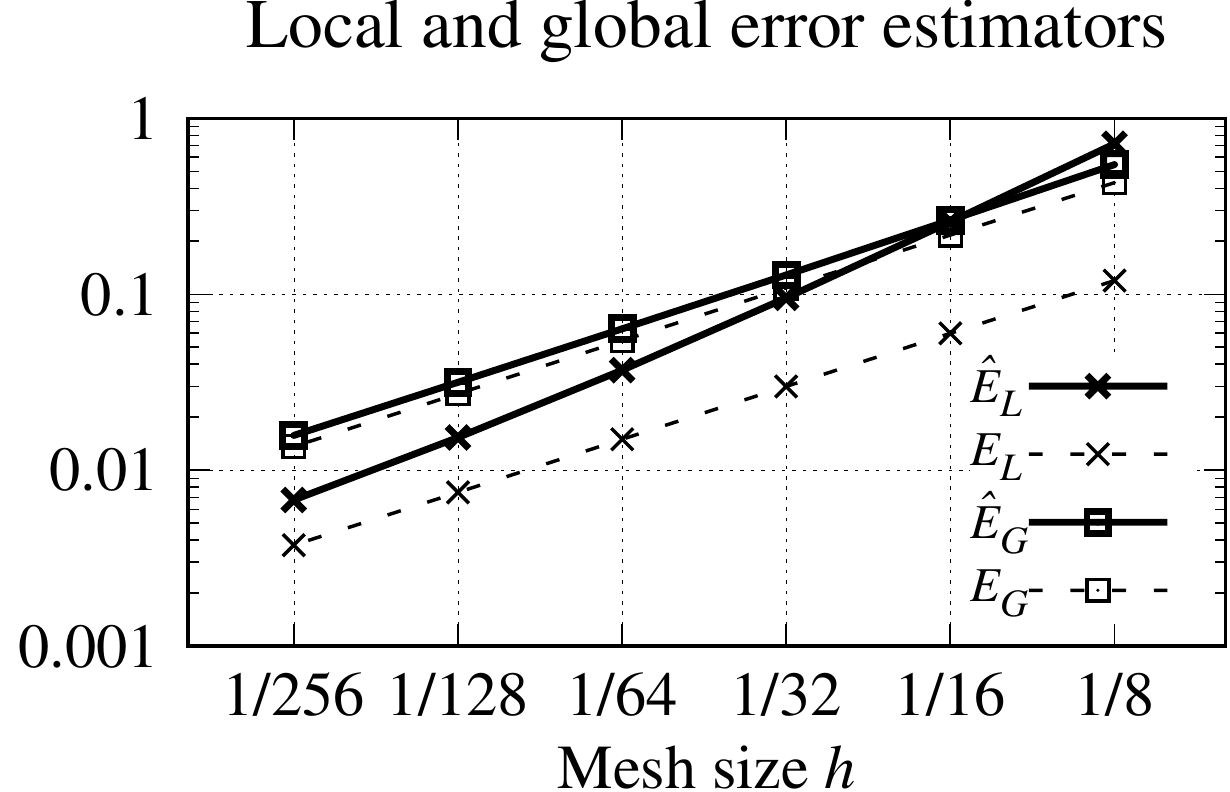} \hspace{5pt}
	\includegraphics[angle=0,width=5.6cm]{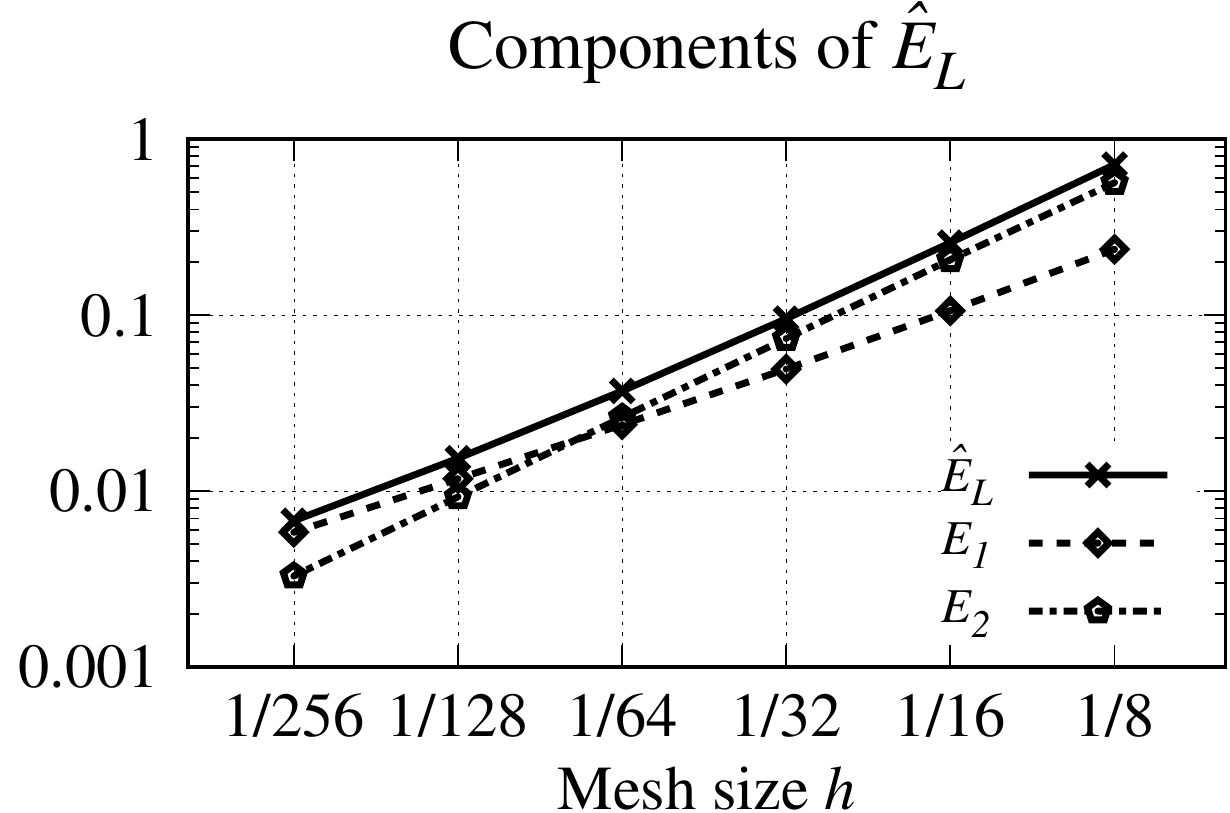} \\\vspace{18pt}
	\caption{Error estimators for Dirichlet BVP (square domain, uniform mesh). }
	\label{fig:Errest_S}
	\end{center}

	\begin{center}
	\includegraphics[angle=0,width=5.6cm]{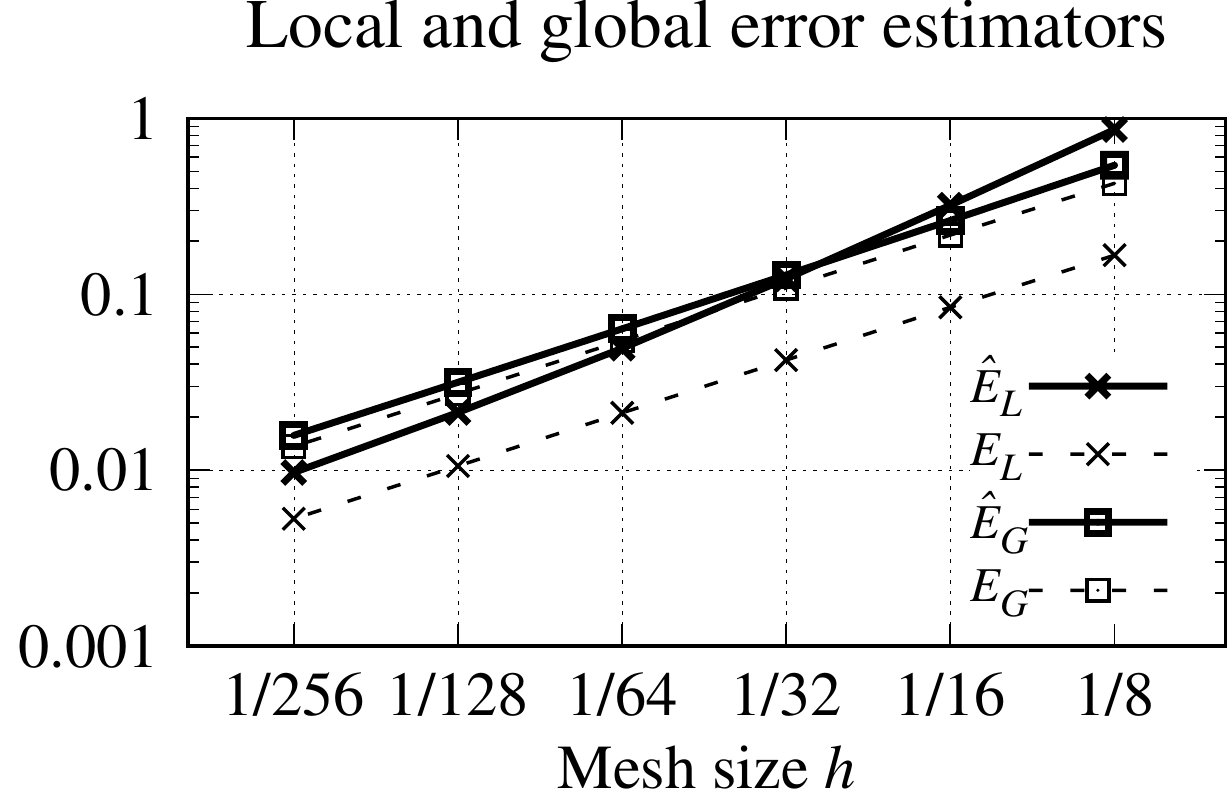} \hspace{5pt}
	\includegraphics[angle=0,width=5.6cm]{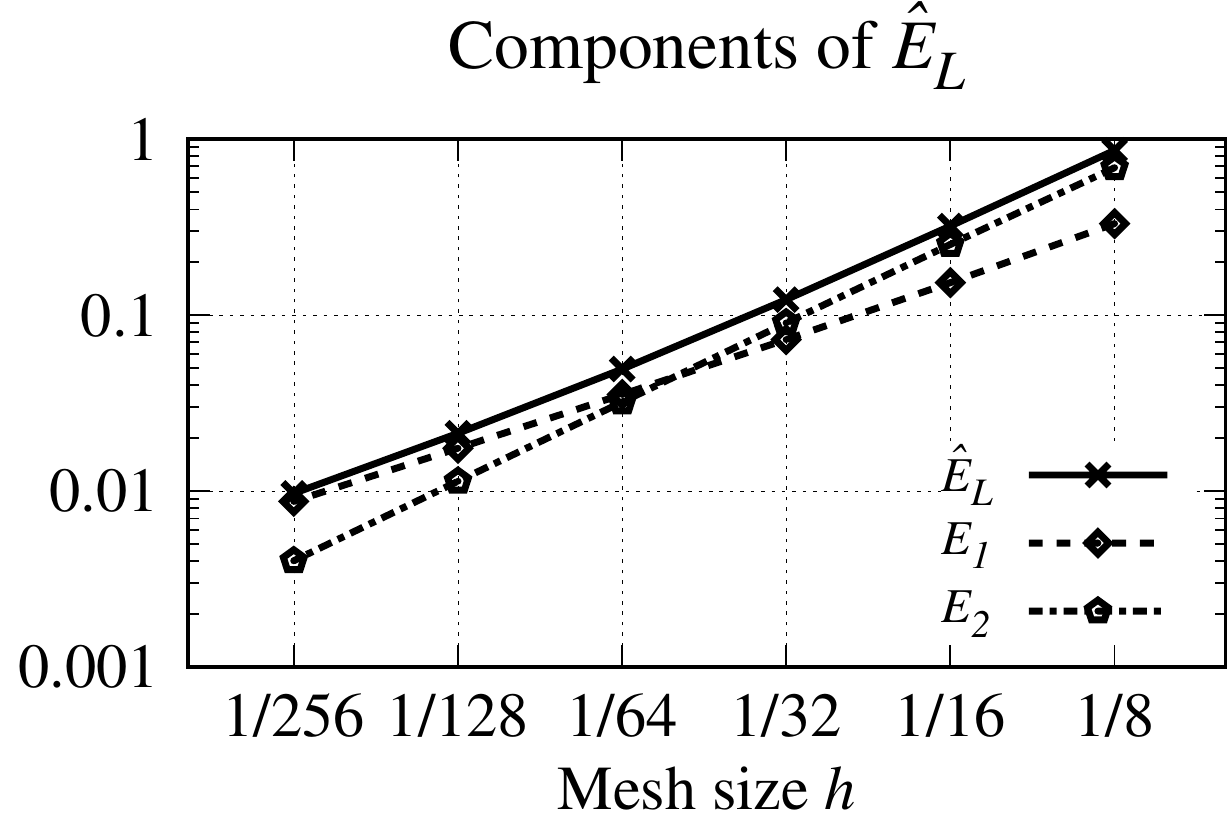} \\\vspace{15pt}
	\caption{Error estimators for Neumann BVP (square domain, uniform mesh). }
	\label{fig:Errest_Neumann_S}
	\end{center}
\end{figure}
\begin{table}[thp]
\caption{Error estimate for Dirichlet BVP (square domain , uniform mesh)\label{table:result_S}}

\begin{center}
\begin{tabular}{|c|cc|c|cc|cc|}
\hline
\rule[-0.05cm]{0pt}{0.4cm}{} 
$h$&$\kappa_h$&$C(h)$&$E_{L}$&$E_1$&$E_2$&$\widehat{E}_{L}$&$\widehat{E}_{G}$\\\hline\hline
      1/16  &  0.030 &0.036  & 0.060  & 0.106 & 0.206 & 0.258  & 0.264      \\
      1/32  &  0.015 & 0.018  & 0.030  & 0.049 & 0.074 & 0.095  & 0.129      \\
      1/64  & 0.008 & 0.009  & 0.015  & 0.024 & 0.026 & 0.037  & 0.064      \\
      1/128 & 0.004 &0.005  & 0.007   & 0.012 & 0.009 & 0.015  & 0.032      \\
      1/256 & 0.002 &0.002  & 0.004   & 0.006 & 0.003 & 0.007  & 0.016      \\
\hline
\end{tabular}
\end{center}
\end{table}

\begin{table}[thp]
\caption{Error estimate for Neumann BVP (square domain, uniform mesh).}
\begin{center}
\begin{tabular}{|c|cc|c|cc|cc|}
\hline
\rule[-0.05cm]{0pt}{0.4cm}{} 
$h$&$\kappa_h$&$C(h)$&$E_{L}$&$E_1$&$E_2$&$\widehat{E}_{L}$&$\widehat{E}_{G}$\\\hline\hline
      1/16  &  0.030 &0.036  & 0.084   & 0.153  & 0.252 & 0.320  & 0.263      \\
      1/32  &  0.015 & 0.018  & 0.042    & 0.073 & 0.090  & 0.122  & 0.129      \\
      1/64  & 0.008 & 0.009  & 0.021   & 0.036 & 0.032  & 0.049  & 0.064      \\
      1/128 & 0.004 &0.005  & 0.011   & 0.018 & 0.011  & 0.021  & 0.032      \\
      1/256 & 0.002 &0.002  & 0.005    & 0.010 & 0.004  & 0.010  & 0.016      \\
\hline
\end{tabular}
\label{table:result_Neumann_S}
\end{center}
\end{table}
   A detailed discussion on each component of the error estimators is also presented; see Table \ref{table:result_S} and Figure \ref{fig:Errest_S} for Dirichlet boundary condition, Table \ref{table:result_Neumann_S} and Figure \ref{fig:Errest_Neumann_S} for Neumann boundary condition. From the numerical results, we confirm that for both the problems the main term $E_1$ of the error estimation \eqref{eq:local_est2} becomes dominant when $h \leq 1/128$, which agrees with the analysis in \S \ref{sec:5}. 
  
{
 \noindent \textbf{Improved convergence rate of the global error term  $E_2$.} Here, we confirm the improvement of the local error estimator discussed in \S \ref{sec:5}, when the approximation $\mathbf{p}_h$ to $\nabla u$ is obtained in a higher degree Raviart-Thomas FEM space. 
 Table \ref{table:improve_global_order} and Figure \ref{fig:mesh_dependency_improved} show that the global error term $E_2$ has an improved convergence rate as $O(h^2)$ by using $\mathbf{p}_h \in RT_h^1$. The numerical results support the theoretical result \eqref{eq:improved_global_order}.}
    \begin{figure}[thp]
    	\begin{center}
    	\includegraphics[angle=0,width=5.6cm]{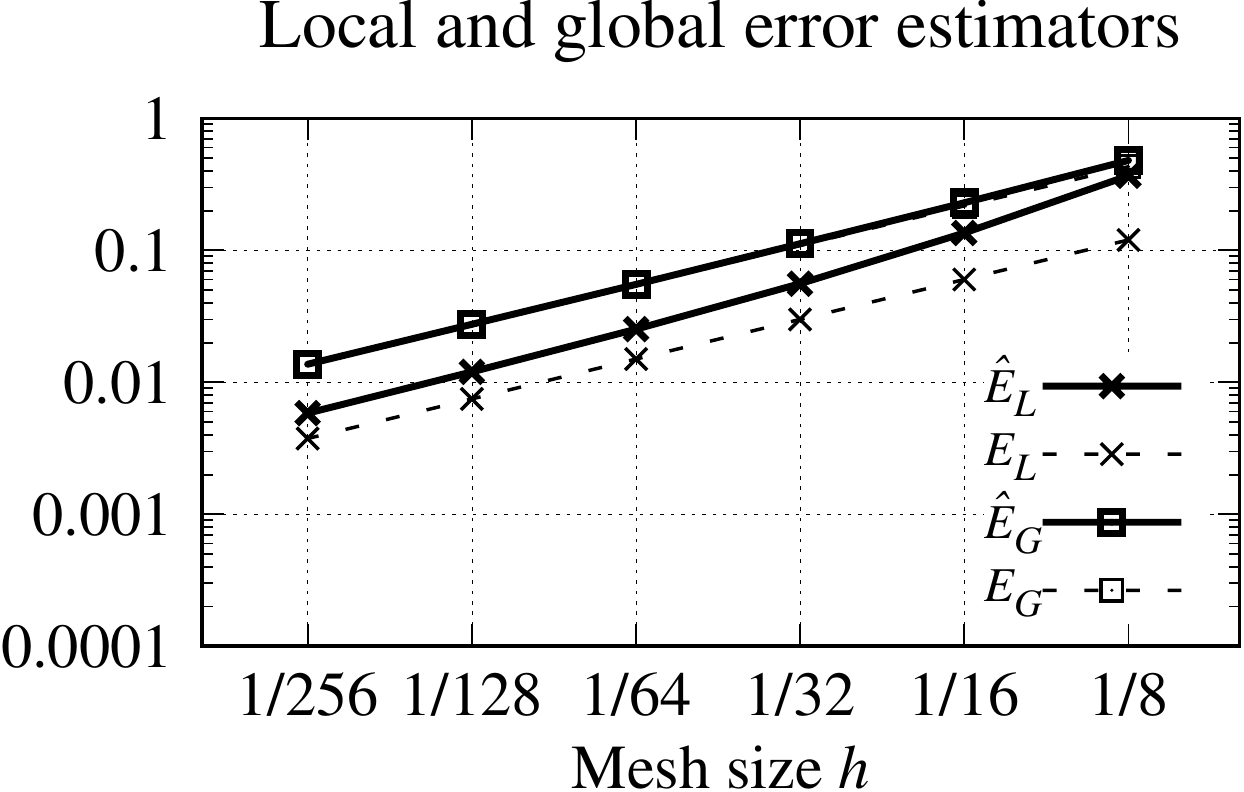}
    	\includegraphics[angle=0,width=5.6cm]{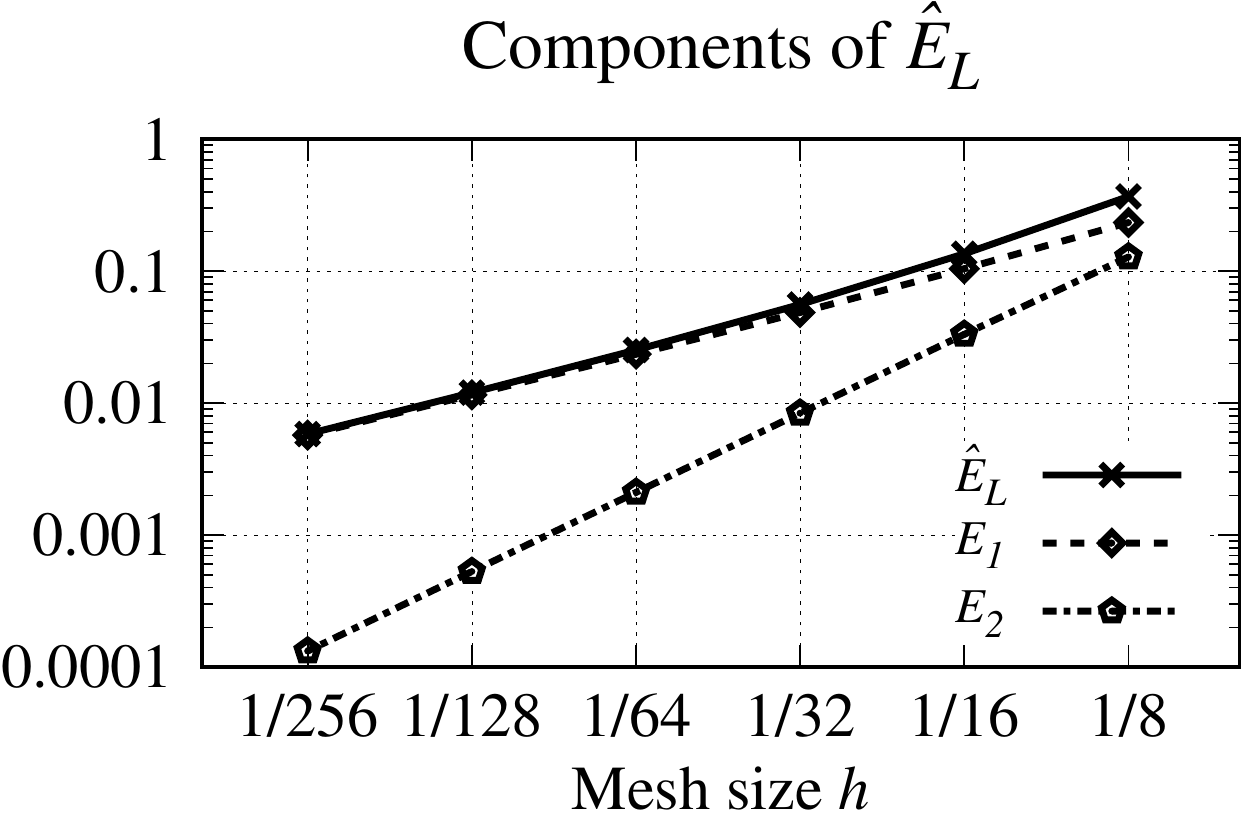} 
    	 \\\vspace{18pt}
    	\caption{ Improved convergence rate of the local error estimator \eqref{eq:improved_global_order} with $\overline{w}_h \in V_h^{2}$, $\mathbf{p}_h \in RT_h^{1}$ (uniform mesh for squared domain). }
    	\label{fig:mesh_dependency_improved}
    	\end{center}
    \end{figure}
    \begin{table}[thp]
    \caption{Convergence rate of $E_2$ w.r.t. the selection of $\mathbf{p_h}  \in RT_{h}^{k-1}$.}
    \begin{center}
    \begin{tabular}{|c|cc|cc|}
    \hline
    \rule[-0.05cm]{0pt}{0.4cm}{} 
    $h$ & $E_2$ $(k=1)$   & Order & $E_{2} (k=2)$ & Order  \\ \hline\hline 
          1/16    & 0.206   & -     & 0.036 & -     \\
          1/32    & 0.074   & 1.484 & 0.009 & 1.982      \\
          1/64    & 0.026   & 1.493 & 0.002 & 1.992   \\
          1/128   & 0.009   & 1.497 & 5.7e-4  & 1.997     \\
          1/256   & 0.003   & 1.499 & 1.4e-4  & 1.999       \\
    \hline
    \end{tabular}
    \label{table:improve_global_order}
    \end{center}
\end{table}

 {
 \noindent \textbf{Convergence behavior for  non-uniform meshes.}
Based on numerical results, we investigate the behavior of our proposed estimator \eqref{eq:local_est2} for non-uniform meshes under the setting $h_G = O(\sqrt{h_{\Omega'}})$; see a sample non-uniform mesh in Figure \ref{fig:local_refined_mesh}. The subdomain $S$ and the bandwidth $B_S$ are set to $(0.375,0.625)^2$ and $0.125$, respectively. 
 
\begin{figure}[h!]
	\begin{center}
	\includegraphics[angle=0,width=5.6cm]{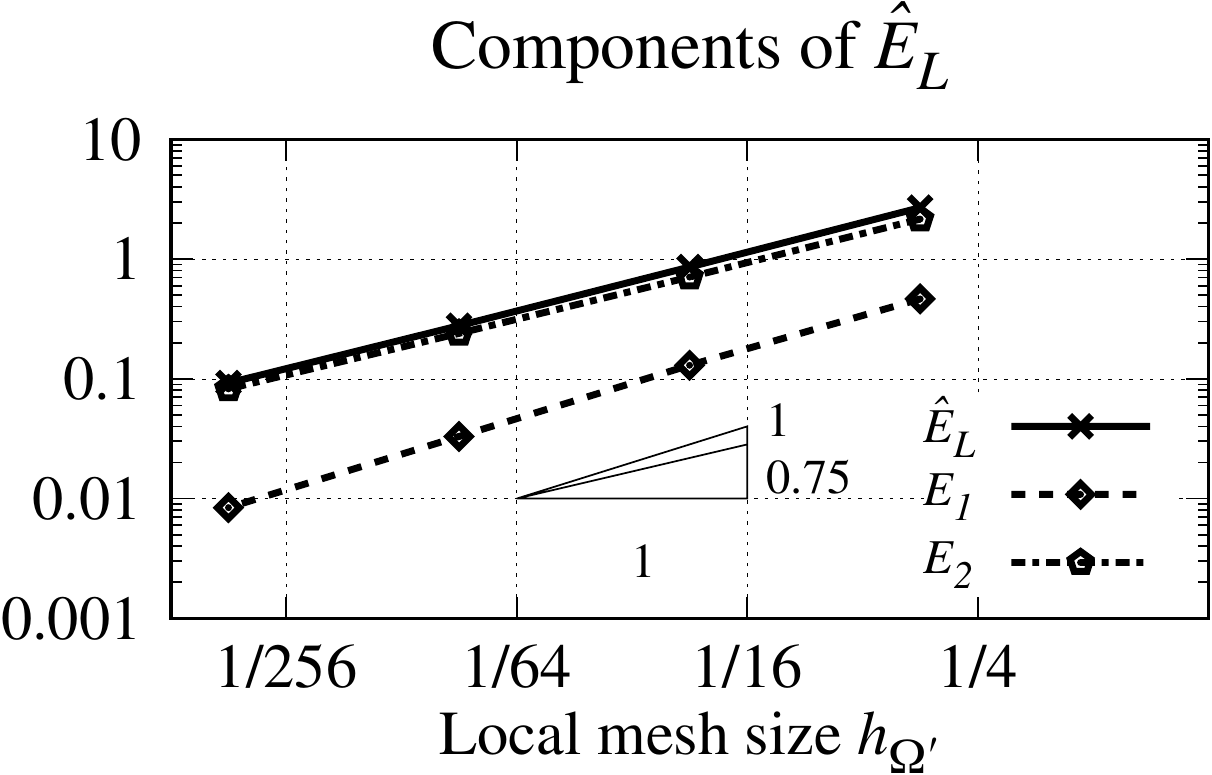} 
	\includegraphics[angle=0,width=5.6cm]{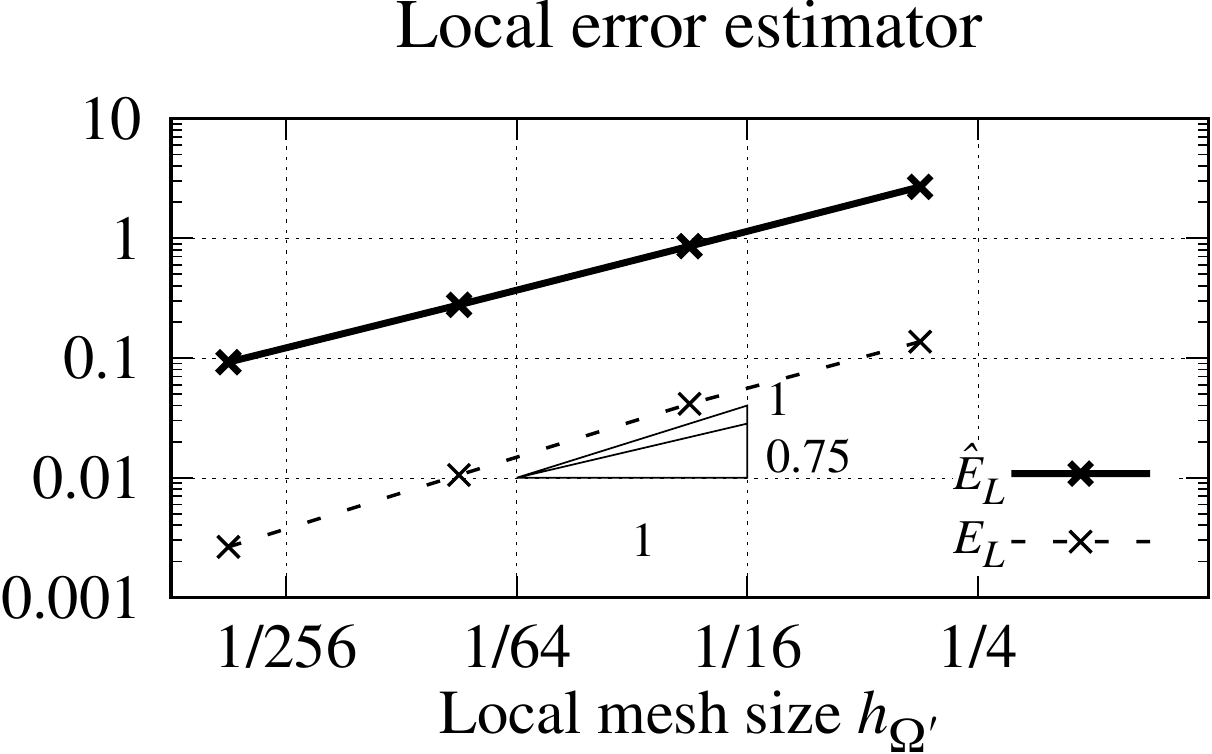} \\\vspace{18pt}
	\caption{ Local error estimator for lowest order FEM over non-uniform mesh (squared domain). }
	\label{fig:lowest_local_mesh_dependency}
	\end{center}
\end{figure}
  
    \begin{figure}[h!]
    	\begin{center}
    	\includegraphics[angle=0,width=5.6cm]{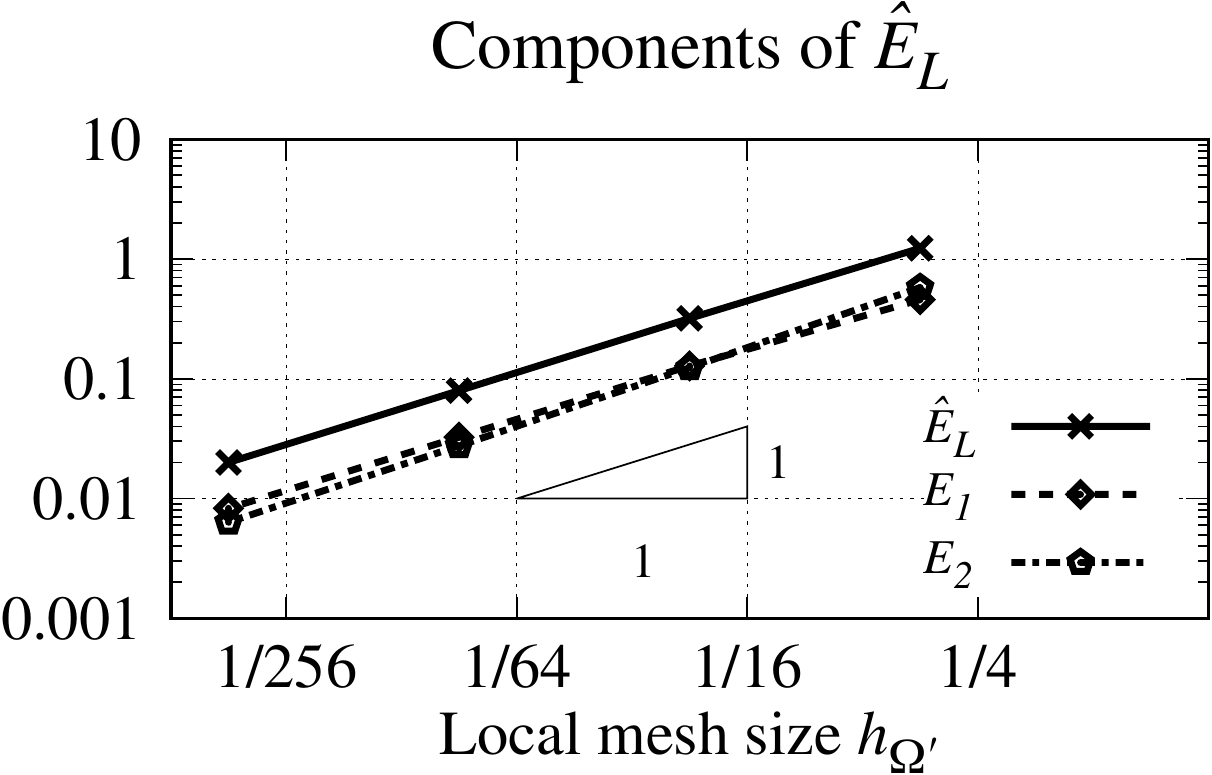} 
    	\includegraphics[angle=0,width=5.6cm]{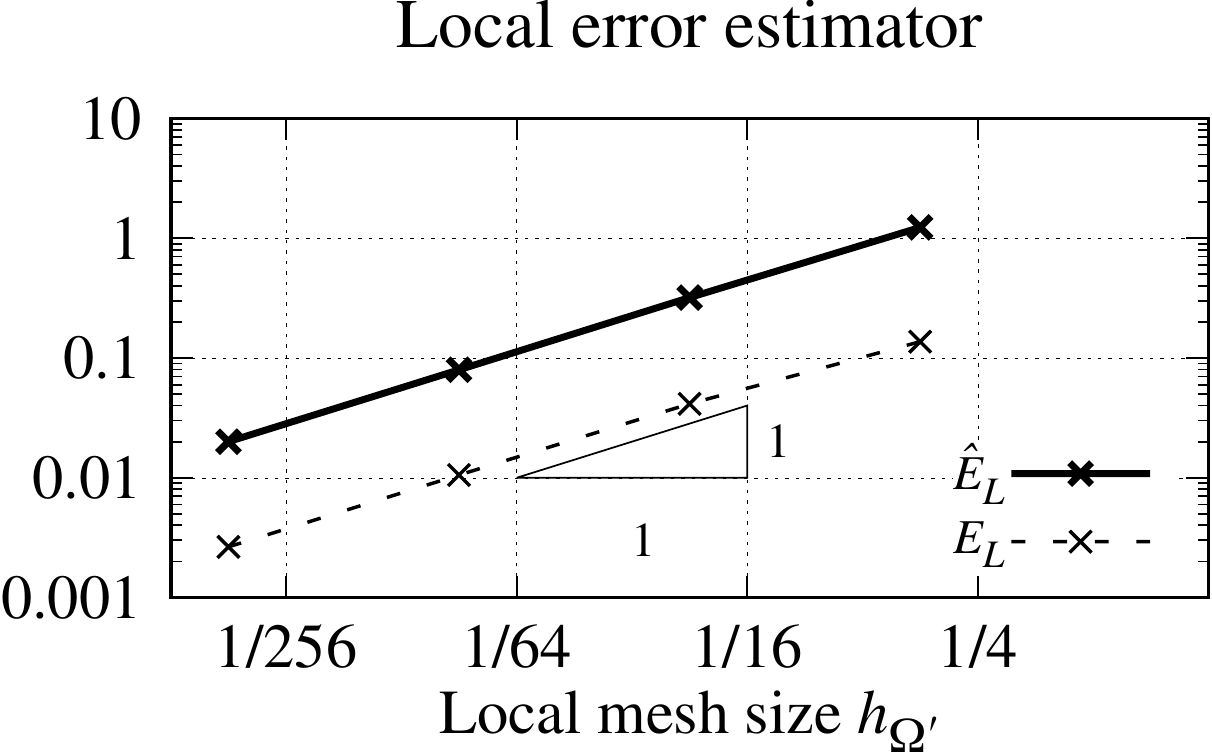} \\\vspace{18pt}
    	\caption{ Improvement of the local error estimator \eqref{eq:local_est2} for FEM of higher degree over non-uniform meshes (squared domain). }
    	\label{fig:local_mesh_dependency_imporoved}
    	\end{center}
    \end{figure}
    
    \begin{table}[h!]
    \caption{Convergence rate of $\|\nabla u_h - \mathbf{p}_h \|_{\alpha}$ over non-uniform mesh (squared domain).}
    \begin{center}
    \begin{tabular}{|c|c|c|}
    \hline
    \rule[-0.05cm]{0pt}{0.4cm}{} 
    $h_{\Omega'}$ & $\|\nabla u_h - \mathbf{p}_h \|_{\alpha} $  & Order   \\ \hline\hline 
          0.177    & 0.221   & -       \\
          0.044   & 0.059   & 0.957     \\
          0.011    & 0.015   & 1.007   \\
          0.003   & 0.004   & 0.997      \\
    \hline
    \end{tabular}
    \label{table:uh_2_ph_order}
    \end{center}
\end{table}
     The numerical results in Figure \ref{fig:lowest_local_mesh_dependency} and Table \ref{table:uh_2_ph_order} show that the convergence rates of  $\|\nabla u_h - \mathbf{p}_h \|_{\alpha}$ and $E_1$ are almost $O(h_{\Omega'})$.
     The numerical results support the expectation that $\|\nabla u_h - \mathbf{p}_h \|_{\alpha} = O(h_{\Omega'})$ under current mesh configuration (i.e., $h_G =\sqrt{h_{\Omega'}}$).
      In case that the lowest degree Raviart-Thomas FEM employed to compute the global term $E_2$, the convergence rate of the estimator $\widehat{E}_L$ is approximately $O(h_{\Omega'}^{0.75})$.
      When  $\tilde{p}_h$ is selected from $RT^{(1)}$,  
      the convergence rate of $E_2$ and $\widehat{E}_L$ is improved to be $O(h_{\Omega'})$;  
     see Figure \ref{fig:local_mesh_dependency_imporoved}.
     The theoretical convergence rate of $\|\nabla u_h - \mathbf{p}_h \|_{\alpha}$ will be considered in our succeeding research.
     
     As a conclusion, our proposed local error estimator is dominated by the local error term $E_1 = O(h_{\Omega'})$. Thus, it is possible to increase the efficiency of computation by using 
     a non-uniform mesh with a raw triangulation for the subdomain outside of the part of interest.}

\subsection{L-shaped domain}

\begin{figure}[h!]
\begin{center}
\includegraphics[width=4cm]{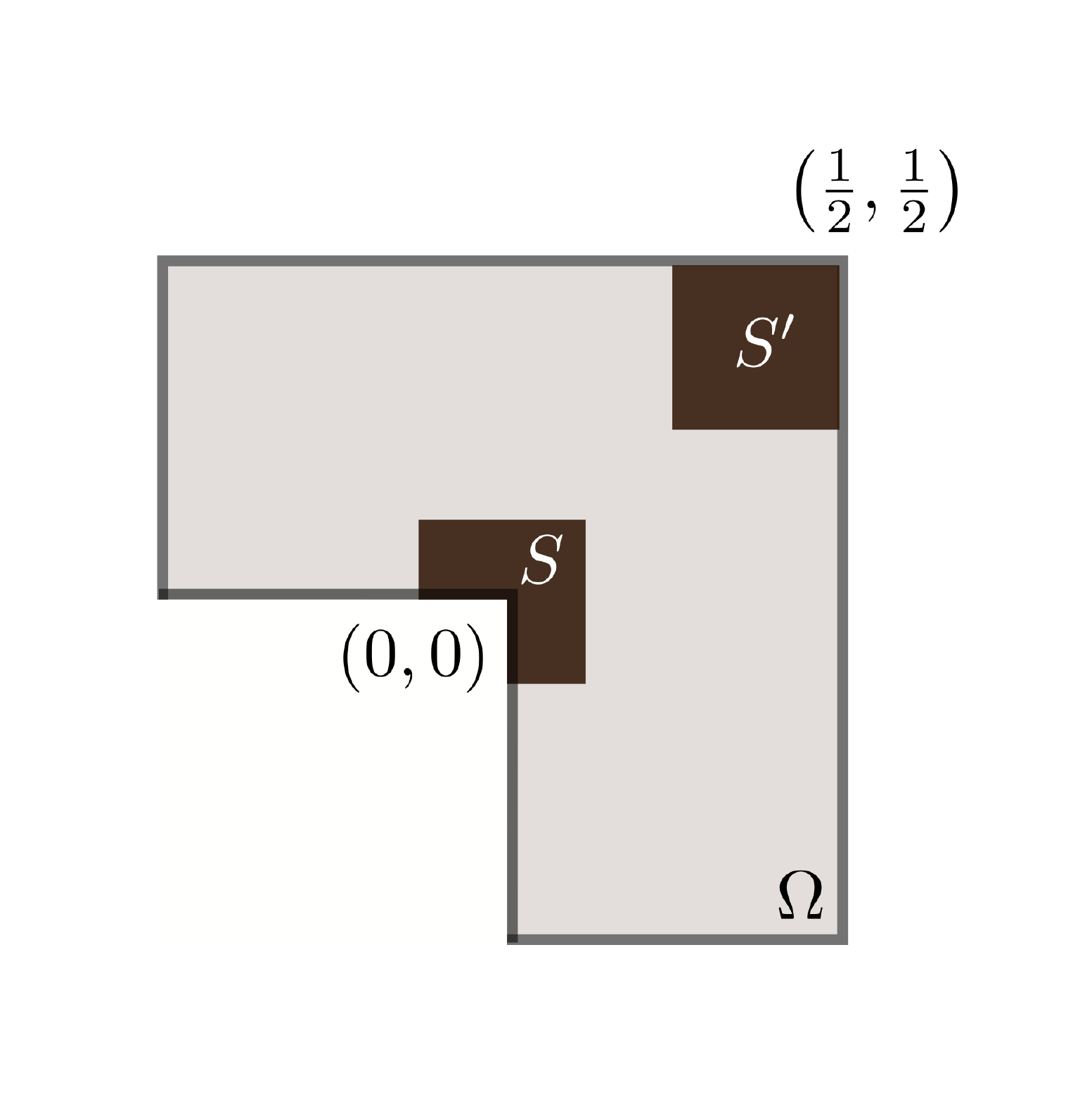} \hspace{-20pt}
\includegraphics[width=5.2cm]{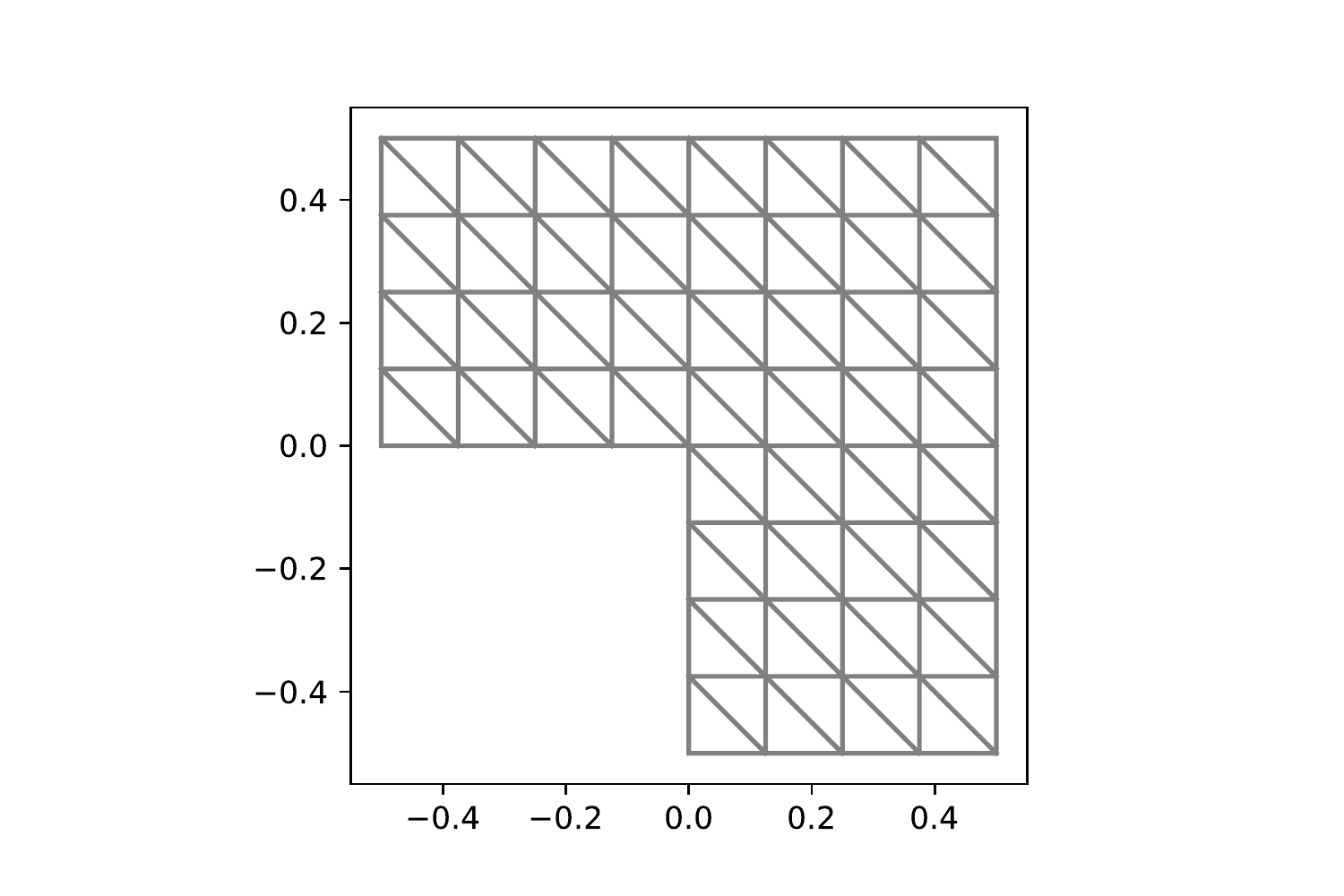} \hspace{-40pt}
\includegraphics[width=5.2cm]{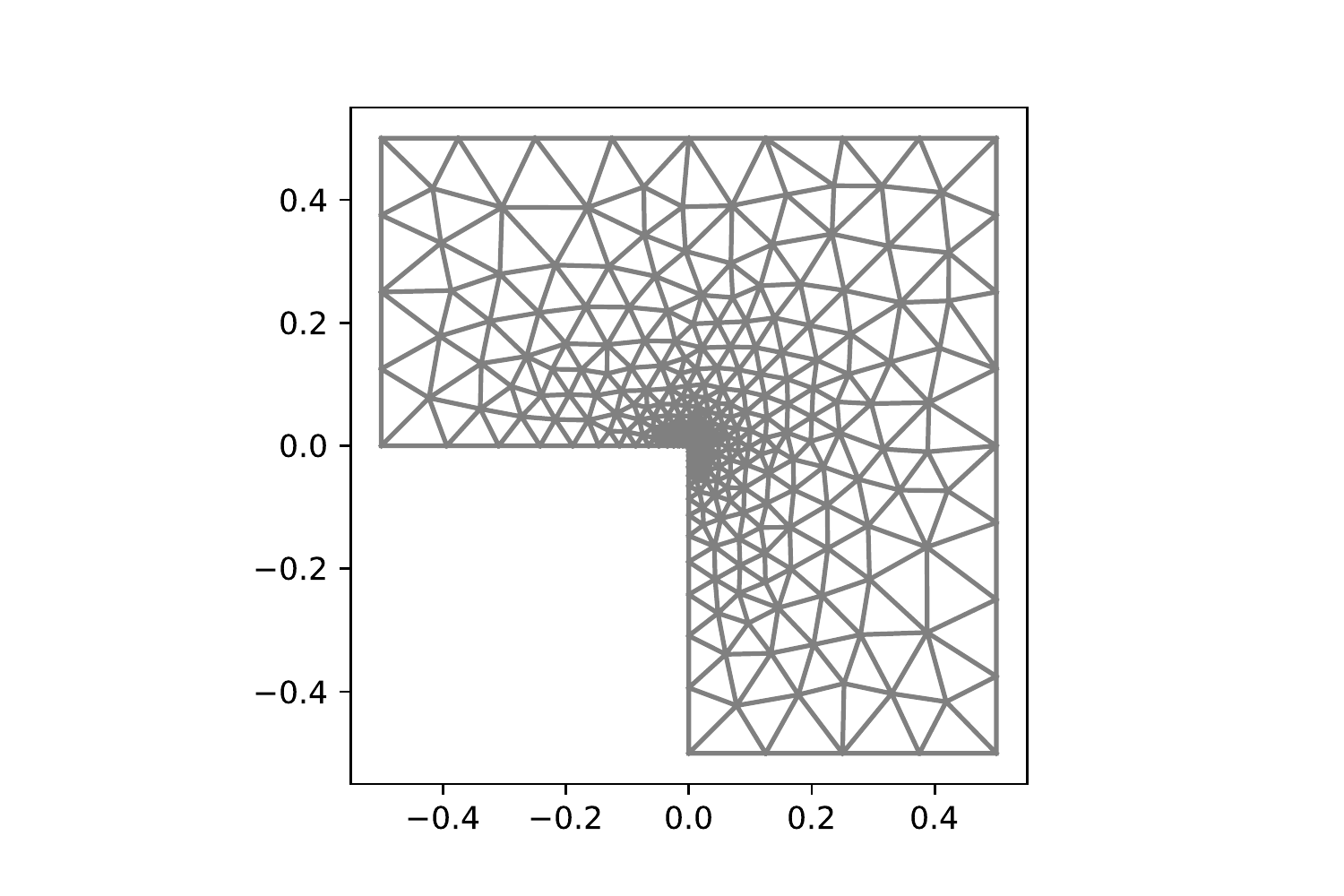}
\caption{Uniform and non-uniform mesh for an L-shaped domain $\Omega$ with two subdomains $S$, $S'$. \label{fig:L-shaped-domain}}
\end{center}
\end{figure}

 The proposed error estimation \eqref{eq:local_est2} is applicable to problems with a singular solution and the even case in which the subdomain $S$ and $\Omega$ share a common part of the boundary.
   In this sub-section, we consider the boundary value problem over an L-shaped domain $\Omega := (-0.5,0.5)^2 \setminus [-0.5,0]^2$; see Figure \ref{fig:L-shaped-domain}.
   The error estimation on two subdomains $S=\Omega \cap  (-0.125,0.125)^2$ and $S'=(0.25,~0.5)^2$ will be considered.

Let $u=r^{\frac{2}{3}} \sin{\left ( \frac{2}{3} (\theta + \frac{\pi}{2})\right )} \cos{(\pi x)}\cos{(\pi y)}$, where
$r$ and $\theta$ are the variables under the polar coordinates. Define $f=-\Delta u$. 
Then $u$ is the solution of the following equation.
\begin{eqnarray}
  - \Delta u =  f \mbox{ in } \Omega,\quad u = 0 \mbox{ on } \partial\Omega .
\label{eq:experiment3}
\end{eqnarray}
It is easy to confirm that $u \notin H^2(\Omega)$ due to the singularity around the re-entry corner point of the domain.

\noindent\textbf{Selection of the bandwidth of $B_{S}$.} The dependency of $\widehat{E}_{L}$ on the bandwidth of the $B_S$ is shown in Figure  \ref{fig:RD-tol-L}. 
 In the following computation, the bandwidth of $B_S, B_{S'}$ is selected as $0.375$, $0.25$, respectively.

\begin{figure}[h!]
\begin{center}
\includegraphics[width=3.5cm,angle=0]{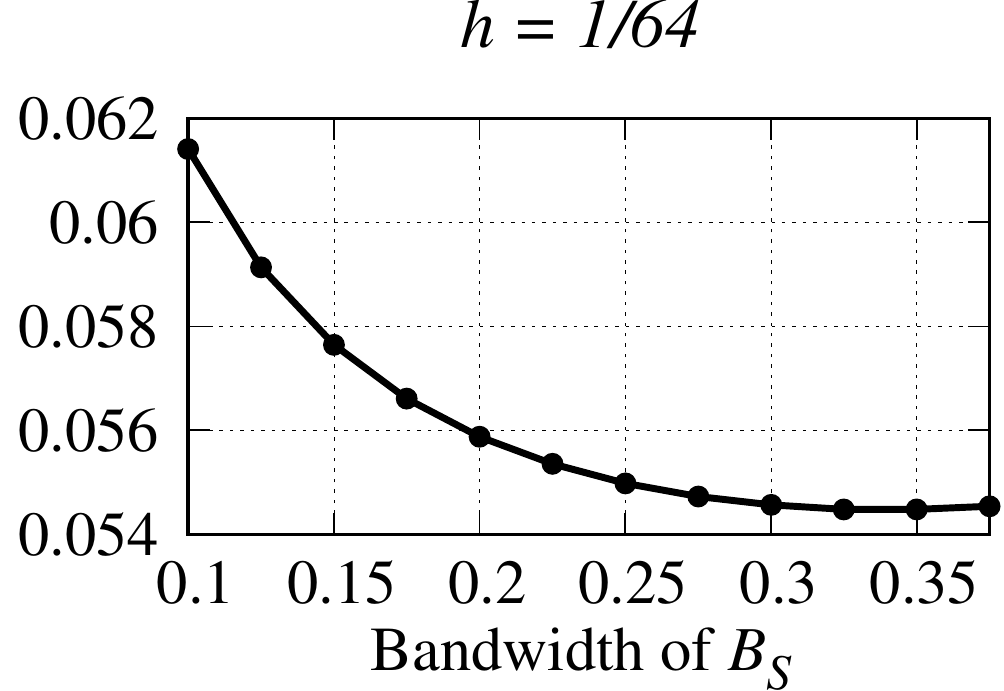} \quad
\includegraphics[width=3.5cm,angle=0]{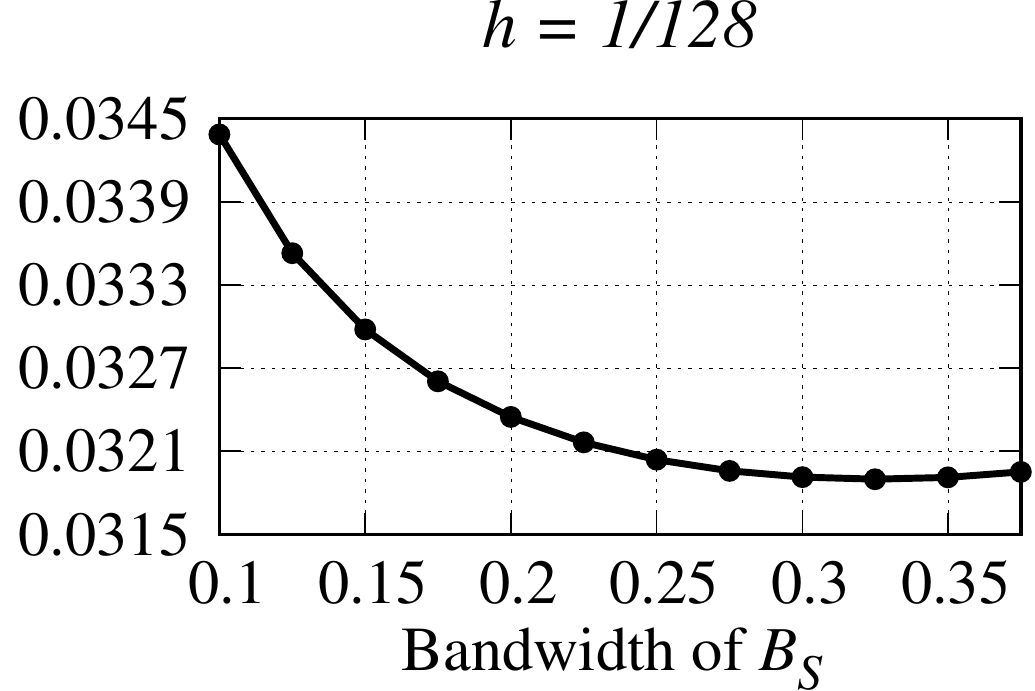}  \\
\includegraphics[width=3.5cm,angle=0]{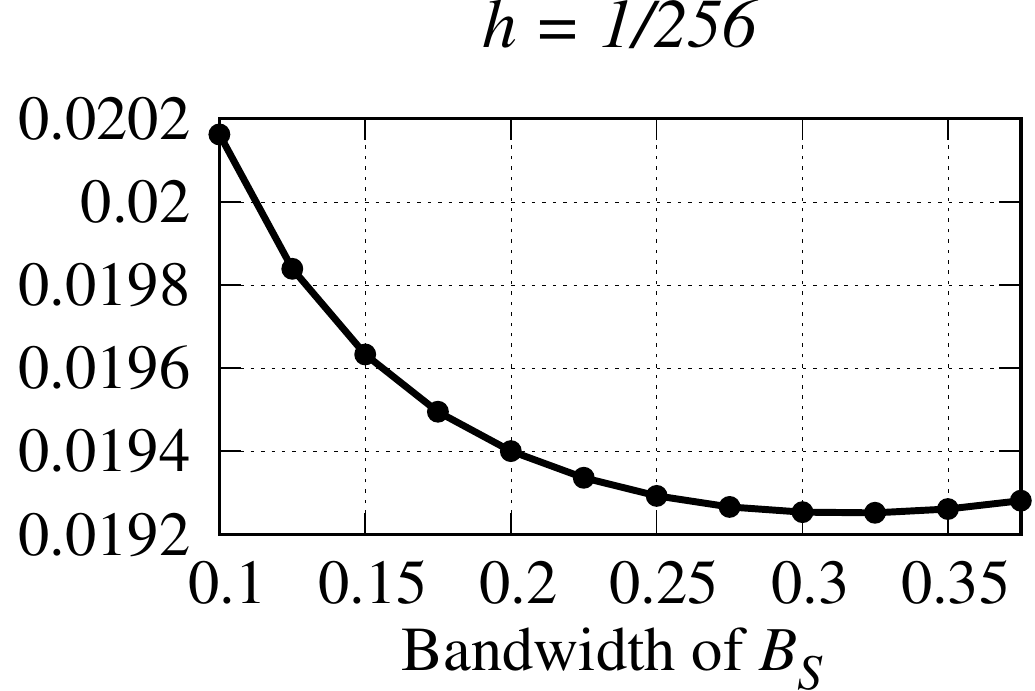}  \quad
\includegraphics[width=3.5cm,angle=0]{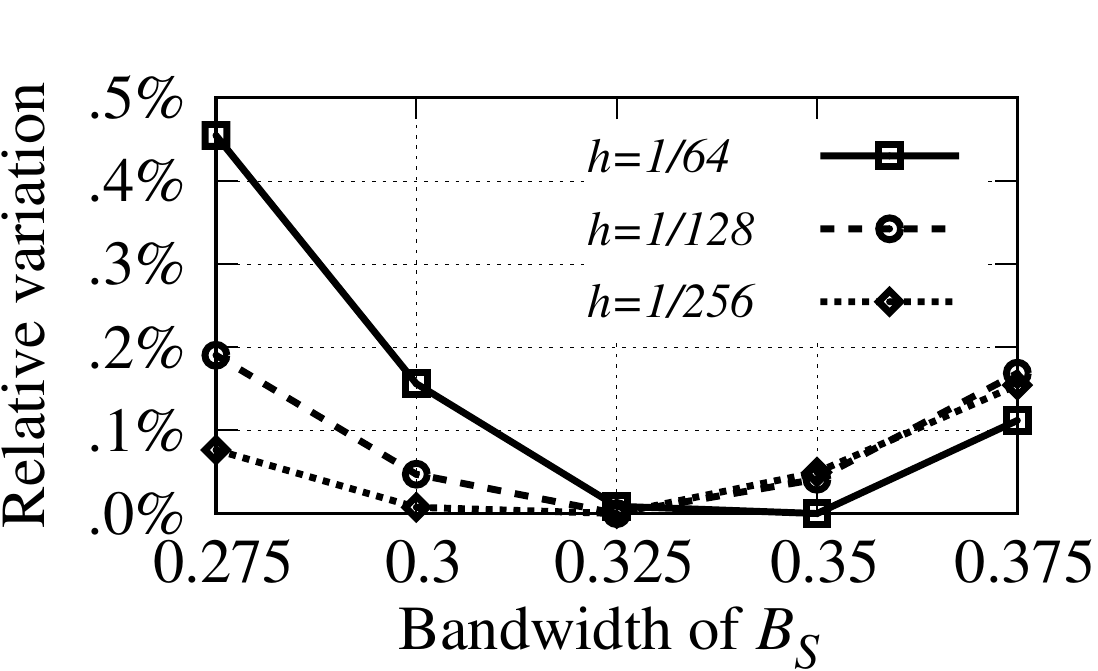}  \\
\vspace{10pt}
\caption{Dependency of local error estimation on the bandwidth of $B_{S}$ (L-shaped domain).}
		\label{fig:RD-tol-L}
	\end{center}
\end{figure}
 For subdomain $S$, the asymptotic behavior of $\widehat{E}_{L}$ with respect to  mesh size $h $ is shown in the Table \ref{table:result_L} and Figure \ref{fig:Errest_L}. 
  The numerical results tell that the local error component $E_1$ in $\widehat{E}_L$ gradually becomes dominant as the mesh is refined. 
\begin{table}[h]
\caption{Error estimators for subdomain $S$ (uniform mesh of L-shaped domain).}
\begin{center}
\begin{tabular}{|c|cc|c|cc|cc|}
\hline
\rule[-0.1cm]{0pt}{0.5cm}{} 
$h$&$\kappa_h$&$C(h)$&$E_{L}$&$E_1$&$E_2$&$\widehat{E}_L$&$\widehat{E}_{G}$\\\hline\hline
      1/16  & 0.046 & 0.050 & 0.080 & 0.139 & 0.108 & 0.192 & 0.172      \\
      1/32  & 0.028 & 0.029 & 0.050 & 0.081 & 0.047 & 0.098 & 0.095      \\
      1/64  & 0.017 & 0.018 & 0.032 & 0.049 & 0.021 & 0.054 & 0.055      \\
      1/128 &0.011 & 0.011 & 0.020 & 0.030 & 0.030  & 0.032 & 0.032  \\
      1/256 & 0.007 & 0.006 & 0.013 & 0.018 & 0.005  & 0.019 & 0.020  \\\hline
\end{tabular}
\label{table:result_L}
\end{center}
\end{table}

\begin{figure}[h]
	\begin{center}
	\includegraphics[angle=0,width=5.6cm]{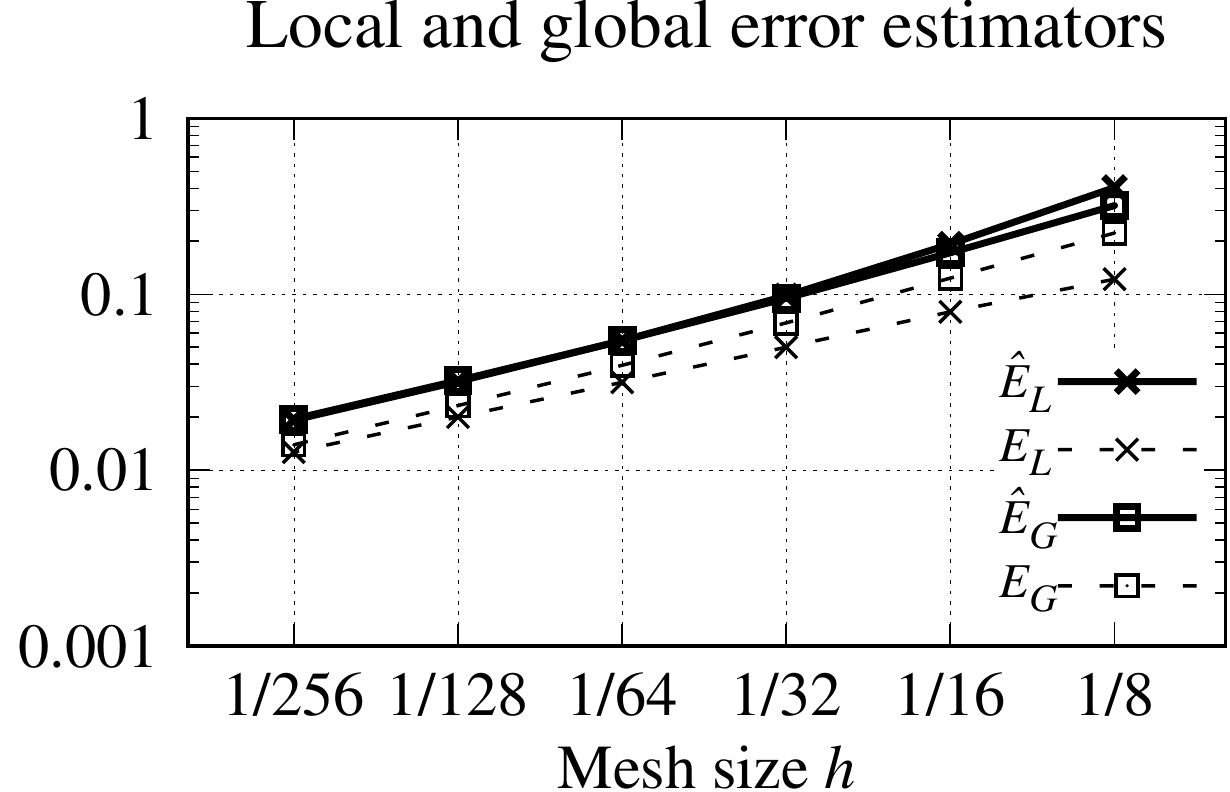} \hspace{5pt}
	\includegraphics[angle=0,width=5.6cm]{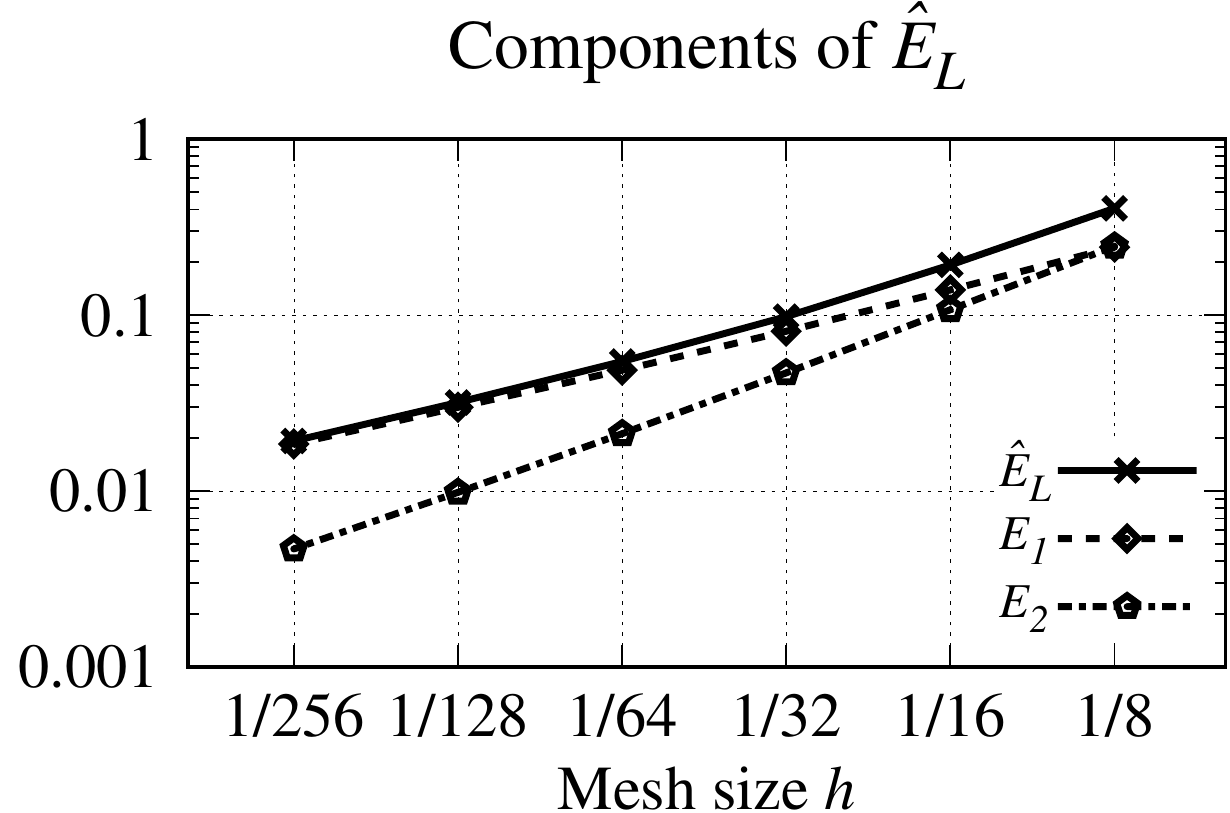} \\
	\vspace{10pt}
	\caption{Error estimators for subdomain $S$ (uniform mesh of L-shaped domain).}
	\label{fig:Errest_L}
	\end{center}
\end{figure}

  We also compare $E_L$, $\widehat{E}_L$ with $E_G$, $\widehat{E}_G$ in Table \ref{table:LGratio-S}. Denote 
  $\beta:=E_L/E_G$ and $\hat{\beta}:=\widehat{E}_L$/$\widehat{E}_G$. 
  It is observed that the approximation error concentrates in the subdomain $S$ around the re-entry corner as the mesh is refined. For $h = 1/256$, the local error in $S$ is about $91\%$ of the global error in the whole domain.
\begin{table}[h]
\caption{Comparison of the estimated local error on $S$ and $S'$.
\label{table:comparision-two-subdomains}}
\begin{center}
\begin{tabular}{c|cc|cc|cc|}
\cline{2-7} 
\rule[-0.05cm]{0pt}{0.4cm}{} 
		&\multicolumn{2}{|c|}{subdomain $S$} &\multicolumn{4}{|c|}{subdomain $S'$}  \\\hline
\multicolumn{1}{|c|}{\rule[-0.05cm]{0pt}{0.4cm}{} $h$}   &$\beta$(\%) &$\hat{\beta}$(\%) &$E_L$&$\widehat{E}_L$&$\beta$(\%) &$\hat{\beta}$(\%)  \\\hline\hline        
\multicolumn{1}{|c|}{1/16}    & 64 & 112   & 0.041 & 0.160 &	33  &	93            \\
\multicolumn{1}{|c|}{1/32}    & 73 & 103  &  0.021 & 0.069 &	30 & 72         \\
\multicolumn{1}{|c|}{1/64}     & 80 & 100   &0.010 & 0.031 &	26 &	57         \\
\multicolumn{1}{|c|}{1/128}   & 86 & 99    &	0.005 & 0.014 &	22 &	45           \\
\multicolumn{1}{|c|}{1/256}    & 91 & 99  & 0.003 & 0.007  &	18&	35       \\\hline
\end{tabular}
\label{table:LGratio-S}
\end{center}
\end{table}
Finally, we consider the local error estimation for a non-uniform mesh; see computation results Table \ref{table:result_L_nonuni} and Figure \ref{fig:Errest_L_nonuni}.
It is observed that for the subdomain $S$, both $\beta$ and $\hat{\beta}$ become smaller compared to the results in the case of uniform meshes, which implies that a denser mesh around the re-entry corner improves the quality of local approximation.

\begin{table}[h]
\caption{Error estimators for subdomain $S$ (non-uniform mesh of L-shaped domain)}
\begin{center}
\begin{tabular}{|c|cc|c|cc|cc|cc|}
\hline
\rule[-0.1cm]{0pt}{0.4cm}{} 
$h$&$\kappa_h$&$C(h)$&$E_{L}$&$E_1$&$E_2$&$\widehat{E}_L$&$\widehat{E}_{G}$&$\beta$(\%) &$\hat{\beta}$(\%) \\\hline\hline
      0.141    & 0.039 & 0.054   & 0.023   & 0.083 & 0.106 & 0.168   & 0.166   &	21 &	101        \\
      0.081  & 0.021 & 0.030  & 0.012 & 0.039 & 0.040 & 0.067 & 0.081   &	21 &	82       \\
      0.041  & 0.011 & 0.015  & 0.007 & 0.019 & 0.015 & 0.027 & 0.040    &	23 &	67      \\
      0.020  & 0.006 & 0.008 & 0.004 & 0.010 & 0.005 & 0.012  & 0.020   &	46 &	60        \\
      0.010 &0.003 & 0.004  & 0.002 & 0.005 & 0.002 & 0.006 & 0.010   &	30 &	57   \\\hline
\end{tabular}
\label{table:result_L_nonuni}
\end{center}
\end{table}
\begin{figure}[h]
	\begin{center}
	\includegraphics[angle=0,width=5.6cm]{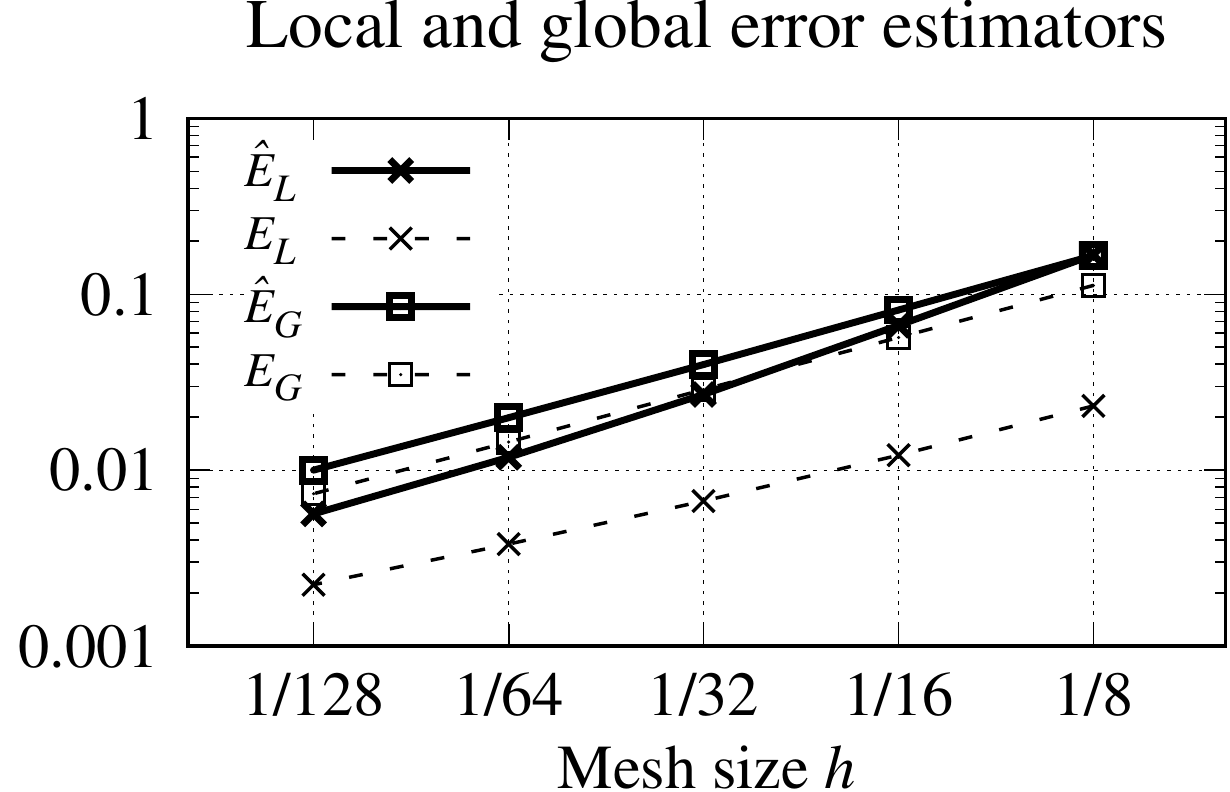} \hspace{5pt}
	\includegraphics[angle=0,width=5.6cm]{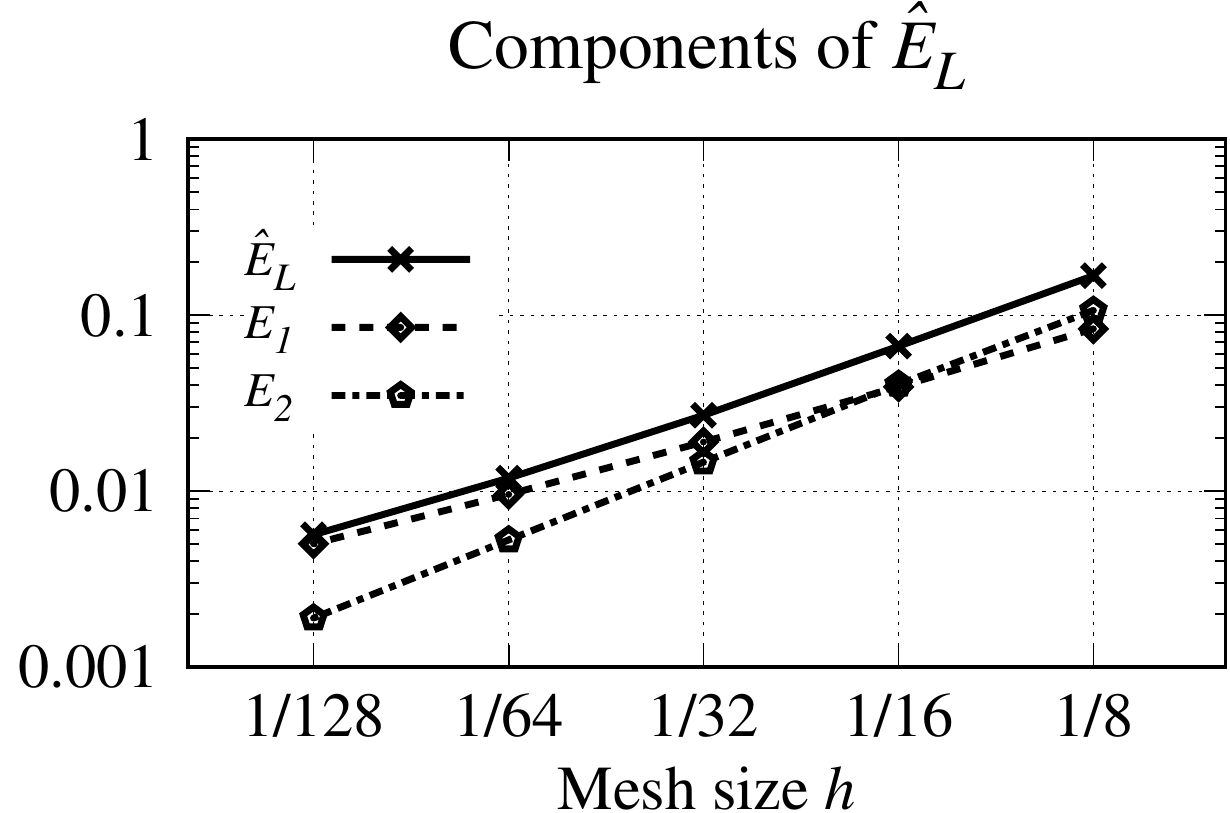} \\\vspace{20pt}
	\caption{Error estimators for subdomain $S$  (non-uniform mesh of L-shaped domain).}
	\label{fig:Errest_L_nonuni}
	\end{center}
\end{figure}

\section{Conclusion and future work}
\label{sec:7}

In this study, we proposed an explicit local {\em a posterior} error estimation for the finite element solutions and performed numerical experiments on the boundary value problem   of the Poisson equation, defined on the square and L-shaped domains.
The numerical results show that the proposed method provides an efficient estimation of the local error, especially compared to the general overestimated global error estimation.

In future, we will further apply the local error estimation to the four-probe method used in resistivity measurement. Another promising approach for the explicit point-wise error estimation needed by the four-probe method is to apply the idea of \cite{Fujita-1955} and the hypercircle method to the finite element method.


   \bibliographystyle{siamplain}
   \bibliography{MyBibliography.bib}

  \end{document}